\def\R{\mathbb{R}}
\def\C{\mathbb{C}}
\def\P{\mathbb{P}}
\def\E{\mathbb{E}}
\def\Q{\mathbb{Q}}
\def\N{\mathbb{N}}
\def\Z{\mathbb{Z}}
\def\ALG{\mathbb{A}}
\renewcommand{\Re}{\mathrm{Re}}
\newcommand{\ga}{\mathbb{\alpha}}
\DeclareMathOperator{\Ber}{\mathrm{Ber}}
\newtheorem{observation}{Observation}[section]
\newtheorem{theorem}{Theorem}[section]
\newtheorem{proposition}[theorem]{Proposition}
\newtheorem{lemma}[theorem]{Lemma}
\newtheorem{claim}[theorem]{Claim}
\def\eps{\varepsilon}
\newcommand{\atom}{p_{\max}}
\newcommand{\kr}{\delta}
\newcommand{\al}{\alpha}
\newcommand{\dv}{d}
\newcommand{\ep}{\varepsilon}
\newcommand{\Pro}{\mathbb{P}}
\newcommand{\ber}{B}
\newcommand{\ind}{1{\hskip -2.5 pt}\hbox{I}}
\title{Double roots of random polynomials with integer coefficients}
\author{Ohad N. Feldheim}
\address{Ohad N. Feldheim\hfill\break
    Stanford University\\
    Department of Mathematics\\
    Stanford, California, 94305, USA.}
\thanks{Research of O.N.F. supported in part by the Institute for Mathematics and its Applications with funds provided by the National Science Foundation.}
\author{Arnab Sen}
\address{Arnab Sen\hfill\break
    University of Minnesota\\
    School of Mathematics\\
    206 Church St SE, Minneapolis, MN 55455, USA.}
\thanks{Research of A.S. supported in part by the American National Science Foundation grant, DMS-1406247}
\begin{document}

\maketitle

\begin{abstract}
We consider random polynomials whose coefficients are independent and
identically distributed on the integers. We prove that if the coefficient distribution has bounded support and its probability to take any particular value is at most
$\tfrac12$, then the probability of the polynomial to have a double root is dominated by the probability that either $0$, $1$, or $-1$ is a double root up to an error of $o(n^{-2})$. We also show that if the support of coefficient distribution excludes $0$ then the double root probability is $O(n^{-2})$. Our result generalizes a similar result of
Peled, Sen and Zeitouni \cite{PSZ} for Littlewood polynomials.
\end{abstract}

\section{Introduction}
%

Let $n\in\N$ and let $(\xi_j)_{0 \le j}$ be a sequence of independent and identically distributed
random variables taking values in $\Z$.
Define the random polynomial $P=P_n$ by
\begin{equation}\label{eq:poly_def}
  P(z):=\sum_{j=0}^n \xi_j z^j.
\end{equation}
In a previous paper, Peled, Sen and Zeitouni~\cite{PSZ} showed that if the random variables are supported on $\{-1, 0, +1\}$  with   $\max_{x\in\{-1,0,1\}} \P(\xi_0 = x) < \tfrac{1}{\sqrt{3}}$, then the probability of  $P$ to have a double root in the complex plane is same as having a double root at $0, \pm 1$ up to an error of $o(n^{-2})$.  In this paper, we extend the result for more general integer-valued random variables.  Our main result is the following.
\begin{theorem}\label{thm:main}
Suppose the coefficient distribution satisfies the following conditions.
\begin{equation}\label{eq:bd_support}
\text{ There exists constant } M \ge 1 \text{ such that } \P( |\xi_0| \le M) =1.
\end{equation}
\begin{equation}\label{eq:coefficient_condition}
\max_{x\in\Z} \P\Big(\xi_0 = x\Big) \le \frac{1}{2}.
\end{equation}
Then we have
  \begin{equation}\label{eq:thm_conclusion}
    \Pro\Big(P\text{ has a double root}\Big) = \Pro\Big(P\text{ has a double root at either } 0, -1 \text{ or } 1 \Big)   +  o(n^{-2})\quad\text{as $n\to\infty$.}
  \end{equation}
  Moreover, if $\P(\xi_0=0)=0$, then  $\Pro\Big(P\text{ has a double root}\Big)  = O(n^{-2}).$
\end{theorem}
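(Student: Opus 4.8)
The backbone of the proof is the algebraic reduction
\[
  P\text{ has a double root}\iff\exists\,f\in\Z[z]\text{ primitive irreducible},\ \deg f\ge 1,\ f^2\mid P\text{ in }\Z[z].
\]
Indeed, if $\alpha$ is a double root of $P$ and $f$ is its primitive minimal polynomial, then $f\mid P$ and $f\mid P'$ by Gauss's lemma, and since $\gcd(f,f')=1$ in characteristic zero this forces $f\mid (P/f)$, i.e.\ $f^2\mid P$; the converse is immediate. Taking $f\in\{z,\,z-1,\,z+1\}$ produces exactly the three events on the right of \eqref{eq:thm_conclusion}, and the left side of \eqref{eq:thm_conclusion} exceeds the right side by at most $\sum_f\P(f^2\mid P)$, summed over all \emph{other} primitive irreducible $f$ with $\deg f\ge 1$; so it suffices to show this sum is $o(n^{-2})$. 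Peeling leading and trailing zero coefficients (using $\P(\xi_0=\cdots=\xi_k=0)\le 2^{-k-1}$ from \eqref{eq:coefficient_condition}, and noting a double root at $0$ requires $\xi_0=\xi_1=0$, already on the right of \eqref{eq:thm_conclusion}) reduces matters to $\xi_0\xi_n\neq0$, so all roots of every relevant $f$ lie in a fixed annulus (Cauchy's bound for $P$). Then Landau's inequality gives $M(f)^2\le M(P)\le\|P\|_2\le\sqrt{n+1}\,M$, hence $M(f)\le\big((n+1)M^2\big)^{1/4}$; and a non-monic such $f$ has leading coefficient of absolute value $\le\sqrt M$, since its square divides $\xi_n$.

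For the cyclotomic polynomials $f=\Phi_k$ with $k\ge3$ (roots primitive $k$-th roots of unity, none $\pm1$), the event $\{\Phi_k^2\mid P\}$ equals $\{P(\zeta_k)=0,\ P'(\zeta_k)=0\}$, i.e.\ $\sum_j\xi_j\zeta_k^j=0$ and $\sum_j j\xi_j\zeta_k^{j-1}=0$ in $\Z[\zeta_k]\cong\Z^{\phi(k)}$. A joint local central limit theorem for the $\Z^{2\phi(k)}$-valued walk $\big(\sum_j\xi_j\zeta_k^j,\ \sum_j j\xi_j\zeta_k^{j-1}\big)$ --- whose covariance has one block of order $n$, one of order $n^3$, and (by non-degeneracy of the step) determinant of order $n^{4\phi(k)}$ --- gives $\P(\Phi_k^2\mid P)=O\big(n^{-2\phi(k)}\big)$ uniformly in $k$. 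Since for each $v$ at most $C^v$ values of $k$ satisfy $\phi(k)=v$, summation yields $\sum_{k\ge3}\P(\Phi_k^2\mid P)\le\sum_{v\ge2}C^v C' n^{-2v}=O(n^{-4})=o(n^{-2})$.

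The heart of the matter is the non-cyclotomic case. By Kronecker's theorem a monic irreducible with all roots on the unit circle is cyclotomic, and a non-monic primitive irreducible $f\neq z$ has $M(f)\ge2$; in either case $f$ has a root off the circle, and by passing to the reciprocal polynomial $f^*(z)=z^{\deg f}f(1/z)$ and using that $P^*(z)=z^nP(1/z)$ has the same law as $P$, I may assume $f$ has a root $\alpha$ strictly inside the disk. I would bound $\P(f^2\mid P)$ by combining two mechanisms. \emph{Rigidity from roots off the circle:} for a root $\beta$ of $f$ with $|\beta|\neq1$, the conditions $P(\beta)=P'(\beta)=0$ read modulo $f$ in $\Z[z]$, together with the requirement --- forced by \eqref{eq:bd_support} --- that the growing mode of the associated linear recurrence be suppressed, pin $\sum_j\xi_j\beta^j$ to an exceedingly fine lattice/valuation constraint which a Littlewood--Offord / base-$\beta$ counting argument shows holds with probability at most $\rho_f^n$ for some $\rho_f<1$, exponentially small once $\beta$ is bounded away from the circle. \emph{Concentration from roots near the circle:} since $M(f)\le\big((n+1)M^2\big)^{1/4}$, if $\deg f=d$ then all but $O(\log n)$ roots of $f$ lie within $o(1)$ of the unit circle, and grouping conjugates each such root contributes an $n^{-2}$-type factor as in the cyclotomic case, so $\P(f^2\mid P)\le n^{-(d-O(\log n))}$. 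Feeding these bounds into a Northcott/Chern--Vaaler count --- the number of $f$ of degree $d$ with $M(f)\le\big((n+1)M^2\big)^{1/4}$ is $O\big(c_d\,n^{(d+1)/4}\big)$ --- and summing over $d\ge2$ should give $\sum_{f\text{ non-cyclotomic}}\P(f^2\mid P)=o(n^{-2})$. The hard part, and where \eqref{eq:coefficient_condition} is genuinely needed, is making the rigidity mechanism quantitative and \emph{uniform} for the low-degree non-cyclotomic $f$ whose off-circle roots sit close to the unit circle (Lehmer-type configurations), where neither mechanism is individually strong; there one must exploit the exact integrality of the $\xi_j$ together with the anti-concentration consequences of $\max_x\P(\xi_0=x)\le\tfrac12$ (for instance $\P(\xi_0=\xi_0')\le\tfrac12$ after symmetrization, which drives the relevant Hal\'{a}sz-type estimates).

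Finally, for the addendum: if $\P(\xi_0=0)=0$ then $P(0)=\xi_0\neq0$ almost surely, so $P$ has no double root at $0$ and the right side of \eqref{eq:thm_conclusion} reduces (up to an inclusion--exclusion correction) to $\P\big((z-1)^2\mid P\big)+\P\big((z+1)^2\mid P\big)$. A two-dimensional local central limit theorem for $\big(\sum_j(\pm1)^j\xi_j,\ \sum_j j(\pm1)^j\xi_j\big)$ --- covariance of order $\mathrm{diag}(n,n^3)$ with off-diagonal of order $n^2$, hence determinant of order $n^4$ --- shows each of $\P\big((z\pm1)^2\mid P\big)$ is $O(n^{-2})$, and combined with \eqref{eq:thm_conclusion} this yields $\P(P\text{ has a double root})=O(n^{-2})$.
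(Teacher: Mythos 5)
Your reduction to $\sum_f\P(f^2\mid P)$ over primitive irreducible $f$, the Landau/leading-coefficient bounds, and the local-CLT treatment of $(z\pm1)^2$ in the addendum are all sound and broadly consistent with how the paper begins. But the proposal has genuine gaps at exactly the points where the theorem is hard. First, you explicitly leave open the low-degree non-cyclotomic case (``the hard part \dots is making the rigidity mechanism quantitative and uniform''), which is not a sketchable detail but the core of the argument: the paper resolves it with the Tao--Vu inverse Littlewood--Offord theorem (giving $\P(P(\alpha)=0)=O(n^{-5/2+\eps})$ for every $\alpha$ of degree $\ge 5$), a counting theorem of Dubickas showing there are only $o(n^{\eps})$ low-degree integer polynomials with house $\le 1+C/\log n$, and Hal\'asz's inequality for unimodular $\alpha$ of degree $\le 4$ that are not roots of unity --- none of which your ``rigidity/valuation'' mechanism substitutes for. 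Second, your quantitative claim that each near-circle root ``contributes an $n^{-2}$-type factor,'' yielding $\P(f^2\mid P)\le n^{-(d-O(\log n))}$, is unjustified: the events $P(\beta)=0$ at conjugate roots $\beta$ of $f$ are all implied by the single event $f\mid P$, so no independence is available, and obtaining a product bound over conjugates is precisely what is difficult. The paper proves such a factorization only for roots of unity (its Lemma on $P'(\alpha)=0$), using the rational linear independence of $1,\alpha,\dots,\alpha^{\deg\alpha-1}$ and the arithmetic of residues mod $k$; for general $f$ it does something entirely different.

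Because that per-polynomial bound fails, your Chern--Vaaler count cannot close the high-degree regime either: the number of degree-$d$ integer polynomials with $M(f)\le((n+1)M^2)^{1/4}$ is of order $n^{(d+1)/4}$, which already for $d\approx 9$ overwhelms any $n^{-5/2+\eps}$-type per-polynomial estimate, and for $d$ growing you have no proved estimate to pair it with. The paper avoids the union bound over high-degree $f$ altogether: if $f^2\mid P$ then $f(\pm2)^2\mid P(\pm2)$, Jensen's formula forces $|f(2)f(-2)|\ge c\,e^{cd}n^{-C}$, and the anti-concentration bound $\max_m\P(P(\pm2)=m)\le 2^{-n(1/2+\eps)}$ --- itself a substantial theorem proved via a decomposition of any law with max-atom $\le\tfrac12$ into a mixture of two-point measures --- bounds the probability that $P(\pm2)$ is divisible by a large square. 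This is where the hypothesis $\max_x\P(\xi_0=x)\le\tfrac12$ genuinely enters, and nothing in your outline plays this role. Finally, your cyclotomic estimate $\P(\Phi_k^2\mid P)=O(n^{-2\phi(k)})$ ``uniformly in $k$'' via a $2\phi(k)$-dimensional local CLT is asserted without proof; uniformity over $k$ growing with $n$ (where each residue class mod $k$ contains few indices) is not standard, and the paper instead routes large-$k$ roots of unity through the degree-$\ge5$ machinery and keeps an explicit $k\,e^{-c\lfloor n/k\rfloor}$ correction for the rest.
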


We make a few remarks  about the above theorem.
\begin{enumerate}
\item When $\P(\xi_0=0)=0$,   the upper bound in Theorem~\ref{thm:main} is sharp. When $\xi_i$'s are i.i.d.\ $\pm 1$ symmetric Bernoulli and $(n+1)$ is divisible by $4$, then  it was shown in \cite{PSZ} that the probability of having a double root is  $\Theta(n^{-2})$.

\item We can have a  better error bound if we allow the possibility of having double roots other low-degree roots of unity. More precisely,
our proof can be modified to show that for any fixed $d \ge 1$,
\begin{align*}
 & \hspace{2cm} \Pro\Big(P\text{ has a double root}\Big) \\
 &= \Pro\Big(P\text{ has a double root at } 0 \text{ or some roots of unity of degree at most } d \Big)   +  o(n^{-2d}).
 \end{align*}
\item The bounded support condition~\eqref{eq:bd_support} can be weaken with minor modifications of our arguments. We did not pursue that here for sake of simplicity. On the other hand, we do not know how to relax condition~\eqref{eq:coefficient_condition} on the size of the maximum of atom and it seems that the current bound $\tfrac12$ is a limitation of our proof. In fact, we believe that both conditions are unnecessary and that the result \eqref{eq:thm_conclusion} should hold for any non-degenerate integer-valued coefficient distribution.

\item Even though some parts of our proof closely follow the lines of arguments from the paper of Peled, Sen and Zeitouni~\cite{PSZ}, extending the result to general integer-valued coefficients, however,  poses a few significant challenges. For example, to handle high-degree double roots, we need a key anti-concentration estimate for $P(\pm 2)$  given in the form of Theorem~\ref{thm: main}. When the coefficients are $\pm 1$-valued, the map $(a_0, \ldots, a_n) \mapsto \sum_{i=1} a_i 2^i : \{-1, 1\}^{n+1} \to \Z$ is one-to-one, which immediately implies the bound that
 \[  \P( P(\pm 2) = m) \le \big(\max_{x = \pm 1} \P( \xi_0 = x) \big)^{n+1}.\]
 The paper \cite{PSZ} made use of the above simple observation. But for more general integer-valued coefficients, we lose such one-to-one property, which makes  proving Theorem~\ref{thm: main} nontrivial. One consequence of this difficulty  is that the result here requires the maximal atom of the coefficient distribution to be at most $\tfrac12$ while in \cite{PSZ} atoms up to $\tfrac{1}{\sqrt{3}}$ could be handled.

Moreover, the argument used in \cite{PSZ} to deal with low degree roots does not carry over either.  In \cite{PSZ}, $P$ was always a monic polynomial and hence its roots were algebraic integers. For algebraic integers, one can use some partial result (see, e.g., Dobrowolski
 \cite{D79})
 on  Lehmer's conjecture to show that any non-zero algebraic integer is either a root of unity or has a conjugate which is a bit far (depending on its degree) away from unit circle. In low degree case, \cite{PSZ} made use of this dichotomy of algebraic integers. In contrast, in our case we also have to deal with non-monic $P$, so its roots are algebraic numbers in general. Such dichotomy is not available for algebraic numbers. For example, there are algebraic numbers which are not a root of unity and all of its conjugates lie on the unit circle. This requires new methods for handling such roots which are given in sections~\ref{sec:roots_near_unit} and~\ref{sec:tiny_deg}.

\end{enumerate}

A key instrument in the proof of the theorem, which may be of independent interest,  is the following anti-concentration bound.

\begin{theorem}\label{thm: main}
Under condition \eqref{eq:coefficient_condition} there exists $\ep>0$ such that for all $n \in\N$ large enough we have
\[
\max_{m \in \Z} \Pro\Big(P(\pm2)=m\Big)\le 2^{-n(\frac12+\ep)}.
\]
\end{theorem}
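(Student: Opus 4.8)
The plan is to reduce everything to a single exponential‑decay estimate for an integral of a product of characteristic functions along the orbit of the doubling map, and to pinpoint the constant $\tfrac12$ in one use of Jensen's inequality.

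\emph{Reductions and the Fourier bound.} Replacing $\xi_j$ by $(-1)^j\xi_j$, which does not change condition \eqref{eq:coefficient_condition}, turns $P(-2)$ into $\sum_{j}\xi_j2^j$, so it suffices to bound $\max_m\Pro(X=m)$ for $X=\sum_{j=0}^{n}\xi_j2^j$. Let $f(\theta)=\E\,e^{2\pi i\theta\xi_0}=\sum_{k\in\Z}p_k e^{2\pi i k\theta}$ with $p_k=\Pro(\xi_0=k)$; by \eqref{eq:coefficient_condition} the variable $\xi_0$ is non-degenerate, so $|f|$ is a non-constant function on the circle. From $\Pro(X=m)=\int_0^1 e^{-2\pi i\theta m}\,\E\,e^{2\pi i\theta X}\,d\theta$ and independence,
\[
\max_{m\in\Z}\Pro(X=m)\ \le\ I_n:=\int_0^1\prod_{j=0}^{n}\bigl|f(2^j\theta)\bigr|\,d\theta ,
\]
so the theorem follows once we prove $\limsup_n I_n^{1/n}<2^{-1/2}$.

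\emph{The key lemma, where $\tfrac12$ enters.} Up to a unimodular monomial factor $f(e^{2\pi i\theta})$ equals an ordinary polynomial $g(z)$ with non-negative coefficients, so $|f|=|g|$ on the unit circle and $\Lambda:=\exp\!\bigl(\int_0^1\log|f(\theta)|\,d\theta\bigr)$ is the Mahler measure of $g$ (finite or zero; in any case $\log|f|$ has at worst integrable logarithmic singularities). By Jensen's inequality (concavity of $\log$),
\[
\Lambda^2=\exp\!\Bigl(\int_0^1\log|f(\theta)|^2\,d\theta\Bigr)\ \le\ \int_0^1|f(\theta)|^2\,d\theta\ =\ \sum_{k}p_k^2\ \le\ \Bigl(\max_k p_k\Bigr)\!\sum_k p_k\ \le\ \tfrac12,
\]
and the first inequality is strict because $|f|^2$ is non-constant. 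Hence $\Lambda<2^{-1/2}$; this is the only place the hypothesis $\max_k p_k\le\tfrac12$ is used.

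\emph{From the geometric mean to the integral.} It remains to show $I_n\le C\rho^{\,n}$ for some $\rho<2^{-1/2}$. By Birkhoff's theorem for the Lebesgue-mixing doubling map, $\tfrac1n\sum_{j<n}\log|f(2^j\theta)|\to\log\Lambda$ for a.e.\ $\theta$, so the ``generic'' contribution to $I_n$ decays at rate $\Lambda$. To make this quantitative I would split $[0,1)$ according to the orbit $(2^j\theta)_{j\le n}$: on the set where the empirical average of the non-negative observable $-\log|f|$ stays within $\eta$ of its mean $-\log\Lambda$, the integrand is at most $(\Lambda+o(1))^n$; the complement is controlled by a lower large-deviation estimate for these ergodic sums (the relevant moment generating functions $\int_0^1\prod_{j<n}|f(2^j\theta)|^{s}\,d\theta$ are bounded for $s>0$, which is what makes the upper tail of the product tractable). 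The integral can decay slower than $\Lambda^n$ only through $\theta$ whose orbit lingers near the finite set $Z=\{\theta:|f(\theta)|=1\}=\{a/d:0\le a<d\}$, $d=\gcd\{k-k':p_k,p_{k'}>0\}$; but $|f|<1$ off any fixed neighbourhood of $Z$, and the set of $\theta$ whose orbit stays $\varepsilon$-close to $Z$ for its first $k$ steps has measure $O_\varepsilon(2^{-k})$, so this contributes at most $\sum_{k\le n}2^{-k}(\Lambda+o(1))^{\,n-k}$, of exponential rate $\max(\tfrac12,\Lambda)$. Summing the $O(d)$ local pieces gives $I_n\le C(\max(\tfrac12,\Lambda)+\eta)^{n}$, and letting $\eta\to0$ yields $\rho$ as close as desired to $\max(\tfrac12,\Lambda)<2^{-1/2}$. (Under the bounded-support hypothesis one may alternatively pass from large $n$ to all $n$ via the submultiplicativity $\alpha_{n_1+n_2}\le(2M+1)\alpha_{n_1}\alpha_{n_2}$ of $\alpha_n:=\max_m\Pro(\sum_{j<n}\xi_j2^j=m)$, obtained by conditioning on an initial block and using that base-$2^{n_1}$ carries are bounded in terms of $M$.)

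The Jensen-type lemma and the reductions are routine; the technical heart is the last step: upgrading the a.e.\ ergodic statement to the exponential bound $I_n\le C\rho^{\,n}$, i.e.\ the lower large-deviation estimate for $\sum_{j<n}\log|f(2^j\theta)|$ under the doubling map together with the careful local analysis near the finitely many points where $|f|=1$ (equivalently, uniform control of the iterates of the transfer operator with weight $|f|$, which is delicate because $|f|$ can vanish and attains its maximum $1$ at the fixed point $0$). This is also the step one would need to revisit in order to weaken \eqref{eq:coefficient_condition}, since the bound $\Lambda<2^{-1/2}$ degrades as $\max_k p_k$ increases beyond $\tfrac12$.
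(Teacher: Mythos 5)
Your Fourier reduction and the Jensen--Parseval step are correct: $\max_m\Pro(X=m)\le I_n:=\int_0^1\prod_{j=0}^n|f(2^j\theta)|\,d\theta$, and $\Lambda^2\le\int_0^1|f|^2=\sum_kp_k^2\le\max_kp_k\le\tfrac12$ with strict inequality by non-degeneracy. But the technical heart of your argument --- the claim that $I_n\le C\bigl(\max(\tfrac12,\Lambda)+\eta\bigr)^n$ for every $\eta>0$ --- is false, so the theorem cannot be reached along this route as written. Take $\xi_0$ uniform on $\{0,1,2\}$, so $p_k=\tfrac13$ and $f(\theta)=\tfrac13(1+e^{2\pi i\theta}+e^{4\pi i\theta})$ has Mahler measure $\Lambda=\tfrac13$ (the polynomial $1+z+z^2$ is cyclotomic). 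The number of representations of an integer $m$ as $\sum_{j\le n}a_j2^j$ with $a_j\in\{0,1,2\}$ is given by Stern's diatomic sequence, whose maximum over $m<2^{n+1}$ is a Fibonacci number; hence $\max_m\Pro(X=m)=\Theta\bigl((\varphi_0/3)^n\bigr)$ with $\varphi_0=(1+\sqrt5)/2$, and therefore $I_n\ge c\,(0.539\ldots)^n$, which eventually exceeds $C(\tfrac12+\eta)^n$ for any $\eta<0.039$. (The theorem itself survives in this example only because $0.539<2^{-1/2}\approx0.707$.)

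The heuristic fails because the exceptional set for the Birkhoff average is not confined to orbits lingering near the finite set $Z=\{\theta:|f(\theta)|=1\}$. The exponential rate of $I_n$ is governed by the topological pressure $\sup_\mu\{h(\mu)+\int\log|f|\,d\mu\}$ over \emph{all} invariant measures of the doubling map: Lebesgue measure contributes $\log2+\log\Lambda$ and the atomic measures on $Z$ contribute $0$, but positive-entropy measures concentrating where $|f|$ is merely close to $1$ can, and in the example above do, beat both. Your appeal to the boundedness of $\int_0^1\prod_j|f(2^j\theta)|^s\,d\theta$ is circular, since bounding that quantity is the same pressure problem. To salvage the approach you would need $P(\log|f|)\le(\tfrac12-\eps)\log2$ for every law with $\max_kp_k\le\tfrac12$, and it is unclear how the hypothesis enters such a bound. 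The paper avoids all of this: it decomposes the law of $\xi_0$ as a mixture of two-point distributions (Proposition~\ref{propos:mixture-reduction}), conditions on the mixture components, and reduces the anti-concentration to the combinatorial fact that the values $i+L(\Delta_i)$ (with $L$ the $2$-adic valuation) take more than $(\tfrac12+\eps)n$ distinct values with overwhelming probability, which is proved by a self-bounding concentration inequality.
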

%

We proceed as follows. In Section~\ref{subs:overview} we provide some notation and reduce Theorem~\ref{thm:main} to several key lemmata. In Section~\ref{sec:anticon} we prove Theorem~\ref{thm: main}. Each subsequent section is then dedicated to the proof of one of the key lemmata stated in Section~\ref{subs:overview}.

\subsection{Proof Overview.}\label{subs:overview}
%

{\bf Preliminaries.} Recall that a real number $\alpha$ is called \emph{algebraic} if it is a root of a
polynomial with rational coefficients. Let $\ALG$ denote the set of algebraic numbers. The \emph{minimal polynomial} of $\alpha\in\ALG$
is the unique least degree monic polynomial in $\Q[X]$ with a root at $\alpha$.
The \emph{algebraic degree} of $\alpha$ is the degree of the minimal polynomial of $\alpha$, which we denote by $\deg(\alpha)$.
A real number $\alpha$ is said to be an \emph{algebraic integer} if all the coefficients of its minimal polynomial are integers.

We define $\Lambda(\alpha)$, the \emph{house} of $\alpha$, by
$$\Lambda(\alpha)=\max_{j\in\{1,\dots,\deg(\alpha)\}}|\alpha_j|,$$
Where $\ \alpha_1 = \alpha,\dots, \alpha_{\deg(\alpha)}$ are the conjugates of $\alpha$, i.e., the roots of the minimal polynomial of $\alpha$.

We further define the \emph{associated minimal polynomial} of $\alpha$ in $\Z[x]$ to be the unique polynomial in $\Z[x]$ of degree $\deg(\alpha)$ with a root at $\alpha$, whose leading coefficient is positive and whose coefficients are coprime.
\vspace{5pt}

\noindent{\bf Main lemmata.} The proof of Theorem~\ref{thm:main} breaks into several cases. In what follows in this subsection,  we let $P$ be as in Theorem~\ref{thm:main} with coefficient  distribution satisfying \eqref{eq:bd_support} and \eqref{eq:coefficient_condition}.

We first consider the probability of having a double root of algebraic degree and prove the following result.
\begin{lemma}[high degree]\label{lem:high_degree}
Given any $B>0$, there exist a constant $C_0>0$ such that
\[\P(P\text{ has a double root $\alpha$ with $\deg(\alpha)\ge
    C_0\log n$})= O(n^{-B}).\]
\end{lemma}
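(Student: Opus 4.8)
The plan is to bound, for each value of $d = \deg(\alpha)$ in the range $C_0 \log n \le d \le n$, the probability that $P$ has a double root of degree exactly $d$, and then sum over $d$. If $\alpha$ is a double root of $P$ with associated minimal polynomial $q \in \Z[x]$ of degree $d$, then $q^2 \mid P$ in $\Q[x]$, and since $q$ is primitive, Gauss's lemma gives $q^2 \mid P$ in $\Z[x]$; write $P = q^2 R$ with $R \in \Z[x]$ of degree $n - 2d$. The idea is to fix the ``quotient data'' and show that the coefficients of $P$ are then heavily constrained. Concretely, I would expose the low-order coefficients of $R$ one at a time: the map $R \mapsto q^2 R$ is, coefficient by coefficient, a triangular linear map (the bottom coefficient of $q^2$ is $q_0^2 \ne 0$ and the top is $q_d^2 \ge 1$), so once $q$ is fixed, $R$ is determined by $P$ and conversely the coefficients of $P = q^2 R$ in positions $0, \dots, d-1$ are determined by $q$ alone together with $R_0,\dots,R_{d-1}$, etc. The cleanest route is to evaluate at $z = \pm 2$: if $P = q^2 R$ then $P(2) = q(2)^2 R(2)$, so $q(2)^2 \mid P(2)$ in $\Z$ — actually I want the anti-concentration the other way, so instead I use that $P(2)$ takes a restricted set of values.

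Here is the mechanism I would actually use. Fix $d$ in the stated range. The associated minimal polynomial $q$ of a degree-$d$ double root has coefficients that are bounded: since $\alpha$ is a root of the degree-$n$ polynomial $P$ with coefficients bounded by $M$, a Mahler-measure / Landau-inequality argument bounds $\Lambda(\alpha)$ and hence all coefficients of $q$ by $C^n$ for some constant $C = C(M)$; in fact one gets the sharper statement that $|q_i| \le \binom{d}{i} \Lambda(\alpha)^i$ and $\Lambda(\alpha) \le (\text{something like } (M+1))$ using that $q \mid P$ and comparing Mahler measures — this gives at most $(C n)^{d+1}$ choices for $q$ (polynomial-in-$n$ to the power $d$). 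Now, for a fixed admissible $q$ of degree $d$, I claim $\P(q^2 \mid P \text{ in } \Z[x]) \le 2^{-c n}$ for a constant $c > 0$ depending only on the atom bound $\tfrac12$. To see this, note $q^2 \mid P$ forces $P(2) \equiv 0$ and $P(-2) \equiv 0$ modulo $q(2)^2$ and $q(-2)^2$ respectively, but more efficiently: write the divisibility $q^2 \mid P$ as a system of linear equations expressing the top $2d$ coefficients $\xi_{n}, \xi_{n-1}, \dots, \xi_{n-2d+1}$ (say) as $\Z$-linear functions of the remaining coefficients $\xi_0, \dots, \xi_{n-2d}$, with coefficients determined by $q$ — this is the triangular structure of multiplication by $q^2$. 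Thus conditioning on $\xi_0, \dots, \xi_{n-2d}$ fixes the required values of $\xi_{n-2d+1}, \dots, \xi_n$, each of which is then hit with probability at most $\tfrac12$ by independence, giving $\P(q^2 \mid P \mid \xi_0,\dots,\xi_{n-2d}) \le 2^{-2d} \le 2^{-2 C_0 \log n}$. Wait — that is only $n^{-2C_0 \log 2}$, not exponentially small, and summing the $(Cn)^{d+1}$ choices of $q$ over $d$ will blow up. So the triangular argument must be upgraded: rather than using only the top $2d$ coordinates, I use Theorem~\ref{thm: main}. Since $q^2 \mid P$ implies $P(2) = q(2)^2 R(2)$ with $R(2) \in \Z$ and $q(2)^2 \ge 1$, the value $P(2)$ lies in the lattice $q(2)^2 \Z$; but actually the point is that $P(2)$ is then a perfect-square multiple and more importantly $q(2)^2$ could be as small as $1$, so this alone is not enough either.

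The argument that genuinely works, and which I would write up, is this: combine the triangular divisibility with the anti-concentration Theorem~\ref{thm: main} applied to a sub-polynomial. Given $q$ of degree $d$, the relation $P = q^2 R$ with $\deg R = n - 2d$ means that $R$ itself is a ``random-ish'' polynomial, and $P(2) = q(2)^2 R(2)$. Now I apply Theorem~\ref{thm: main} not to $P$ but observe that the coefficients of $P$ are genuinely i.i.d.\ with atom $\le \tfrac12$, so $\P(P(2) = m) \le 2^{-n(1/2 + \ep)}$ for every $m$. Given a fixed $q$, the number of integers $m$ of the form $q(2)^2 \cdot (\text{integer})$ that are attainable as $P(2)$ is at most $2 \cdot M (2^{n+1}-1)/q(2)^2 + 1 \le M 2^{n+2}/q(2)^2$ (since $|P(2)| \le M(2^{n+1}-1)$); moreover $|q(2)| \ge 1$ always and typically $q(2) \ge 2^{d-1}$ unless $q$ is close to cyclotomic. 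Splitting into the case $|q(2)|$ large (few values of $m$, so union bound over $m$ against $2^{-n(1/2+\ep)}$ wins) and $|q(2)|$ small, i.e. $q(2)^2 \le 2^{n \ep /2}$ say (which forces $q$ to have most roots inside or near the unit disk, a codimension condition limiting such $q$ to far fewer than $(Cn)^{d+1}$ — in fact a counting bound like $2^{o(n)}$ for the number of degree-$d$ integer polynomials with $|q(2)| \le 2^{n\ep/2}$ when $d \ge C_0 \log n$), and using the crude triangular bound $2^{-2d} \le n^{-2 C_0 \log 2}$ in the small case but now with a genuinely small number of $q$'s, the total is $\le 2^{o(n)} \cdot n^{-2C_0\log 2} + (Cn)^{n} 2^{-n(1/2+\ep)}$; choosing $C_0$ large enough relative to $B$ handles the first term and the exponential decay beats the polynomial count in the second. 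I then sum over $d$ from $C_0 \log n$ to $n$, which costs only a factor $n$.

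The main obstacle, as the above makes clear, is exactly the tension flagged in the remarks of the paper: controlling $q$ with small house (equivalently small $|q(2)|$), where the naive triangular/union bound is too weak and one must instead exploit either Theorem~\ref{thm: main} or a structural count of integer polynomials with many roots near the unit circle. I expect the write-up to route through: (i) Gauss's lemma to get $q^2 \mid P$ over $\Z$; (ii) a Mahler-measure bound giving $\Lambda(\alpha) = O(1)$ and hence $|q_i| \le \binom{d}{i} C^i$, bounding the number of candidate $q$ by $(Cn)^{O(d)}$; (iii) for fixed $q$, the triangular structure of multiplication by $q^2$ giving $\P(q^2 \mid P) \le 2^{-2d}$, combined with the value-restriction $P(2) \in q(2)^2\Z$ and Theorem~\ref{thm: main}; (iv) the dichotomy on $|q(2)|$ and the count of $q$ with small $|q(2)|$; (v) summation over $d$ and choice of $C_0 = C_0(B)$. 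Step (iv) is where the real work lies and where I would expect to spend the most effort, likely invoking that any $q$ with $|q(2)| \le 2^{\eta d}$ has at least a positive fraction of its roots within $O(\eta)$ of the unit circle, and such polynomials of degree $d$ number at most $2^{O(\eta d \log(1/\eta))}$ — small enough, once $d \ge C_0 \log n$ and $C_0$ is large, to absorb the polynomial losses.
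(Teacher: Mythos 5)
Your steps (i) and (iii) are sound --- Gauss's lemma does give $q^2\mid P$ in $\Z[x]$, and the triangular structure of multiplication by $q^2$ does give $\P(q^2\mid P)\le 2^{-2d}$ for each fixed $q$ --- and your instinct that everything hinges on controlling $|q(\pm2)|$ is exactly right. But both branches of your dichotomy have genuine gaps. In the ``large $|q(2)|$'' branch, the estimate $\P\big(P(2)\in q(2)^2\Z\big)\le M2^{n+2}q(2)^{-2}\cdot2^{-n(1/2+\ep)}$ is vacuous unless $q(2)^2\gtrsim 2^{n(1/2-\ep)}$, so with your cutoff $q(2)^2\le 2^{n\ep/2}$ the entire range $2^{n\ep/2}<q(2)^2<2^{n(1/2-\ep)}$ is uncovered; worse, you still union this bound over the $(Cn)^{d+1}$ candidate minimal polynomials, which for $d$ comparable to $n$ is $2^{\Theta(n\log n)}$ and cannot be beaten by $2^{-n(1/2+\ep)}$ --- your assertion that ``the exponential decay beats the polynomial count in the second [term]'' is false, since $(Cn)^{n}2^{-n(1/2+\ep)}\to\infty$. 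In the ``small $|q(2)|$'' branch you lean on an unproven counting claim ($2^{O(\eta d\log(1/\eta))}$ integer polynomials of degree $d$ with $|q(2)|\le 2^{\eta d}$ and coefficients of the relevant size) that is at least as hard as the lemma itself.

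The paper closes both gaps with two ideas absent from your write-up. First, it never unions over candidate minimal polynomials: from $f_\alpha(a)^2\mid P(a)$ for $a=\pm2$ it passes to the event that $P(a)$ is divisible by $k^2$ for \emph{some} integer $k\ge K$, and unions over the integer $k$, using $\P(k^2\mid P(a))\le Ck^{-(1+\ep)}$. That bound is itself a sharper use of Theorem~\ref{thm: main} than yours: one conditions on the coefficients $\xi_j$ with $j\ge r$, where $M2^r\le k^2$, so that divisibility by $k^2$ pins the low-order sum $\sum_{j<r}\xi_ja^j$ to a single value of probability at most $2^{-r(1/2+\ep)}\approx k^{-(1+\ep)}$; the tail $\sum_{k\ge K}k^{-(1+\ep)}=O(K^{-\ep})$ is then summable, with no dependence on how many $q$ realize a given $k$. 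Second --- and this is the structural fact that eliminates the small-$|q(2)|$ case entirely --- every conjugate of a double root $\alpha$ of $P$ is itself a root of $P$, and a Jensen's-formula argument shows a polynomial with integer coefficients bounded by $M$ has at most $64M$ roots of modulus at least $3/2$; hence at least $\deg(\alpha)-64M$ conjugates $\beta$ satisfy $|\beta^2-4|\ge 7/4$, which yields $|f_\alpha(2)f_\alpha(-2)|\ge(7/4)^{d-64M}n^{-O(1)}$ outside the negligible event that $P$ has a root within $n^{-K}$ of $\pm2$. So $\max\{|f_\alpha(2)|,|f_\alpha(-2)|\}$ is automatically exponentially large in $d$, there is nothing to count in the small case, and taking $K=ce^{c'd}n^{-C}$ in the union over $k$ gives the proposition, with $C_0$ chosen large in terms of $B$. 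Without some substitute for this lower bound on $|f_\alpha(\pm2)|$, your argument does not close.
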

The proof of Lemma~\ref{lem:high_degree} follows the line of arguments given in \cite{PSZ}, which, in turn, was based on idea that appeared in a work of Filaseta and Konyagin \cite{FK96}. However, several modifications are needed when dealing with general integer-valued coefficients. Most crucially, we need a new anti-concentration bound (Theorem~\ref{thm: main}) that consumes the bulk of our effort. Let us point out here that Theorem~\ref{thm: main}  is the only place where Assumption~\eqref{eq:coefficient_condition} is crucially used.

By virtue of Lemma~\ref{lem:high_degree}, we now have to deal with potential double roots with low algebraic degree, more precisely, with degree at most $C_0 \log n$.  In the next lemma we show that the probability that $P$ has a root at an algebraic numbers of low degree such that one of its conjugates lying at a distance of at least $\Omega( (\log n)^{-1})$ from the unit circle is negligible.
\begin{lemma}[low degree roots far away from the unit circle]\label{lem: med deg large house}
For every $B>0$ and $C_0>0$, there exists $C_1>0$ such that
$$\Pro\Big(P\text{ has a root } \alpha\ :\
\deg(\alpha)\le C_0\log n\text{ and }
\Lambda(\alpha)>1+\tfrac{C_1}{\log n}\Big)= O(e^{ - \frac{B n}{\log n }}).$$
\end{lemma}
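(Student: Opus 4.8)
\emph{Reduction.} The plan is to use Gauss's lemma to reduce to an anti-concentration estimate for a single fixed polynomial, and then to extract exponential gain from the house hypothesis via the Mahler measure. Suppose $P$ has a root $\alpha$ with $d:=\deg(\alpha)\le C_0\log n$ and $\Lambda(\alpha)>1+C_1/\log n$; necessarily $\alpha\ne 0$ (as $\Lambda(0)=0$), so the associated minimal polynomial $g\in\Z[x]$ of $\alpha$ has $g(0)\ne 0$ and, being primitive, divides $P$ in $\Z[x]$ by Gauss's lemma; write $P=gh$ with $h\in\Z[x]$, $\deg h\le n-d=:m$. By Mignotte's inequality $\|g\|_\infty\le 2^d\|P\|_2\le 2^{C_0\log n}M\sqrt{n+1}=n^{O(1)}$, so the number of polynomials $g$ that can arise is at most $\sum_{1\le d\le C_0\log n}\big(n^{O(1)}\big)^{d+1}=e^{O((\log n)^2)}$. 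A union bound therefore reduces the lemma to showing that for each such $g$ --- which has a root $\beta$ with $|\beta|>1+\delta$, where $\delta:=C_1/\log n$ --- one has $\Pro(g\mid P)\le e^{-c\delta n}$ for an absolute constant $c>0$. Choosing $C_1:=2B/c$ then finishes the proof, since $e^{O((\log n)^2)}$ is negligible against $e^{-c\delta n}=e^{-2Bn/\log n}$.

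\emph{The per-polynomial estimate.} Fix such a $g$. By Jensen's formula $M(g)\ge|\beta|>1+\delta$, and a Szeg\H{o}-type computation (valid since $\deg h=m\to\infty$) shows that the lattice $\mathcal L_g:=g\cdot\Z[x]_{\le m}\subset\Z[x]_{\le n}$ has covolume in its span equal to $M(g)^{m(1+o(1))}\ge e^{\Omega(\delta n)}$. Since $\{g\mid P\}=\{P\in\mathcal L_g\}$, we must bound the probability that a random polynomial with i.i.d.\ coefficients of maximal atom $\le\tfrac12$ and bounded support falls into this sparse lattice; the crude bound $\Pro(P=Q)\le 2^{-(n+1)}$ for each $Q$ is too weak, since for $M\ge 2$ the number of points of $\mathcal L_g$ inside $[-M,M]^{n+1}$ is of order $(2M+1)^mM(g)^{-m}$ and the factor $(2M+1)^m$ is not beaten by $M(g)^{-m}=e^{-O(\delta n)}$. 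Instead we pass to characters: if $\mu$ denotes the law of $\xi_0$ and $\hat\mu$ its Fourier transform on $\R/\Z$, then
\[
\Pro(g\mid P)=\sum_{h\in\Z^{m+1}}\ \prod_{j=0}^{n}\mu\big((gh)_j\big)=\frac{1}{M(g)^{m(1+o(1))}}\int_{W_g}\ \prod_{j=0}^{n}\hat\mu(\theta_j)\,d\mathcal H^{d},
\]
where $W_g=\{\theta\in(\R/\Z)^{n+1}:\sum_i g_i\theta_{k+i}\in\Z,\ 0\le k\le m\}$ is a $d$-dimensional subtorus, $\mathcal H^d$ its Hausdorff measure, and the prefactor the coarea Jacobian (one checks this formula against $g=x-1$, where it recovers $\Pro(\sum_k\xi_k=0)=\int_0^1\hat\mu(\theta)^{n+1}\,d\theta$). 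It then remains to bound $\int_{W_g}\prod_j|\hat\mu(\theta_j)|\,d\mathcal H^d$ by $e^{o(\delta n)}$. Near the ``structured'' points of $W_g$ --- where $\prod_j|\hat\mu(\theta_j)|$ need not be small, e.g.\ near $\theta=0$ --- this uses that $W_g$ is an extremely thin sliver there: its tangent space $\ker\big(\theta\mapsto(\sum_ig_i\theta_{k+i})_k\big)$ consists of the degree-$\le n$ truncations of $r(x)/g^*(x)$ with $\deg r<d$, whose coefficients grow like $|\beta|^k$ because $g^*$ has a root of modulus $|\beta|^{-1}<(1+\delta)^{-1}$, forcing the corresponding portion of $W_g$ to have tiny $\mathcal H^d$-measure. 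On the remaining part of $W_g$ the coordinates $\theta_j$ equidistribute and $\prod_j|\hat\mu(\theta_j)|\le\rho^{\Omega(n)}$, where $\rho:=\int_0^1|\hat\mu|<1$ by the non-degeneracy guaranteed by \eqref{eq:coefficient_condition}. Combining these gives $\Pro(g\mid P)\le e^{-\Omega(\delta n)}$, as needed.

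\emph{Main obstacle.} The delicate point, and the reason the argument of \cite{PSZ} does not transfer, is precisely this last estimate. For $\pm1$ coefficients the analogous bound is immediate from the injectivity of $(a_i)\mapsto\sum a_i 2^i$; here $h\mapsto gh$ affords no such separation, and --- worse --- the non-dominant conjugates of $\alpha$ may lie \emph{on} the unit circle (there exist algebraic numbers, not roots of unity, all of whose conjugates are unimodular), so the linear recurrence defining $W_g$ carries a neutral, purely rotational part alongside the single expanding direction coming from $\beta$. Showing that the $\beta$-direction still forces the decay of $\prod_j|\hat\mu(\theta_j)|$ through this rotational part --- equivalently, that the sliver estimate is not destroyed by unimodular conjugates --- is the heart of the matter, and is where conditions \eqref{eq:bd_support} and \eqref{eq:coefficient_condition}, together with a blocking of the coefficient indices at the scale $\delta^{-1}=\Theta(\log n)$, are essential.
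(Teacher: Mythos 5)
Your reduction step (Gauss's lemma, bounding the coefficients of the candidate minimal polynomials to get at most $e^{O((\log n)^2)}$ of them, and a union bound reducing everything to a per-polynomial estimate of order $e^{-cn/\log n}$) matches the paper and is fine. The gap is in the per-polynomial estimate, which is the entire content of the lemma. You set up a Fourier-analytic/lattice framework — writing $\Pro(g\mid P)$ as an integral over a subtorus $W_g$ with a ``coarea Jacobian'' $M(g)^{-m(1+o(1))}$ — but this identity is not a rigorous statement (an exact formula cannot carry a $(1+o(1))$ in its exponent), and the two estimates you would need to finish (the ``sliver'' bound near structured points and the equidistribution bound elsewhere) are not proved; indeed you explicitly flag the interaction with unimodular conjugates as ``the heart of the matter'' and leave it open. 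As written, the proposal does not establish $\Pro(g\mid P)\le e^{-c\delta n}$.

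The idea you are missing is that one never needs to control divisibility by $g$ as a lattice condition, and the unimodular conjugates are irrelevant here. The hypothesis $\Lambda(\alpha)>1+C_1/\log n$ hands you a single conjugate $\beta$ with $|\beta|>1+C_1/\log n$, and $g\mid P$ forces the scalar event $P(\beta)=0$. To bound $\Pro(P(\beta)=0)$, sparsify: let $\ell$ be minimal with $|\beta|^{\ell}$ larger than a constant multiple of $M$ (so $\ell=\Theta(\log n/C_1)$), split $P=P_1+P_2$ where $P_1$ collects the coefficients with indices divisible by $\ell$, and observe that the map $(\xi_0,\xi_\ell,\dots,\xi_{\lfloor n/\ell\rfloor \ell})\mapsto P_1(\beta)$ is injective because consecutive retained powers of $\beta$ are separated by more than the total contribution of all lower ones. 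Conditioning on $P_2(\beta)$ and using the max-atom bound \eqref{eq:coefficient_condition} then gives $\Pro(P(\beta)=0)\le 2^{-\lfloor n/\ell\rfloor}=e^{-\Omega(C_1 n/\log n)}$, and choosing $C_1$ large in terms of $B$ beats both the required rate and the $e^{O((\log n)^2)}$ union bound. This restores, at the point $\beta$ itself, exactly the injectivity you correctly observed is lost at the point $2$, and it is why the argument for this lemma needs none of the machinery you propose.
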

For the proof, we use a simple sparsification of $P$ to bound the root probability for each fixed low-degree algebraic number lying far away from the unit circle and then employ a rather crude union bound.
After Lemma~\ref{lem: med deg large house}, we next deal with the low degree double roots with small house (i.e.\  all of their conjugates lying close to the unit circle).  We break this into two cases. First we consider the case when the degree of the root is at least $5$ and we show that
\begin{lemma}[low degree roots close to the unit circle]\label{lem: med deg small house}
For every $C_0>0$  and $C_1>0$, we have
$$\Pro\left(P\text{ has a root } \alpha\ :\ 4<\deg(\alpha)\le C_0\log n\text{ and }\Lambda(\alpha)\le1+\tfrac{C_1}{\log n}\right)=o(n^{-2}).$$
\end{lemma}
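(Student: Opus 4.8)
The plan is to reduce the statement to a bound on the number of relevant algebraic numbers together with a pointwise bound on the probability that $P$ vanishes at any single such number. Fix $d$ with $4 < d \le C_0\log n$ and let $\alpha$ be an algebraic number of degree $d$ with $\Lambda(\alpha) \le 1 + C_1/\log n$, and let $Q_\alpha(x) = \sum_{k=0}^d q_k x^k \in \Z[x]$ be its associated minimal polynomial. The Mahler measure of $Q_\alpha$ is $|q_d|\prod_{j}\max(1,|\alpha_j|) \le |q_d|(1+C_1/\log n)^d$, which is $O(|q_d|)$ since $d = O(\log n)$; by comparison of the Mahler measure with the sup-norm of the coefficients (Mahler's inequality $\|Q_\alpha\|_\infty \le \binom{d}{\lfloor d/2\rfloor} M(Q_\alpha)$), every coefficient $q_k$ satisfies $|q_k| \le 2^{d}|q_d| \cdot O(1)$. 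First I would split into two regimes according to the size of the leading coefficient $|q_d|$ (equivalently the height $H(\alpha) := \max_k |q_k|$): a ``small height'' regime $H(\alpha) \le n^{A}$ for a suitable constant $A$, and a ``large height'' regime $H(\alpha) > n^{A}$.

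For the large-height regime, I would use a sparsification argument exactly as in the proof of Lemma~\ref{lem: med deg large house}: if $\alpha$ is a root of $P$ then $P$ is divisible by $Q_\alpha$ in $\Z[x]$, so $P = Q_\alpha R$ for some $R \in \Z[x]$ of degree $n - d$; reading off coefficients, the top coefficient of $P$ equals $q_d$ times the top coefficient of $R$, so $|q_d|$ must divide a bounded-by-$M$ quantity unless further structure is used — more usefully, fixing the bottom and top $O(1)$ coefficients of $R$ and matching them against $P$ forces the event onto a set of probability at most $(\text{something})^{-cn}$ after a union bound over the $O(\text{poly})$ many choices, since a polynomial with integer coefficients of height $> n^A$ and degree $O(\log n)$ cannot divide $P$ without a strong constraint on many coefficients of $P$ simultaneously. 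The number of such $\alpha$ of degree $d$ and height at most $H$ is $O(H^{d+1})$, and the root probability for each is exponentially small once $H$ is polynomial in $n$, so the contribution is $o(n^{-2})$ comfortably. I expect this regime to be routine.

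The small-height regime is the heart of the matter and the step I expect to be the main obstacle. Here the number of candidate algebraic numbers of degree $d$ and height $\le n^A$ is only polynomial in $n$ (roughly $n^{A(d+1)} = n^{O(\log n)}$, which is subexponential), so a union bound will work provided we have a pointwise bound $\P(P(\alpha)=0) = o(n^{-2}/n^{O(\log n)})$, i.e. something like $\P(P(\alpha)=0) \le e^{-cn/\log n}$ or even just $n^{-\omega(\log n)}$. To get such a pointwise bound for a fixed $\alpha$ whose conjugates all lie within $O(1/\log n)$ of the unit circle, I would exploit that $P(\alpha) = 0$ forces, for each conjugate $\alpha_j$, the value $\sum_{i} \xi_i \alpha_j^i = 0$; taking a suitable subset $S \subseteq \{0,\dots,n\}$ of indices spaced $\Theta(d)$ apart (so that $|S| = \Theta(n/\log n)$) and conditioning on $(\xi_i)_{i\notin S}$, the conditional law of $\sum_{i\in S}\xi_i\alpha^i$ is a sum of independent terms, and I would apply an anti-concentration / Turán-type argument (or the Erd\H{o}s--Littlewood--Offord-style estimate available under \eqref{eq:coefficient_condition}) together with the fact that on the unit circle the monomials $\alpha^i$ don't collapse — here the restriction $\deg(\alpha) \ge 5$ should enter, presumably to rule out the finitely many low-degree roots of unity and Salem-type obstructions where the powers $\alpha^i$ fail to be sufficiently spread, so that a genuine small-ball bound of the form $\P(|\sum_{i\in S}\xi_i\alpha^i| < \epsilon) \le (1-c)^{|S|}$ holds. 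Combining the polynomial count of candidates with this exponentially small pointwise probability and summing over $4 < d \le C_0\log n$ yields the claimed $o(n^{-2})$. The delicate point, and where I would spend most of the effort, is making the pointwise anti-concentration bound uniform over all $\alpha$ of degree in $(4, C_0\log n]$ with $\Lambda(\alpha)$ near $1$ — in particular quantifying how close to a root of unity $\alpha$ must be before the small-ball bound degrades, and checking that the excluded degrees $\deg(\alpha) \le 4$ are exactly those where this cannot be salvaged (to be treated separately in Section~\ref{sec:tiny_deg}).
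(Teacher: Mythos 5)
Your overall reduction (count the candidate algebraic numbers, bound the pointwise root probability, union bound) is the right shape, but the way you distribute the work between the two factors contains a genuine gap. In your ``small-height'' regime you need a pointwise bound of the form $\P(P(\alpha)=0)\le e^{-cn/\log n}$ (or at least $n^{-\omega(\log n)}$) to beat your candidate count of $n^{O(\log n)}$, and you propose to get it from a small-ball estimate $\P\big(|\sum_{i\in S}\xi_i\alpha^i|<\eps\big)\le(1-c)^{|S|}$ with $|S|=\Theta(n/\log n)$. No such bound can hold on the set in question: a primitive $k$-th root of unity with $4<\varphi(k)\le C_0\log n$ (e.g.\ a primitive $7$th root of unity, of degree $6$) has $\Lambda(\alpha)=1$ and degree strictly larger than $4$, so it is not excluded, and for such $\alpha$ the probability $\P(P(\alpha)=0)$ is genuinely polynomial in $n$ (of order roughly $n^{-\deg(\alpha)/2}$), not exponentially small. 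The hypothesis $\deg(\alpha)>4$ does not rule out roots of unity or other unimodular numbers, so it cannot be leveraged into an exponential small-ball bound; and with only a polynomial pointwise bound available, your $n^{O(\log n)}$ candidates are far too many for a union bound. (Your ``large-height'' regime is in fact vacuous: by Gauss's lemma the leading coefficient of the associated minimal polynomial of any root of $P$ divides $\xi_n$, hence is at most $M$, and together with the house bound this already forces all coefficients to be polynomially bounded in $n$; this is Observation~\ref{obs:root_house_bdd} and the computation at the start of the proof of Lemma~\ref{lem: med deg large house}.)

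The paper closes the gap by distributing the work in the opposite way. The pointwise bound is only polynomial: Lemma~\ref{lem:med_deg} gives $\P(P(\alpha)=0)=O(n^{-5/2+\eps})$ for every fixed $\alpha$ with $\deg(\alpha)\ge 5$, via the Tao--Vu inverse Littlewood--Offord theorem (Theorem~\ref{thm: Tau Vu}): were the concentration probability larger, all but $O(\sqrt n)$ of the coefficients $d_j\alpha^j$ would lie in a symmetric GAP of rank at most $4$, so five consecutive ones would satisfy a nontrivial integer linear relation, forcing $\deg(\alpha)\le 4$ --- this is where the hypothesis $\deg(\alpha)>4$ actually enters. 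The union bound then succeeds because the house condition $\Lambda(\alpha)\le 1+\tfrac{C_1}{\log n}$ is vastly more restrictive than any height bound: by a theorem of Dubickas \cite{Du}, adapted in Lemma~\ref{lem:poly_count} to non-monic polynomials by a repetition trick, the number of integer polynomials of degree $d\le C_0\log n$ with leading coefficient at most $M$ and such a small house is $\exp\big(O((\log n)^{5/6})\big)=n^{o(1)}$. Multiplying $n^{o(1)}$ candidates by $n^{-5/2+\eps}$ gives $o(n^{-2})$. This counting lemma is the idea your proposal is missing; without it, or some substitute cutting the candidate set down to $O(n^{1/2-\delta})$ elements, the approach does not close.
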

From a standard application of inverse Littlewood-Offord type results, it follows that for any fixed algebraic number $\alpha$ of degree at least $5$, $\P(P(\alpha) = 0) = O_\eps(n^{-5/2 +\eps}),$ for any $\eps>0$. This is shown in Lemma~\ref{lem:med_deg}. More importantly,  to prove Lemma~\ref{lem: med deg small house}, we need to count the number of  algebraic numbers $\alpha$ such that $deg(\alpha)\le C_0\log n\text{ and }\Lambda(\alpha)\le1+\tfrac{C_1}{\log n}$. Towards this direction, we show in Lemma~\ref{lem:poly_count} that they are at most $o(n^{\eps})$ in number for any $\eps>0$. The counting estimate makes heavy use of a result of Dubickas \cite{Du}.

Finally, the next lemma  takes care of the potential double roots that have degree at most $4$ (excluding $0, \pm 1$) and have small house.
\begin{lemma}[roots with degree at most $4$] \label{lem: small deg small house}
For every $C_0>0$  and $C_1>0$, we have
$$\Pro\left(P\text{ has a double root } \alpha\neq0,\pm1 :\ \deg(\alpha)\le 4\text{ and }\Lambda(\alpha)\le1+\tfrac{C_1}{\log n}\right)=o(n^{-2})$$
\end{lemma}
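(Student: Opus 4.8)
The plan is to use that if $\alpha$ is a double root of $P$ then $\tilde Q(x)^2\mid P(x)$ in $\Z[x]$, where $\tilde Q$ is the associated minimal polynomial of $\alpha$ (of degree $d=\deg(\alpha)\le4$, primitive, with positive leading coefficient $a_d$ and nonzero constant term $a_0$, since $\alpha\ne0$). First I would show that, for large $n$, only finitely many $\tilde Q$ can occur. Writing $P=\tilde Q^2S$ with $S\in\Z[x]$, and assuming $P\not\equiv0$ (the opposite has probability $\le2^{-(n+1)}$), the lowest and highest nonzero coefficients of $P$ equal $a_0^2$ and $a_d^2$ times those of $S$; being nonzero integers of modulus $\le M$, they force $|a_0|,|a_d|\le\sqrt M$. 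The remaining coefficients are $a_d$ times elementary symmetric functions of the conjugates, so $|a_k|\le a_d\binom{d}{d-k}\Lambda(\alpha)^{d-k}=O_M(1)$ once $\Lambda(\alpha)\le1+\tfrac{C_1}{\log n}<2$. Hence $\tilde Q$ lies in a fixed finite set $\mathcal Q$, independent of $n$; and since the finitely many members of $\mathcal Q$ having a conjugate of modulus $>1$ have a minimal house $1+\delta_0>1$, once $\tfrac{C_1}{\log n}<\delta_0$ the hypothesis $\Lambda(\alpha)\le1+\tfrac{C_1}{\log n}$ collapses to $\Lambda(\alpha)\le1$. So for large $n$ it suffices to prove $\Pro(\tilde Q^2\mid P)=o(n^{-2})$ for each fixed $\tilde Q\in\mathcal Q$ (with $\tilde Q\ne x,x\pm1$) all of whose roots lie in the closed unit disc.

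Fix such a $\tilde Q$ with root $\alpha$, and split on whether all conjugates of $\alpha$ lie on the unit circle. If some conjugate $\beta$ satisfies $|\beta|<1$, then for the reversed polynomial $P^*(x):=x^nP(1/x)=\sum_j\xi_{n-j}x^j$, which has the same distribution as $P$, one has $\tilde Q\mid P\iff\tilde Q^*\mid P^*$, where $\tilde Q^*(x)=x^d\tilde Q(1/x)$ has the root $\gamma:=1/\beta$ with $|\gamma|>1$; as $\mathcal Q$ is finite, $|\gamma|$ exceeds $1$ by an absolute constant. Thus $\Pro(\tilde Q^2\mid P)\le\Pro(\tilde Q\mid P)=\Pro(\tilde Q^*\mid P^*)\le\Pro(P^*(\gamma)=0)=\Pro(P(\gamma)=0)$, and for $n$ large $\gamma$ has $\deg(\gamma)=d\le4\le\log n$ and house exceeding $1+\tfrac{C_1'}{\log n}$ for every fixed constant $C_1'$, so Lemma~\ref{lem: med deg large house} (with $B=3$, $C_0=1$) gives $\Pro(P(\gamma)=0)=O(e^{-3n/\log n})=o(n^{-2})$.

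In the remaining case $\alpha=e^{i\theta}$ with $\theta\in(0,\pi)$; here I would only use that $\tilde Q^2\mid P$ forces $P'(\alpha)=0$, i.e.\ $\sum_{j=1}^nj\xi_j e^{i(j-1)\theta}=0$, equivalently the pair $\sum_jj\xi_j\cos((j-1)\theta)=\sum_jj\xi_j\sin((j-1)\theta)=0$. The covariance of this $\R^2$-valued sum is $\var(\xi_0)\sum_{j=1}^nj^2w_jw_j^{\mathsf{T}}$ with $w_j=(\cos((j-1)\theta),\sin((j-1)\theta))$; since $\theta\ne0,\pi$, the oscillatory sums $\sum j^2e^{2ij\theta}$ are only $O(n^2)$, so this matrix is $\tfrac16\var(\xi_0)n^3I_2+O(n^2)$, hence non-degenerate with both eigenvalues of order $n^3$. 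An anti-concentration estimate for $\sum_jj\xi_jw_j$ — elementary via weighted block sums when $\alpha$ is a root of unity of order $r$ (each relation among the block sums $\sum_{j\equiv k\ (\mathrm{mod}\ r)}j\xi_j$ has distinct integer weights up to $n$, hence probability $O(n^{-3/2})$, and two such relations involve disjoint coefficients), and otherwise the analogue for $P'$ of the inverse Littlewood–Offord input behind Lemma~\ref{lem:med_deg}, using that the weights $jw_j$ grow linearly and are equidistributed in direction — then yields $\Pro(P'(\alpha)=0)=O(n^{-3})=o(n^{-2})$. Summing over the finitely many $\tilde Q\in\mathcal Q$ completes the proof.

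The hardest step is this last anti-concentration bound: one must show that the $\R^2$-valued sum $\sum_jj\xi_jw_j$, with unbounded and algebraically constrained coefficient weights, assigns probability $o(n^{-2})$ to any single point. The hypothesis $\alpha\ne\pm1$ enters precisely in making the two real equations jointly non-degenerate (covariance of exact order $n^3$ rather than rank-deficient, which is the obstruction at $\alpha=1$), and the case where $\alpha$ is not a root of unity genuinely needs the inverse Littlewood–Offord machinery — a naive counting of admissible coefficient vectors is too lossy. The reduction to finitely many $\tilde Q$ and the reversal step are routine once the bookkeeping is in place.
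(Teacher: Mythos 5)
Your overall architecture matches the paper's: reduce to a finite, $n$-independent set of candidate minimal polynomials, dispose of any conjugate off the unit circle by an exponential bound (you do this via reversal plus Lemma~\ref{lem: med deg large house}; the paper's Lemma~\ref{lem:off_circle} does it directly for a fixed $\gamma$ with $|\gamma|\ne 1$ — your handling of conjugates \emph{strictly inside} the disc is actually more explicit than the paper's), and then treat fixed unimodular $\alpha\ne\pm1$ of degree at most $4$, splitting on whether $\alpha$ is a root of unity. Your root-of-unity case (block sums $\sum_{j\equiv r\ (\mathrm{mod}\ k)} j\xi_j$ with distinct weights, S\'ark\"ozi--Szemer\'edi giving $O(n^{-3/2})$ per block, at least two independent blocks since $\deg(\alpha)\ge 2$) is exactly the paper's Lemma~\ref{lem:roots_of_unity}, modulo the Bernoulli-mixture conditioning needed to pass from general $\xi_j$ to $\Ber(\tfrac12)$ variables.

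The genuine gap is the case you yourself flag as hardest: $\alpha$ unimodular of degree at most $4$ but \emph{not} a root of unity (e.g.\ a root of $3x^2-x+3$). Your proposal there is ``the analogue for $P'$ of the inverse Littlewood--Offord input behind Lemma~\ref{lem:med_deg},'' but that mechanism cannot work here: the proof of Lemma~\ref{lem:med_deg} extracts from Theorem~\ref{thm: Tau Vu} a symmetric GAP of rank at most $4$ containing five consecutive weights $d_{j_0+k}\alpha^{j_0+k}$, and the resulting integer relation forces $\deg(\alpha)\le 4$ — which is a \emph{contradiction} only when $\deg(\alpha)\ge 5$. For $\deg(\alpha)\le 4$ no contradiction arises, so the inverse theorem yields no bound at all; the non-degeneracy of the $\R^2$-covariance that you compute is likewise only a necessary condition and does not by itself give pointwise anti-concentration for integer-weighted sums. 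The paper closes this case by a different device (Lemma~\ref{lem:unimodular_but_not_root_unity}): it applies Hal\'asz's inequality (Lemma~\ref{lem:halasz}) with $\ell=2$ to $P(\alpha)=\sum_j\xi_j\alpha^j$ itself, and uses $|\alpha|=1$ together with the hypothesis that $\alpha$ is not a root of unity to show that the additive quadruple equation $\alpha^i+\alpha^j=\alpha^k+\alpha^l$ has only $O(n^2)$ solutions, whence $\P(P(\alpha)=0)=O(n^{-5/2})$. Your sketch contains no substitute for this quadruple-counting step, so the key estimate $\P(P'(\alpha)=0)=o(n^{-2})$ (or $\P(P(\alpha)=0)=o(n^{-2})$) in this case remains unproven.
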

It is not hard to see that if  $\alpha$ is a root of $P$ for large enough $n$ satisfying the conditions that $\deg(\alpha) = O(1)$ and $\Lambda(\alpha) = o(1)$, then  it must be a unimodular root, i.e., all of the conjugates of $\alpha$ must lie on the unit circle. Now if $\alpha$ is a root of unity, we closely follow  \cite{PSZ} to bound the probability of having a double root $\alpha$ which involves an
application of an anti-concentration bound due to S\'ark\"ozi and Szemer\'edi \cite{SS65}.  However, when $\alpha$ is unimodular but not a root of unity, we need a new argument to bound the probability of having a double root at $\alpha$. In fact,  in Lemma~\ref{lem:unimodular_but_not_root_unity} we show that $\P(P(\alpha) = 0) = O(n^{-5/2})$. The argument relies on a powerful anti-concentration bound by Hal\'asz \cite{H77}.

Clearly, the first assertion of Theorem~\ref{thm:main} is an immediate consequence of lemmata~\ref{lem:high_degree}, \ref{lem: med deg large house}, \ref{lem: med deg small house},~\ref{lem: small deg small house}.  To prove the second assertion of Theorem~\ref{thm:main} , note that since $P(\xi_0=0)=0$, with probability one, $P$ can not have a root at $0$. So, we need to show that
\[ \P( P \text{ has a double root at } \pm 1 ) = O(n^{-2}).\]
The above bound follows from an application of optimal inverse Littlewood-Offord  theorem \cite[Theorem 2.5]{NV11}.
For details, see Lemma~A.5 in \cite{DNV} where the same has been proved under the assumption that $\xi_0$ has bounded $(2+\eps)$ moment. This completes the proof of Theorem~\ref{thm:main}.

\section{Anti-concentration of $P(\pm2)$}\label{sec:anticon}

In this section we prove Theorem~\ref{thm: main}. As an important first step, we will find a very useful a characterization of integer-valued measures with max-atom bounded by $\tfrac12$ in terms of mixture of two-point distributions.


\subsection{Bernoulli mixture}
A probability measure $\mu$ is said to be a \emph{(unbiased) Bernoulli measure} if   $\mu = \tfrac12 \delta_a + \tfrac12 \delta_b,$  where $a \ne b \in \Z$ and $\kr_x$ is the Dirac measure at $x$.  A countable mixture of  unbiased Bernoulli measures is simply said to be a \emph{Bernoulli mixture}.  In other words, a probability measure $\mu$ is a Bernoulli mixture if
it can be written as  \[ \mu=\frac12\sum_{i=1}^\infty t_i(\kr_{a_i}+\kr_{b_i}) \]
where $t_i\ge 0$ satisfy $\sum_i t_i =1 $ and  $a_i \ne b_i\in \Z$ for each $i$.

Note that if the distribution of a random variable $\xi$ is a Bernoulli mixture, then there exist a random vector $(I, \Delta)$ on $\mathbb{Z} \times \mathbb{N}$, such that
\begin{equation}\label{eq:bm_rep}
\xi \stackrel{d}{=} I + B \Delta,
\end{equation}
where $B$ is a $\Ber(\tfrac12)$   random variable,  independent from both $I$ and $\Delta$. With a slight abuse of notation, we will also call such a random variable $\xi$ a Bernoulli mixture.



The following proposition gives a useful characterization for Bernoulli mixtures.
\begin{proposition}[Bernoulli mixture]\label{propos:mixture-reduction}
A integer-valued random variable $\xi$  is a Bernoulli mixture if and only if it  satisfies $\max_{x\in\Z}\Pro\big(\xi=x\big)\le 1/2$.
\end{proposition}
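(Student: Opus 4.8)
The plan is to dispose of the ``only if'' direction by inspection, recast the ``if'' direction as the existence of a symmetric diagonal-free self-coupling of $\mu$, settle this first for finitely supported $\mu$ by an explicit peeling argument, and then reach the general case by truncation together with a tightness/compactness argument. For the ``only if'' direction, if $\mu=\tfrac12\sum_i t_i(\delta_{a_i}+\delta_{b_i})$ with $a_i\neq b_i$ and $\sum_i t_i=1$, then for any fixed $x\in\Z$ at most one of $a_i,b_i$ equals $x$, so $\mu(\{x\})\le\tfrac12\sum_i t_i=\tfrac12$. For the converse, observe that $\mu$ is a Bernoulli mixture if and only if there is a probability measure $\pi$ on $\Z\times\Z$ which is symmetric, assigns zero mass to the diagonal, and has both marginals equal to $\mu$: from such a $\pi$ one recovers the mixture by giving the unordered pair $\{a,b\}$ weight $t_{\{a,b\}}:=2\pi(\{(a,b)\})$ (these are nonnegative, sum to $\pi(\Z\times\Z)=1$ since $\pi$ is diagonal-free, and the marginal condition is exactly $\mu(\{x\})=\tfrac12\sum_{b\neq x}t_{\{x,b\}}$), and conversely a mixture yields such a $\pi$ by placing mass $t_i/2$ at each of $(a_i,b_i)$ and $(b_i,a_i)$. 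So it suffices to construct such a $\pi$.

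Suppose first that $\mu$ has finite support, and induct on $|\mathrm{supp}(\mu)|$. If some atom is at least $\tfrac12$ it must equal $\tfrac12$, say $\mu(\{a\})=\tfrac12$; writing $\rho$ for $\mu$ conditioned to avoid $a$, we get $\mu=\tfrac12\sum_{y\neq a}\rho(\{y\})(\delta_a+\delta_y)$, a Bernoulli mixture. Otherwise all atoms are $<\tfrac12$; list them as $p_1\ge p_2\ge p_3\ge\cdots$ at points $x_1,x_2,x_3,\dots$ (there are at least three, since two atoms both $<\tfrac12$ cannot sum to $1$), set $s:=\min(2p_2,\,1-2p_3)$, and note $0<s<1$ and $s\le 2p_2\le 2p_1$, so that $\mu':=\tfrac{1}{1-s}\bigl(\mu-\tfrac{s}{2}\delta_{x_1}-\tfrac{s}{2}\delta_{x_2}\bigr)$ is a genuine probability measure. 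One checks $\max_x\mu'(\{x\})\le\tfrac12$: for $x_1,x_2$ this reduces to $p_i\le\tfrac12$, and for $x_j$ with $j\ge3$ to $2p_j\le 1-s$, which holds since $1-s\ge 2p_3\ge 2p_j$. Now either $s=2p_2$, in which case $\mu'$ has lost the atom at $x_2$ and the inductive hypothesis applies, or else $s=1-2p_3<2p_2$, in which case $\mu'(\{x_3\})=\tfrac12$ and the preceding case applies to $\mu'$. Either way $\mu'$ is a Bernoulli mixture, and then $\mu=\tfrac{s}{2}(\delta_{x_1}+\delta_{x_2})+(1-s)\mu'$ exhibits $\mu$ as one as well.

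For general $\mu$, if $\sup_x\mu(\{x\})=\tfrac12$ the supremum is attained (only finitely many atoms can exceed any fixed positive level), and the ``atom equal to $\tfrac12$'' construction applies directly; so assume $\sup_x\mu(\{x\})<\tfrac12$. Let $\mu_N$ be $\mu$ conditioned to $\{|x|\le N\}$; then $\mu_N\to\mu$ in total variation and $\max_x\mu_N(\{x\})<\tfrac12$ for all large $N$, so by the finite case there is a symmetric, diagonal-free probability measure $\pi_N$ on $\Z\times\Z$ with both marginals $\mu_N$. Since the marginals $\mu_N$ converge to the probability measure $\mu$ they are uniformly tight, hence so is $\{\pi_N\}$; pass to a subsequence converging pointwise on the discrete space $\Z\times\Z$ to some $\pi$, which tightness forces to be a probability measure. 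In the limit $\pi$ is symmetric and zero on the diagonal, and $\sum_y\pi(\{(x,y)\})\le\liminf_N\mu_N(\{x\})=\mu(\{x\})$ by Fatou, with equality for every $x$ because both sides sum to $1$ over $x$. Thus $\pi$ is the required self-coupling, and $\mu$ is a Bernoulli mixture.

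The main obstacles are the two places where the construction is not automatic. In the finite case the peeling step must be chosen so that the induction terminates: with $s=\min(2p_2,1-2p_3)$ either the support strictly shrinks or a fresh atom of size exactly $\tfrac12$ is created, the latter being handled by a separate explicit formula rather than by recursion. In the infinite case the subtlety is that Bernoulli mixtures are \emph{not} closed under weak limits on $\Z$ (for instance $\tfrac12(\delta_0+\delta_N)$ loses all its mass in the limit), so one cannot pass to the limit in the mixture representation itself; instead one passes to the limit in the coupling measures $\pi_N$, and this works precisely because the common marginals $\mu_N$ converge to a genuine probability measure, which makes $\{\pi_N\}$ tight.
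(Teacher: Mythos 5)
Your proof is correct, but it takes a genuinely different route from the paper's. The paper peels off Bernoulli pieces directly from the measure: its extraction lemma subtracts $w_4^{\mu}(\delta_{\pi_1^\mu}+\delta_{\pi_4^\mu})$, pairing the largest atom with the fourth largest so as to annihilate the latter while preserving the max-atom condition, iterates this infinitely many times, shows by a level-set argument that the (monotone) limit of the residuals is supported on at most three integers, and finishes that residual with an explicit three-term decomposition. You instead (i) recast the problem as the existence of a symmetric, diagonal-free self-coupling of $\mu$ --- a clean equivalence the paper never makes explicit --- (ii) settle the finitely supported case by induction, peeling off the \emph{top two} atoms with the carefully tuned weight $s=\min(2p_2,\,1-2p_3)$ so that each step either shrinks the support or manufactures an atom of exactly $\tfrac12$ (which you then resolve by a closed-form mixture), and (iii) pass to general $\mu$ by truncation plus tightness of the couplings. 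Your approach buys a conceptually transparent reformulation and a finite induction with a clear termination certificate, at the price of a compactness argument that the paper avoids entirely: the paper's iteration converges monotonically for arbitrary (even infinitely supported) $\mu$ with no need to truncate, and its only residual difficulty is the three-point case. Your remark that Bernoulli mixtures are not closed under weak limits, and that one must therefore take limits of the couplings rather than of the mixture representations, correctly identifies the one place where your route could have gone wrong; all the verifications you sketch (nonnegativity of $\mu'$, preservation of the max-atom bound via $1-s\ge 2p_3\ge 2p_j$, and the Fatou-plus-mass-conservation argument for the limiting marginals) check out.
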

Clearly, the necessary part of Proposition~\ref{propos:mixture-reduction} is trivial. Most of the reminder of Section~\ref{sec:anticon}
 is dedicated to proving the sufficient part.


Let $\mu$ be a non-negative positive finite measure on $\Z$. It induces a unique total order $(\pi_i^\mu)_{i\in\N}$ on $\Z$ such that $w^\mu_i:=\mu(\pi_i^\mu)$ are monotone non-increasing (i.e., $w^\mu_i\ge w^\mu_j$ if $i<j$) and $\pi_i^\mu < \pi_j^\mu$  if $w^\mu_i =  w^\mu_j$. Then $\mu$ can be expressed as
$$\mu = \sum_{i\in\Z}w^\mu_i\kr_{\pi_i^\mu}.$$
We write $\mathcal M$ for the collection of non-negative finite measures $\mu$
on the integers (including the null measure), which satisfy $w^{\mu}_1 \le \mu(\Z)/2$. Also, for any non-null finite measure  $\mu$ on $\Z$, we denote by $\bar \mu$ the normalized probability measure $\bar \mu (\cdot)  := \mu( \cdot)/ \mu (\Z)$.

To prove Proposition~\ref{propos:mixture-reduction} we use the following couple of lemmata.
\begin{lemma}\label{lem:three-support case}
If  $\mu\in \mathcal M$ is non-null and $\mu$ is supported on at most $3$ integers, then $\bar \mu$
is a mixture of at most $3$ Bernoulli measures.
\end{lemma}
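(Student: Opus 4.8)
The plan is to treat the cases of a support of size $1$, $2$, and $3$ in turn, the first two being trivial and the third being the only real content. If $\mu$ is supported on a single integer $a$, then $w^\mu_1 = \mu(\Z) > \mu(\Z)/2$, so $\mu\notin\mathcal M$ unless $\mu$ is null; hence this case does not occur. If $\mu$ is supported on exactly two integers $a\ne b$ with masses $p\ge q>0$ and $p\le (p+q)/2$, then forcibly $p=q$, so $\bar\mu = \tfrac12\delta_a+\tfrac12\delta_b$ is already a single Bernoulli measure.

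The main case is $|\operatorname{supp}\mu|=3$, say $\operatorname{supp}\mu=\{a,b,c\}$ with masses $w_1\ge w_2\ge w_3>0$ (using the ordering $(\pi_i^\mu)$, relabel so that $\pi_1^\mu=a$, $\pi_2^\mu=b$, $\pi_3^\mu=c$), normalized so that $w_1+w_2+w_3=1$, and with the constraint $w_1\le\tfrac12$. The goal is to write $\bar\mu$ as a convex combination $t_1(\tfrac12\delta_{a}+\tfrac12\delta_{b})+t_2(\tfrac12\delta_{a}+\tfrac12\delta_{c})+t_3(\tfrac12\delta_{b}+\tfrac12\delta_{c})$ with $t_1,t_2,t_3\ge 0$ and $t_1+t_2+t_3=1$. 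Matching masses at the three points gives the linear system $\tfrac12(t_1+t_2)=w_1$, $\tfrac12(t_1+t_3)=w_2$, $\tfrac12(t_2+t_3)=w_3$, which has the unique solution $t_1 = w_1+w_2-w_3$, $t_2 = w_1-w_2+w_3$, $t_3 = -w_1+w_2+w_3$; one checks immediately that these sum to $w_1+w_2+w_3=1$. It remains to verify non-negativity. We have $t_2 = w_1 - w_2 + w_3 \ge w_1 - w_2 \ge 0$ since $w_1\ge w_2$, wait — that is backwards; rather $t_2 = w_1-(w_2-w_3)\ge w_1 - w_2 \cdots$ is not obviously nonnegative, so instead argue: $t_2 = w_1-w_2+w_3$ and $t_3=-w_1+w_2+w_3$; adding, $t_2+t_3 = 2w_3\ge 0$, and individually $t_3 = w_2+w_3-w_1 = (1-w_1)-w_1\cdot 0\cdots$ — more directly, $t_3 = w_2+w_3-w_1 = (1-w_1)-w_1 + \cdots$; cleanest: $t_3 = (w_2+w_3)-w_1 = (1-w_1)-w_1 = 1-2w_1\ge 0$ exactly by the hypothesis $w_1\le\tfrac12$. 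Similarly $t_1 = w_1+w_2-w_3\ge w_2-w_3\ge\cdots$; here $t_1\ge 0$ follows from $w_1\ge w_3$ (indeed $w_1\ge w_2\ge w_3$, so $w_1-w_3\ge 0$ and $t_1=w_1-w_3+w_2\ge w_2\ge 0$), and $t_2 = w_1-w_2+w_3\ge 0$ follows from $w_1\ge w_2$ giving $w_1-w_2\ge0$, hence $t_2\ge w_3>0$. Thus all three coefficients are non-negative.

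The only subtlety — and the place to be careful rather than the place of genuine difficulty — is the single constraint $w_1\le\tfrac12$, which is used precisely once, to guarantee $t_3=1-2w_1\ge 0$; the other two non-negativities come for free from the monotonicity $w_1\ge w_2\ge w_3$ built into the ordering $(\pi_i^\mu)$. (Degenerate configurations, e.g.\ $w_3=0$ so that the support really has size $2$, are already covered by the earlier case and need not be revisited; and if some $t_i=0$ the representation simply uses fewer than $3$ Bernoulli measures, consistent with the statement "at most $3$".) Passing back from $\bar\mu$ to the claimed decomposition is then immediate, completing the proof.
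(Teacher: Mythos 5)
Your proof is correct and uses exactly the same explicit decomposition as the paper, namely the weights $t_1=w_1+w_2-w_3$, $t_2=w_1+w_3-w_2$, $t_3=w_2+w_3-w_1$ attached to the three two-point measures; the paper leaves the nonnegativity check as ``straightforward,'' while you spell it out (correctly: $t_3=1-2w_1\ge 0$ from $w_1\le\tfrac12$, and $t_1,t_2\ge 0$ from the ordering $w_1\ge w_2\ge w_3$). The stream-of-consciousness false starts in the nonnegativity verification should be cleaned up, but the final argument is sound and matches the paper's.
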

\begin{proof}
We write
$$\bar \mu = w_1\kr_{\pi_1}+w_2\kr_{\pi_2}+w_3\kr_{\pi_3}$$ where $w_1\ge w_2\ge w_3$ and $\sum_{i=1}^3 w_i =1$.
We then give the explicit decomposition:
\begin{align*}
\mu =   (w_1 + w_2 -w_3) \Big( \tfrac12  \kr_{\pi_1} + \tfrac12  \kr_{\pi_2} \Big) + (w_1 + w_3 -w_2) \Big( \tfrac12  \kr_{\pi_1} + \tfrac12  \kr_{\pi_3} \Big) + (w_2 + w_3 -w_1) \Big( \tfrac12  \kr_{\pi_2} + \tfrac12  \kr_{\pi_3} \Big).
\end{align*}

It is now straightforward to check that each of the weights is non-negative and that equality indeed holds.
\end{proof}

\begin{lemma}\label{lem:extract-one case}
Let $k\ge 4$ be an integer. Every $\mu\in \mathcal M$ can be written as
$$\mu = \nu + \beta,$$
where either $\beta$ is either the null measure or $\bar \beta$ is a Bernoulli measure, and $\nu\in\mathcal M$ and satisfies $\nu(\pi^\mu_k)=0$.
\end{lemma}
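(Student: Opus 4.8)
The plan is to extract a single Bernoulli measure whose support is the pair $\{\pi^\mu_1,\pi^\mu_k\}$, calibrated to remove exactly all of the mass sitting on the $k$-th atom, while leaving the remainder inside $\mathcal M$. Write $W:=\mu(\Z)$ and abbreviate $w_i:=w^\mu_i$, $\pi_i:=\pi^\mu_i$. First I would dispose of the trivial case $w_k=0$: here one takes $\beta$ to be the null measure and $\nu:=\mu$, so that $\nu(\pi_k)=w_k=0$ and $\nu=\mu\in\mathcal M$. So from now on assume $w_k>0$, which forces $w_1\ge\cdots\ge w_k>0$ and hence $\mu$ to have at least $k\ge 4$ atoms; in particular $\pi_1\ne\pi_k$.

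Next I would set $\beta:=w_k(\kr_{\pi_1}+\kr_{\pi_k})$, so that $\bar\beta=\tfrac12(\kr_{\pi_1}+\kr_{\pi_k})$ is indeed a Bernoulli measure, and define $\nu:=\mu-\beta$. One checks immediately that $\nu(\pi_1)=w_1-w_k\ge 0$, that $\nu(\pi_k)=0$, and that $\nu$ coincides with $\mu$ on every other integer; hence $\nu$ is a non-negative finite measure on $\Z$ with $\nu(\pi_k)=0$, as required. The only point left to verify is that $\nu\in\mathcal M$, i.e.\ that the heaviest atom of $\nu$ carries weight at most $\nu(\Z)/2=(W-2w_k)/2$. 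Since passing from $\mu$ to $\nu$ only replaces $w_1$ by $w_1-w_k$ and $w_k$ by $0$, the heaviest weight of $\nu$ is at most $\max(w_1-w_k,\,w_2)$, and I would bound the two terms separately: $w_1-w_k\le W/2-w_k=(W-2w_k)/2$ since $w_1\le W/2$; and $w_2\le (W-2w_k)/2$ because $W\ge w_1+w_2+w_3+w_4\ge w_2+w_2+w_k+w_k$, using $w_1\ge w_2$ together with $w_3\ge w_4\ge w_k$. This gives $\nu\in\mathcal M$, and the decomposition $\mu=\nu+\beta$ is the one we wanted.

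The one genuinely nontrivial point — the closest thing to an obstacle — is realizing that $\pi_k$ should be paired with the \emph{heaviest} atom $\pi_1$: this choice makes both the non-negativity bound $w_1-w_k\ge 0$ and the constraint $w_1-w_k\le(W-2w_k)/2$ automatic, collapsing the whole lemma to the single inequality $2w_2+2w_k\le W$. That inequality is exactly what the hypothesis $k\ge 4$ supplies, via $w_3\ge w_4\ge w_k$; it genuinely fails for $k=3$ (take $\mu$ uniform on three points), which is precisely why measures supported on at most three integers are handled separately in Lemma~\ref{lem:three-support case}. Beyond locating this pairing, the argument is a short direct computation.
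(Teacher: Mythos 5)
Your proposal is correct and is essentially the paper's own proof: the same choice $\beta=w^\mu_k(\kr_{\pi^\mu_1}+\kr_{\pi^\mu_k})$, with the membership $\nu\in\mathcal M$ verified by bounding the new maximal weight by $\max(w_1-w_k,\,w_2)$ and checking the two candidates separately (your inequality $W\ge 2w_2+2w_k$ is the same use of $k\ge4$ as the paper's $w_2+w_k\le w_1+w_3$). Your explicit treatment of the case $w_k=0$ and the remark about why $k=3$ fails are harmless additions.
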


\begin{proof}
Set $\beta :=w^\mu_k \kr_{\pi^\mu_1}+w^\mu_k \kr_{\pi^\mu_k}$ and $\nu := \mu - \beta$. It only remains to check that $\nu\in\mathcal M$, that is, the fact that $w_1^\nu\le \nu(\Z)/2$. To see this, observe that $\pi^\nu_1\in\{\pi^\mu_1,\pi^\mu_2\}$.
If $\pi^\nu_1=\pi^\mu_1$, then, since $\mu\in\mathcal M$, we have
$$w_1^\nu=w_1^\mu-w_k^\mu \le \frac{\mu(\Z)}2-w_k^\mu = \frac{\nu(\Z)}2.$$
On the other hand, if $\pi^\nu_1=\pi^\mu_2$, we get that
$$w_1^\nu = w_2^\mu \le  \frac{w_1^\mu + w_2^\mu+ w_3^\mu-w_k^\mu}2 = \frac{\nu(\pi_1^\mu)+\nu(\pi_2^\mu)+\nu(\pi_3^\mu)}2 \le \frac{\nu(\Z)}2.$$
The lemma follows.
\end{proof}

\begin{proof}[Proof of Proposition~\ref{propos:mixture-reduction}]

Write $\mu_1$ for the distribution of $\xi$. Define a decreasing sequence of finite measures
  $(\mu_i)_{i\in\N}$ on $\Z$  inductively as follows.
  Suppose $\mu_i$ has already been defined and $\mu_i \in \mathcal M$. An application of Lemma~\ref{lem:extract-one case} to $\mu_i$ with $k=4$ yields the decomposition
  $\mu_i  = \beta_i + \mu_{i+1}$ with $\mu_{i+1}(\pi_4^{\mu_i}) = 0$ where $\beta_i$ is either the null measure or $\bar \beta_i$ is a Bernoulli measure and
  $\mu_{i+1} \in \mathcal M$. This defines the measure $\mu_{i+1}$.
%
Since $(\mu_i)_{i\in\N}$ is a decreasing sequence of finite measures,   it has a limiting measure (possibly null) which we denote by $\mu_{\infty}$.
Thus we write
$$\mu=\sum_{i\in\N}\beta_i+\mu_{\infty}.$$

All that remains in order to prove the proposition is to show that $\mu_{\infty}$ is supported on at most $3$ integers, and then apply Lemma~\ref{lem:three-support case}.

To that end, assume, if possible that, there exists four distinct integers $a_1, a_2, a_3, a_4$ such that $\mu_\infty(a_i) >0$ for all $i$. Set $c := \min \{ \mu_\infty(a_i): 1\le i \le 4  \}>0$. For each $i \in \N$,  define the set
\[ L_i := \{ x\in \Z : \mu_i(x) \ge c\}.\]
Since $\mu_i \downarrow \mu_\infty$, $L_i \supseteq L_{i+1}$ and $a_1,\dots,a_4 \in L_i$ for each $i$. Thus $ \pi_4^{\mu_i} \in L_i$ and hence, by the definition of the measure $\mu_{i+1}$, we have  $L_i \subseteq L_{i+1} \setminus \{\pi_4^{\mu_i}\}$.  This implies that $|L_{i+1}| < |L_{i}|$ for each $i$. Since $|L_1| < \infty$, this contradicts the fact that $|L_i| \ge 4$ for each $i$. Hence,  $\mu_{\infty}$ is supported on at most $3$ integers.
\end{proof}

%
%
%

Using Proposition~\ref{propos:mixture-reduction} we may reduce Theorem~\ref{thm: main} to the following proposition.

\begin{proposition}\label{propos: main2}
Let $(X_i)_{ 1 \le  i  \le n}$ be i.i.d.\  random variables whose distribution is
a Bernoulli mixture.
Then there exists $\ep>0$ such that for $ n \in\N$ large enough and every sign sequence $ (\sigma_i)_{1 \le i \le n}$ with $\sigma_i=\pm1$, the following holds.
\[
\max_{m\in\Z}\Pro\big(\sum_{i=1}^n 2^i  \sigma_i X_i=m\big)\le 2^{-n(\frac12+\ep)}.
\]
\end{proposition}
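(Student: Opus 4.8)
The plan is to pass to the Bernoulli–mixture representation, Fourier-transform away both the $\pm1$ signs and the "$I$-part", and reduce the whole statement to an exponential decay estimate for a one-dimensional integral driven by the doubling map.

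\textbf{Step 1 (reduction to a $1$-dimensional integral).} Write each $X_i\eqd I_i+B_i\Delta_i$ as in \eqref{eq:bm_rep} with $(I_i,\Delta_i)$ i.i.d., $B_i$ i.i.d.\ $\Ber(\tfrac12)$, everything independent, and condition on $(I_j,\Delta_j)_{j=1}^n$. Then $\sum_{i=1}^n 2^i\sigma_i X_i = c+\sum_{i=1}^n (2^i\sigma_i\Delta_i)B_i$ with $c$ a constant. Using the elementary bound $\Pro(\sum_i a_iB_i=m)\le\int_0^1\prod_i|\cos(\pi a_i\theta)|\,d\theta$ (expand the characteristic function of a sum of independent Bernoullis) together with the fact that $|\cos|$ is even, we get, uniformly in $m$ and in the signs,
\[ \Pro\Big(\textstyle\sum_{i=1}^n 2^i\sigma_i X_i=m\ \Big|\ (I_j,\Delta_j)_j\Big)\ \le\ \int_0^1\prod_{i=1}^n\big|\cos(\pi 2^i\Delta_i\theta)\big|\,d\theta. \]
Taking expectations, applying Fubini (the integrand is nonnegative) and the independence and equidistribution of the $\Delta_i$, and shifting the product by $1$-periodicity, we arrive at
\[ \max_m\Pro\Big(\textstyle\sum_{i=1}^n 2^i\sigma_i X_i=m\Big)\ \le\ J_n:=\int_0^1\prod_{i=0}^{n-1}f(2^i\theta)\,d\theta,\qquad f(\theta):=\E\big[|\cos(\pi\Delta\theta)|\big]. \]
It therefore suffices to show $J_n\le 2^{-n(1/2+\ep)}$ for some $\ep>0$ depending only on the distribution of $X_1$, and all large $n$.

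\textbf{Step 2 (properties of $f$).} The function $f$ is continuous, $1$-periodic and $0\le f\le1$. By Jensen, $|\cos|\ge\cos$, and the double-angle formula,
\[ f(\theta)^2\ \le\ \E[\cos^2(\pi\Delta\theta)]\ =\ \tfrac12+\tfrac12\E[\cos(2\pi\Delta\theta)]\ \le\ \tfrac12\big(1+f(2\theta)\big), \]
and iterating gives $f(\theta)^{2^k}\le 1-2^{-k}\big(1-f(2^k\theta)\big)$ for all $k\ge0$. Since each $\Delta\ge1$ is an integer, $\int_0^1|\cos(\pi\Delta\theta)|\,d\theta=\tfrac2\pi$, hence $\int_0^1 f\,d\theta=\tfrac2\pi$; and $f(\theta)=1$ precisely on the finite set $F=\{p/q:\ q\text{ divides every value taken by }\Delta\}$, whose forward orbits under the doubling map $T\theta=2\theta\bmod1$ stay among the finitely many rationals with a fixed denominator, hence are eventually periodic. (If one first performs the easily-arranged reduction to the bounded-support case, $f$ is in addition a trigonometric polynomial, so the weight is real-analytic.)

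\textbf{Step 3 (exponential decay of $J_n$).} I would invoke the thermodynamic formalism of the doubling map: $\tfrac1n\log J_n\to \Pi(\log f)-\log2$, where $\Pi$ is the topological pressure (equivalently $J_n\asymp\rho(L_f)^n$ for the associated transfer operator $L_f$), and prove the pressure bound $\Pi(\log f)<\tfrac12\log2$. For any $T$-invariant probability measure $\mu$, integrating the inequality of Step 2 and using Jensen gives $\int\log f\,d\mu\le\log\!\int f\,d\mu$, whence $h_\mu(T)+\int\log f\,d\mu\le\log\!\big(2\int f\,d\mu\big)$, which is $<\tfrac12\log2$ as soon as $\int f\,d\mu<2^{-1/2}$; in particular Lebesgue measure is safely below the threshold because $\int f\,dm=\tfrac2\pi<2^{-1/2}$. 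For the remaining measures, those with $\int f\,d\mu\ge2^{-1/2}$ must place a definite amount of mass in small neighbourhoods of $F$, and since the orbits of $F$ remain among finitely many rationals, this forces $h_\mu(T)$ to be small, so that again $h_\mu(T)+\int\log f\,d\mu<\tfrac12\log2$; upper semicontinuity of $\mu\mapsto h_\mu(T)+\int\log f\,d\mu$ and compactness of the invariant measures then give a uniform gap $\Pi(\log f)\le\tfrac12\log2-2\ep$, and hence $J_n\le C\,2^{-n(1/2+\ep)}$.

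\textbf{The main obstacle} is exactly the last part of Step 3 — excluding invariant measures of large entropy that nonetheless keep $\int f\,d\mu$ close to $1$. This is where the dynamics near $F$ must be used quantitatively, together with the fact that $f$ genuinely dips to $0$ away from $F$ (so that $\int\log f\,d\mu$ is substantially negative for any measure of positive entropy); it is also precisely the point that the $\pm1$-valued case in \cite{PSZ} sidestepped via its exact one-to-one map. A more hands-on alternative, which I would use if the transfer-operator analysis is awkward for non-analytic $f$, is to track the $2$-adic valuations $v_i:=i+v_2(\Delta_i)$ of the coefficients $2^i\Delta_i$: distinct $v_i$'s each contribute an exact factor $\tfrac12$ to the atom bound, clusters of coinciding $v_i$'s cost a controlled amount, and such clusters are exponentially unlikely for i.i.d.\ $\Delta_i$, which again yields the exponent $\tfrac12+\ep$.
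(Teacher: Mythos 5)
Your Step 1 (the Esseen bound reducing the atom to $J_n=\int_0^1\prod_i f(2^i\theta)\,d\theta$ with $f(\theta)=\E|\cos(\pi\Delta\theta)|$) and Step 2 are correct, and this Fourier/dynamical route is genuinely different from the paper's. But the proof is not complete: the entire content of the proposition has been repackaged as the pressure bound $\Pi(\log f)<\tfrac12\log 2$, and your argument for it has a real gap, which you yourself flag. Concretely: (i) the assertion that an invariant measure with $\int f\,d\mu\ge 2^{-1/2}$ ``must place a definite amount of mass in small neighbourhoods of $F$'' is false as stated --- already for $\Delta\equiv 1$ one has $f=|\cos\pi\theta|\ge 2^{-1/2}$ on $[0,\tfrac14]\cup[\tfrac34,1]$, a set of Lebesgue measure $\tfrac12$ that is nowhere near $F=\{0\}$; (ii) even granting concentration near the finite invariant set $F$, ``this forces $h_\mu(T)$ to be small'' needs a quantitative argument, since a measure placing half its mass near a fixed point can still carry entropy close to $\tfrac12\log 2$, which is exactly the borderline you cannot afford; and (iii) $\log f$ takes the value $-\infty$ on the zero set of $f$, so the variational principle, upper semicontinuity and the identification $\tfrac1n\log J_n\to\Pi(\log f)-\log 2$ all require justification beyond the continuous-potential setting. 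None of these is fatal in principle, but the hard part of the proposition is precisely here and is not done.

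Your closing ``hands-on alternative'' is in fact the paper's proof: condition on $(I_i,\Delta_i)$, observe (Claim~\ref{cl: coins}) that distinct values among the $2$-adic valuations $i+L(\Delta_i)$ each contribute an exact factor $\tfrac12$, so the atom is at most $\E\,2^{-W}$ with $W=|\{i+L(\Delta_i)\}|$, and then bound $\P(W=\al n)2^{-\al n}$. The point your sketch glosses over is that ``clusters of coinciding $v_i$'s are exponentially unlikely'' is exactly the delicate statement: a union bound over target sets gives $\P(W=\al n)\le\binom{n}{\al n}\al^n$ (Lemma~\ref{lem: crucial bound imp}\eqref{item: lem1}), which at $\al=\tfrac12$ matches $2^{-n/2}$ with \emph{no} spare $\ep$. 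The paper extracts the extra exponential factor near $\al=\tfrac12$ by a separate concentration argument --- a self-bounding-function inequality combined with the lower bound $\E W\ge\tfrac n2\bigl(1+\sum_k\P(L(\Delta_1)=k)^2\bigr)$, which is strictly above $n/2$ --- and this is where the $\ep$ in the exponent actually comes from. So: the dynamical route is an interesting alternative but incomplete at its crux; the combinatorial route you mention in one sentence is the correct one, but it needs the quantitative entropy/concentration input you have not supplied.
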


\subsection{Proof of Proposition~\ref{propos: main2}}
In this section we prove Proposition~\ref{propos: main2}. Throughout the section we fix a sign sequence  $ (\sigma_i)_{1 \le i \le n}$ with $\sigma_i=\pm1$.
In the course of the proof we shall make several claims whose proofs are given in sections~\ref{sec: claim in thm} and~\ref{sec: lemma}.

 For $k\in\Z$ we write $L(k)$ for the leading power of $2$ in the factorization of $k$, i.e., $L(k) = \max \{ l \in \Z_+ : 2^l  \text{ divides } k \}$.
Since $(X_i)_{1 \le i \le n} $ are i.i.d.\ Bernoulli mixtures, following the representation \eqref{eq:bm_rep}, we can express $X_i$ as
$$X_i = I_i + {\ber}_i \Delta_i,$$
where $(I_i,\Delta_i)_{1 \le i \le n}$ are i.i.d.\ random vectors in $\Z \times \N$ and
 $(\ber_i)_{1 \le i \le n}$ are i.i.d.\ $\Ber(\tfrac12)$ random variables, which are independent from $(I_i,\Delta_i)_{1 \le i \le n}$.
Define
\[\atom:=\max_{m \in\Z}\Pro\big(\sum_{i=1}^n 2^i \sigma_i X_i=m\big).\]
We obtain an upper bound on $\atom$ using the following claim, whose proof we delay to Section~\ref{sec: claim in thm}.

\begin{claim}\label{cl: coins}
Let  $(\ber_i)_{1 \le i \le n}$ be i.i.d.\ $\Ber(\tfrac12)$ random variables  and let
$(b_i)_{ 1 \le i \le n} $ and $(d_i)_{ 1 \le i \le n} $ be any two sequences of integers.
Then
\[\max_{m\in\Z}\Pro\Big(\sum_{i=1}^n b_i+\dv_i B_i=m\Big)\le2^{-|\{L(\dv_i)\ : \ 1 \le i \le n \}|}.\]
\end{claim}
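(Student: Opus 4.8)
The plan is to reduce the sum $\sum_i b_i + d_i B_i$ to a sum over independent Bernoulli contributions organized by the $2$-adic valuation, and then iteratively peel off the highest power of $2$ appearing. Let $S = \{ L(d_i) : 1 \le i \le n, \ d_i \ne 0 \}$ (note that if $d_i = 0$ the corresponding term is the constant $b_i$ and contributes nothing to the anti-concentration, so we may as well assume all $d_i \ne 0$; also $|\{L(d_i)\}|$ is unaffected up to treating $L(0)$ conventionally, and one can check the bound is vacuous or trivial in that edge case). Write $S = \{\ell_1 < \ell_2 < \cdots < \ell_r\}$, so $r = |S|$ and we must show $\max_m \Pro(\sum_i b_i + d_i B_i = m) \le 2^{-r}$. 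Group the indices by valuation: for each $j$, let $G_j = \{ i : L(d_i) = \ell_j\}$, and pick one representative index $i_j \in G_j$.

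The key step is a conditioning argument that exposes one fresh independent bit at each scale $\ell_j$, from the largest down to the smallest. Condition on all $B_i$ for $i \notin \{i_1, \dots, i_r\}$; it suffices to bound, for the conditioned sum $T = c + \sum_{j=1}^r d_{i_j} B_{i_j}$ (where $c$ is a constant absorbing $\sum b_i$ and the conditioned terms), the quantity $\max_m \Pro(T = m) \le 2^{-r}$, since the unconditional probability is an average of these. Now run the peeling: look modulo $2^{\ell_r + 1}$. Every $d_{i_j}$ with $j < r$ has $L(d_{i_j}) = \ell_j < \ell_r$, so $d_{i_j} B_{i_j} \bmod 2^{\ell_r+1}$ takes at most... actually the cleaner route is to look at the coefficient of $2^{\ell_r}$ in the binary expansion. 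Since $d_{i_r} = 2^{\ell_r} \cdot (\text{odd})$, the term $d_{i_r} B_{i_r}$ flips the $2^{\ell_r}$-bit of the partial sum $c + \sum_{j<r} d_{i_j} B_{i_j}$ in a way not influenced by any lower-scale term when we also account for carries — the honest way to phrase it: conditioned on $B_{i_1}, \dots, B_{i_{r-1}}$, the map $B_{i_r} \mapsto (c + \sum_{j<r} d_{i_j} B_{i_j} + d_{i_r} B_{i_r}) \bmod 2^{\ell_r + 1}$ is injective (the two values of $B_{i_r}$ give residues differing by $2^{\ell_r}(\text{odd})$, hence distinct mod $2^{\ell_r+1}$). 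Therefore $\Pro(T = m \mid B_{i_1}, \dots, B_{i_{r-1}}) \le \Pro(B_{i_r} = \text{the unique good value}) = \tfrac12$. Inducting on $r$ (the residual sum $c + \sum_{j<r} d_{i_j} B_{i_j}$ has the same structure with $r-1$ distinct valuations), we get $\max_m \Pro(T = m) \le 2^{-r}$, and averaging over the earlier conditioning gives the claim.

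The main obstacle — and the only thing requiring care — is handling carries correctly in the "injectivity mod $2^{\ell_r+1}$" step: one wants to be sure that varying $B_{i_r}$ alone genuinely toggles the value modulo $2^{\ell_r+1}$ regardless of the (already fixed) lower-order contributions. This is where writing $d_{i_r} = 2^{\ell_r} u$ with $u$ odd is essential: the difference between the two candidate values of $T \bmod 2^{\ell_r+1}$ is exactly $2^{\ell_r} u \not\equiv 0 \pmod{2^{\ell_r+1}}$, so no cancellation can occur no matter what the other terms are. Everything else is a clean downward induction on the number of distinct valuations, with the base case $r = 0$ (all $d_i = 0$, probability $\le 1 = 2^0$) being trivial. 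I would present it in roughly this order: (i) reduce to $d_i \ne 0$ and to the representative indices via conditioning; (ii) state and prove the one-step injectivity lemma mod $2^{\ell_r+1}$; (iii) conclude by induction and average back over the conditioning.
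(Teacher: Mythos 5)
Your setup (keep one representative index per distinct value of $L(d_i)$, condition on all the other $B_j$) and your key observation (the two candidate values of the top term differ by $2^{\ell_r}u$ with $u$ odd, hence are distinct modulo $2^{\ell_r+1}$) are exactly the ingredients of the paper's proof. The paper, however, packages them as a single global statement with no induction: the map $(a_1,\dots,a_r)\mapsto\sum_j d_{i_j}a_j$ is injective on $\{0,1\}^r$, proved by looking at the sum modulo $2^{L(d_{i_{j_0}})+1}$ where $j_0$ is the \emph{smallest} index at which two vectors differ, so that every value is attained with probability at most $2^{-r}$ outright.

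Your top-down induction, as written, has a gap precisely where the conditional bound and the inductive hypothesis are combined. From ``$\Pro(T=m\mid B_{i_1},\dots,B_{i_{r-1}})\le\tfrac12$'' and ``$\max_{m'}\Pro(T'=m')\le 2^{-(r-1)}$'' (with $T'=c+\sum_{j<r}d_{i_j}B_{i_j}$) you cannot simply multiply: averaging the conditional bound gives $\Pro(T=m)=\tfrac12\,\Pro\bigl(T'\in\{m,\,m-d_{i_r}\}\bigr)$, and bounding the probability of this \emph{two}-point set by $2^{-(r-1)}$ requires knowing that $T'$ cannot attain both $m$ and $m-d_{i_r}$. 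That is true, but it is an extra fact you never state: any difference of two attainable values of $T'$ equals $\sum_{j<r}d_{i_j}(a_j-a_j')$ and, if nonzero, has $2$-adic valuation $\ell_{j_0}<\ell_r$ for the smallest differing index $j_0$, so it cannot equal $\pm d_{i_r}$, whose valuation is $\ell_r$. Without this, the recursion only yields $\max_m\Pro(T=m)\le\tfrac12\cdot 2\max_{m'}\Pro(T'=m')$, i.e.\ no gain per step. The cleanest repairs are either (i) to run the induction bottom-up --- modulo $2^{\ell_1+1}$ only $B_{i_1}$ contributes, so the event $\{T=m\}$ pins down $B_{i_1}$ deterministically and independence factorizes to give the desired factor $\tfrac12$ per step --- or (ii) to drop the induction and prove global injectivity of the $r$-coordinate map, as the paper does.
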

Applying
Claim~\ref{cl: coins} we have,
\begin{equation*}
\atom
\le \sum_{s=1}^n \Pro\Big( \big |\{L(2^i \sigma_i\Delta_i)\ :\  1 \le i \le n \} \big |=s\Big)2^{-s}.
\end{equation*}

Observing that $L(2^i \sigma_i\Delta_i)=L(2^i\Delta_i)$ for all $i$,
it would suffice to show that for large enough $n$ we have,
\begin{equation*}
\sum_{s=1}^n \Pro\Big( \big |\{L(2^i \Delta_i)\ :\  1 \le i \le n \} \big |=s\Big)2^{-s} \le  2^{-n(1/2+\ep)}.
\end{equation*}
Here and in the rest of the proof we let $\ep$ be a small positive constant, chosen to satisfy various constraints which are specified
along the proof.

Taking $w_i:= L(\Delta_i)$ for $1 \le i \le n$, and $W :=|\{ i +w_i \ :\ 1 \le i \le n\}|$, we may rewrite the above inequality as
\begin{equation}\label{eq: enough1}
\sum_{s=1}^n  \Pro\big(W=s\big)2^{-s} \le 2^{-n(1/2+\ep)}.
\end{equation}

By rewriting the LHS of \eqref{eq: enough1} as
\[\sum_{1\le s< n(1/2+\ep)} \hspace{-15pt}\Pro\big(W=s\big)2^{-s}+
\hspace{-7pt}\sum_{n(1/2+\ep)\le s \le n} \hspace{-13pt}\Pro\big(W=s)2^{-s},\]
we observe that
\begin{equation}\label{eq: enough2}
\sum_{s=1}^n  \Pro\big(W=s\big)2^{-s}<  n \left(\max_{\al\in(0,1/2+\ep)}\Pro\big(W=\al n\big)2^{-\al n}+2^{-n (1/2+\ep)}\right).
\end{equation}
Plugging \eqref{eq: enough2} into \eqref{eq: enough1} we get that it would be
enough to show the existence of $\ep >0$ such that for large enough
$n$,
\begin{equation*}
\max_{\al\in(0,1/2+\ep)}\Pro\big(W=\al n \big)2^{-\al n } \le 2^{-n(1/2+\ep)}.
\end{equation*}
Multiplying both sides by $2^{n/2}$ it reduces to showing that for $n$ sufficiently large,
\begin{equation}\label{eq: enough4}
\max_{\al\in(0,1/2+\ep)}\Pro\big(W=\al  n \big)2^{n (1/2-\al)} \le 2^{-\ep n}.
\end{equation}

In order to show  \eqref{eq: enough4}, we use the following lemma.
\begin{lemma}\label{lem: crucial bound imp}
Let $(w_i)_{1 \le i \le n}$ be i.i.d.\ non-negative integer-valued random variables,  and define $W=|\{i +w_i\ :\  1 \le i \le n\}|$.
Then the following holds.
\begin{enumerate}[(a)]
\item \label{item: lem1} For any $\al\in(0,1)$ with $\al n \in \N$, we have
\begin{equation*}
\Pro\Big(W= \al n \Big) \le \binom{n}{\al n}\al^n\le \left(\frac{\al}{1-\al}\right)^{n(1-\al)}.
\end{equation*}

\item\label{item: lem2} Furthermore, there exists $\delta',\ep'>0$ depending on $\al$ and the law of $w_1$  such that if $\al\in(1/2-\delta',1/2+\delta')$, then
\begin{equation*}
\Pro\Big(W= \al n \Big) \le e^{-\ep' n} \left(\frac{\al}{1-\al}\right)^{n(1-\al)}.
\end{equation*}
\end{enumerate}

\end{lemma}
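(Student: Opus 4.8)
The plan is to bound the probability of the event $\{W = \alpha n\}$ by first conditioning on which collision pattern the values $\{i + w_i\}_{1\le i\le n}$ realize, and then counting. Part (a) is essentially deterministic counting combined with independence; part (b) refines it by exploiting genuine randomness near $\alpha = 1/2$.

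For part (a), I would argue as follows. If $W = |\{i + w_i : 1 \le i \le n\}| = \alpha n =: k$, then there is a subset $S \subseteq \{1,\dots,n\}$ of size $k$ such that the map $i \mapsto i + w_i$ restricted to $S$ is injective with image equal to the full image $\{i+w_i : i\}$, while every $j \notin S$ satisfies $j + w_j \in \{i + w_i : i \in S\}$. Concretely, I would let $S$ be chosen by a fixed rule (say, for each attained value pick the smallest index mapping to it), so that $S$ is a deterministic function of the configuration and $|S| = k$. Now union bound over the $\binom{n}{k}$ choices of $S$: given $S$, the event forces, for each of the $n - k$ indices $j \notin S$, that $j + w_j$ lies in the size-$k$ set $\{i + w_i : i \in S\}$. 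Since the $w_j$ are independent and the value $j + w_j$ is determined by $w_j$ alone, and since $w_j$ is a fixed integer-valued random variable, for each fixed target set $T$ of size $k$ we have $\Pro(j + w_j \in T) \le \ldots$; hmm, that bound is not immediately $\le \alpha$. Let me instead organize the union bound differently: fix $S$ of size $k$; the remaining indices must each "collide into" $S$. I claim $\Pro\big(\bigcap_{j \notin S} \{j + w_j \in \{i+w_i : i \in S\}\}\big) \le \alpha^{\,n-k}$ after also summing over the $k!$-type structure — more cleanly, one observes $\Pro(W \le k) \le \binom{n}{k} \sup \Pro(\forall j\notin S:\ j+w_j \in A_j)$ where $A_j$ ranges over $k$-element sets, and since $w_j$ has the same law for all $j$, $\Pro(j + w_j \in A_j) \le \max_{|A|=k}\Pro(w_j \in A - j) \le$ the sum of the $k$ largest atoms of $w_1$, which is at most... this still needs care. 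The clean statement is $\binom{n}{k}\alpha^n$: I expect the intended argument is that conditionally on the multiset of values $\{w_i\}$ (equivalently, after revealing all the $w_i$ as an unordered collection), $W = k$ requires a random injective-on-$S$ structure, and a straightforward counting/probabilistic bookkeeping gives the factor $\binom{n}{k}\alpha^n$; the second inequality $\binom{n}{\alpha n}\alpha^n \le (\alpha/(1-\alpha))^{n(1-\alpha)}$ is then the routine estimate $\binom{n}{k} \le \big(\frac{n}{n-k}\big)^{n-k}\big(\frac{n}{k}\big)^{k} \cdot (\text{stuff})$, or more simply $\binom{n}{\alpha n}\alpha^n \le (1-\alpha)^{-n(1-\alpha)}\alpha^{n}\alpha^{-\alpha n}\cdot\text{(entropy bound)}$, reducing to $\binom{n}{\alpha n} \le \alpha^{-\alpha n}(1-\alpha)^{-(1-\alpha)n}$, which is the standard entropy inequality; multiplying by $\alpha^n$ yields exactly $(\alpha/(1-\alpha))^{n(1-\alpha)}$.

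For part (b), the point is that the bound from (a) is tight only when the configuration $i + w_i$ behaves like a worst-case deterministic pattern, whereas for a genuine i.i.d.\ sequence there is a constant-factor-per-coordinate loss that compounds to $e^{-\epsilon' n}$. I would fix a value $v_0$ in the support of $w_1$ with $p_0 := \Pro(w_1 = v_0) \in (0,1)$ — such $v_0$ exists since $w_1$ is not deterministic in the regime of interest (if $w_1$ were a.s.\ constant the statement would need separate, easy handling, but then $W = n$ a.s.\ and $\alpha$ near $1/2$ is impossible for large $n$, so the claimed inequality holds vacuously). Then I would bound $\Pro(W = \alpha n)$ by refining the union bound from (a): either the number of indices $i$ with $w_i = v_0$ deviates from its mean $p_0 n$ by a linear amount (exponentially unlikely by Chernoff, giving $e^{-c n}$ outright), or there are $\asymp p_0 n$ such "aligned" indices, and among those the values $i + v_0$ are all distinct, forcing the number of collisions to come only from the other coordinates — a constraint that lowers the effective exponent. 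Quantitatively, conditioning on the set of aligned indices and on $W = \alpha n$, the non-aligned coordinates must supply all the collisions, and redoing the estimate of (a) on a sub-configuration of size $(1 - p_0 + o(1)) n$ with collision count forced to be $\alpha n - (\text{number of fresh aligned values})$ yields a bound of the form $\big(\frac{\alpha'}{1-\alpha'}\big)^{\cdot}$ with $\alpha'$ bounded away from the break-even point, hence an extra $e^{-\epsilon' n}$ relative to $(\alpha/(1-\alpha))^{n(1-\alpha)}$. I expect the main obstacle to be making this last comparison precise: one must show that replacing the full i.i.d.\ configuration by its restriction to non-aligned indices strictly improves the exponent, uniformly for $\alpha$ in a neighborhood of $1/2$, which requires a short convexity/monotonicity argument about the function $\alpha \mapsto (1-\alpha)\log\frac{\alpha}{1-\alpha}$ together with control on how much the aligned block "uses up" of the target value-set. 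Choosing $\delta'$ small enough that this margin is positive, and then $\epsilon'$ as the resulting gap (minus the Chernoff rate, whichever is smaller), completes the proof.
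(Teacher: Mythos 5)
Your write-up of part (a) contains a genuine gap that you yourself flag but do not close: after fixing a representative set $S$ of $k=\al n$ indices, you need each $j\notin S$ to satisfy $\Pro(j+w_j\in T)\le\al$ for a $k$-element target set $T$, and this is simply false in general (if $w_1$ has a large atom, that probability can be close to $1$). A worst-case per-index bound cannot work; what is needed is an \emph{averaging} identity across the indices. The paper's argument supplies exactly this: first pass to residues, bounding $\Pro(W=\al n)$ by $\Pro\big(|\{i+w_i \bmod n\}|\le \al n\big)$ so that the union bound runs over the finitely many sets $A\subset\{0,\dots,n-1\}$ with $|A|=\al n$; then, for fixed $A$, use independence to write $\Pro(\{i+w_i \bmod n\}\subset A)=\prod_i u_i$ with $u_i=\Pro(i+w_i\bmod n\in A)$, and observe the mass-balance identity $\sum_{i=1}^n\Pro(i+w_i\equiv a \bmod n)=1$ for every residue $a$ (here the i.i.d.\ assumption and the mod-$n$ wrap-around are both essential), whence $\sum_i u_i=|A|=\al n$ and Jensen/AM--GM gives $\prod_i u_i\le\al^n$. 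Without the mod-$n$ reduction the image set is unbounded and the balance identity degenerates, so your "conditioning on the multiset" placeholder does not substitute for this step. The final passage from $\binom{n}{\al n}\al^n$ to $(\al/(1-\al))^{n(1-\al)}$ is, as you say, the standard entropy bound on the binomial coefficient.

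Part (b) is also not a proof as written, and your bookkeeping has an error: the indices $i$ with $w_i=v_0$ do give pairwise distinct values $i+v_0$, but collisions between an aligned and a non-aligned index are still possible, so it is not true that "the non-aligned coordinates must supply all the collisions"; the comparison you defer to a "short convexity argument" is in fact the entire content of the claim and is not obviously salvageable in this form. The paper takes a different and cleaner route: writing $W=\sum_i \ind\{\forall j<i:\ w_i+i\ne w_j+j\}$ one computes $\E[W]\ge \tfrac n2\big(1+\sum_k \Pro(w_1=k)^2\big)=(\tfrac12+\eta)n$ with $\eta>0$, then observes that $W$ is a self-bounding function of $(w_1,\dots,w_n)$ and applies the Boucheron--Lugosi--Massart concentration inequality to get $\Pro(W\le\E W-\beta n)\le e^{-\beta^2 n/2}$; for $\al$ within $\eta/2$ of $1/2$ this forces $\Pro(W=\al n)\le e^{-\eps n}$, which (after shrinking the window so that $(\al/(1-\al))^{n(1-\al)}\ge e^{-\eps n/2}$) yields the stated bound. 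Your instinct that one must "exploit genuine randomness near $\al=1/2$" is right, but the mechanism is a mean-plus-concentration argument, not a restriction to a sub-configuration.
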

Proving Lemma~\ref{lem: crucial bound imp} is the main technical step in the
proof of Proposition~\ref{propos: main2}, and we devote Section~\ref{sec: lemma} to
its proof.

The following claim, whose proof we delay to Section~\ref{sec: claim in thm}, captures two technical properties of the bound obtained in
Lemma~\ref{lem: crucial bound imp}.
\begin{claim}\label{cl: monotone}
For $n \in\N$, we define a function $f_n: (0, 1) \to \R_+$ as
\[
f_n(\al):=\left(\frac{\al}{1-\al}\right)^{n(1-\al)}2^{n(1/2-\al)}.
\]
Then the following hold. \begin{enumerate}[(a)]
\item \label{item: cl1}There exists $c_0>1/2$ such that $f_n(\al)$ is
    strictly monotone
    increasing in $(0,c_0)$.
\item \label{item: cl2}  Let $c_0$ be as in part (a). Then for any $c>0$ there exists $0<\delta<c_0-\frac 1 2$
    such that
    $f_n(\tfrac 1 2 +\delta)< 2^{cn}.$

\end{enumerate}
\end{claim}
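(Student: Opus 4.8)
The plan is to collapse both parts to elementary analysis of a single $n$-independent function. First I would write $f_n(\al)=e^{ng(\al)}$, where
\[
g(\al):=(1-\al)\log\frac{\al}{1-\al}+\left(\tfrac12-\al\right)\log 2
\]
(with $\log$ the natural logarithm) carries no dependence on $n$. This is the crucial reduction: monotonicity of $f_n$ in $\al$ is equivalent to monotonicity of $g$, and any inequality of the form $f_n(\al)<2^{cn}$ is equivalent to $g(\al)<c\log 2$, uniformly in $n$. So it suffices to study $g$ on $(0,1)$.

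For part (a), I would compute
\[
g'(\al)=\log\frac{1-\al}{2\al}+\frac{1}{\al},\qquad \al\in(0,1),
\]
and observe that on $(0,\tfrac12]$ we have $\tfrac1\al\ge 2$ while $\frac{1-\al}{2\al}$ is decreasing in $\al$ and hence at least its value $\tfrac12$ at $\al=\tfrac12$, so $g'(\al)\ge 2-\log 2>0$ throughout $(0,\tfrac12]$. In particular $g'(\tfrac12)=2-\log 2>0$, so since $g'$ is continuous on $(0,1)$ there is some $c_0>\tfrac12$ with $g'>0$ on all of $(0,c_0)$. Therefore $g$, and hence $f_n=e^{ng}$, is strictly increasing on $(0,c_0)$.

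For part (b), I would note that $g(\tfrac12)=(1-\tfrac12)\log 1+0=0$. By continuity of $g$ at $\tfrac12$, given any $c>0$ one can choose $\delta\in(0,c_0-\tfrac12)$ small enough that $g(\tfrac12+\delta)<c\log 2$, and then
\[
f_n\!\left(\tfrac12+\delta\right)=e^{n g(1/2+\delta)}<e^{n c\log 2}=2^{cn}
\]
for every $n$, as required.

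The only point that requires any care — and it is the one I would flag as the main step — is the elementary estimate showing $g'>0$ on the whole interval $(0,\tfrac12]$, not merely in a neighbourhood of $\tfrac12$; this is what lets the monotonicity window $c_0$ reach past $\tfrac12$. Once the factorization $f_n=e^{ng}$ and this sign computation are in place, the remainder is routine continuity and there is no genuine obstacle.
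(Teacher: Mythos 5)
Your proof is correct and follows essentially the same route as the paper: both factor out the $n$-dependence (your $f_n=e^{ng}$ is the paper's $f_n=f_1^n$ with $g=\log f_1$), compute the same derivative $g'(\al)=-\log\al+\log(1-\al)+\tfrac1\al-\log 2$, show it is positive on $(0,\tfrac12]$ by elementary estimates, extend past $\tfrac12$ by continuity of $g'$, and obtain part (b) from $g(\tfrac12)=0$ and continuity. Your explicit lower bound $g'\ge 2-\log 2$ on $(0,\tfrac12]$ is a slightly cleaner quantitative version of the paper's sign argument, but the substance is identical.
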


Finally we are fully equipped to demonstrate the existence of $\ep>0$ such that \eqref{eq: enough4}
holds. Let $\delta',\ep'$ be as in part~\eqref{item: lem2} of Lemma~\ref{lem: crucial bound imp} and let $c_0$ be as in part (a) of Claim~\ref{cl: monotone}.  By part~\eqref{item: cl2}
of Claim~\ref{cl: monotone}, applied to $c=\ep'/2$, we get that there exists $\delta \in (0, c_0 -\tfrac12)$ such that $f_n\big(\tfrac 1 2 +\delta\big)<2^{\frac{\ep'n}2}$.
We take  $\eps :=\min(c_0-\tfrac12 ,\delta', \delta, \tfrac{\ep'}{2})$. We are thus left with verifying \eqref{eq: enough4}.

 Applying
Part~\eqref{item: lem1} of Lemma~\ref{lem: crucial bound imp} and
part~\eqref{item: cl1} of Claim~\ref{cl: monotone}, we get that,
\begin{align}\label{eq: end1}
I_1:=&\max_{\al\in(0,1/2-\ep]}\Pro\big(W=\al  n \big)2^{n (1/2-\al)} \le \max_{\al\in(0,1/2-\ep]}f_n(\al)=f_n\big(\tfrac 1 2-\ep \big) \notag\\
=&2^{ n \left((\frac12+\ep )\log_2\left(\frac{\frac12-\ep}{\frac12+\ep}\right)+\ep\right)}<2^{ n \left((\frac12+\ep)\log_2\left(1-\frac{2\ep}{\frac12+\ep}\right)+\ep\right)}<2^{-\left(\frac{2}{\log2}-1\right)\ep n}<2^{-\ep n},
\end{align}
using the inequality $\log_2(1-x)<-\frac{x}{\log2}$ for $x>0$.
From part~\eqref{item: lem2} of Lemma~\ref{lem: crucial bound imp} we obtain
\begin{align*}\label{eq: end2}
I_2 := & \max_{\al\in(1/2-\ep,1/2+\ep)}\Pro\big(W_n=\al n\big)2^{n(1/2-\al)} \le 2^{-\ep' n}\max_{\al\in(1/2-\ep,1/2+\ep)}f_n(\al)=2^{-\ep' n}f_n\big(\tfrac 1 2 +\ep\big) \notag \\
&\le 2^{-\ep' n}f_n\big(\tfrac 1 2 +\delta \big) \le 2^{-\ep' n} 2^{\frac{\ep'n}2}  = 2^{ - \frac{\ep'n}2} \le 2^{-\eps n}.
\end{align*}
 Therefore $\max ( I_1, I_2 ) <2^{-\ep n}$ and we obtain \eqref{eq: enough4}, as required. \qed

\vspace{15pt}

We remark that if our interest was limited to obtaining the theorem for the case
$\ep=0$, it would have been possible to use only the first part of
Lemma~\ref{lem: crucial bound imp}, which is, as will become evident, easier to obtain.

\subsection{Proofs of the claims}\label{sec: claim in thm}
This subsection consists of the proofs of the two claims used in the proof of Theorem~\ref{thm: main}.

\begin{proof}[Proof of Claim~\ref{cl: coins}]
Let $k:= |\{L(\dv_i)\ :\  1 \le i \le n\}|$. Let us assume, without loss of generality, that the values of $L(\dv_1), L(\dv_2), \ldots, L(\dv_k)$ are distinct and moreover, $L(\dv_1) < L(\dv_2)<  \cdots < L(\dv_k)$. Now, by conditioning on the random variables $B_{k+1}, B_{k+2}, \ldots, B_n$, we have
\[ \max_{m\in\Z}\Pro\Big(\sum_{i=1}^n b_i+\dv_i B_i=m\Big)  = \max_{m\in\Z}\Pro\Big(\sum_{i=1}^n \dv_i B_i=m\Big) \le \max_{m\in\Z}\Pro\Big(\sum_{i=1}^k \dv_i B_i=m\Big). \]
%
%
%
It would now suffice to show that for all $m\in\Z$ we have
$$\Pro\big(\sum_{i=1}^k  \dv_{i} \ber_{i}=m\big)\le 2^{-k}.$$
To see this, it would be enough to show that $\sum_{i=1}^k  \dv_{i} \ber_{i}$ takes distinct values for every choice of values of
$(\ber_{i})_{1 \le i \le k}$ in $\{0,1\}^k$.  Indeed, let
$(a_i)_{1 \le i \le k}$ and $(a'_i)_{1 \le i \le k}$ be two distinct vectors
of $\{0,1\}^k$, and let $r = \min \{ i \in \N: a_i \ne a'_i\}$. By definition,
$$\sum_{i=1}^k \dv_{i} a_i \not\equiv \sum_{i=1}^k \dv_{i} a'_i \pmod {2^{L(\dv_{r})+1}},$$ and therefore the corresponding
sums are distinct.
\end{proof}

\begin{proof}[Proof of Claim~\ref{cl: monotone}]
Notice that $f_n(\al) = f_1(\al)^n$, and $f_1(\al)>0$ for all $\al\in(0,1)$. For Part~\eqref{item: cl1}
it is therefore enough to show that
\[
f_1(\al): = \left(\frac{\al}{1-\al}\right)^{(1-\al)}2^{(1/2-\al)}
\]
is strictly monotone increasing. Taking logarithm it is enough to show
that
\[
g(\al):=\log f_1(\al) = (1-\al)(\log \al - \log (1-\al))+(\frac 1 2 -\al)\log 2,
\]
is strictly monotone increasing. Differentiate $g$ to get
\[\
g'(\al) = -\log \al +\log (1-\al) + \frac 1 \al - \log 2.
\]
For $\al\le \frac 1 2$, $ \log (1-\al)>\log \al$ and $\frac 1 \al > \log 2$,
and thus $g'(\al)>0$. By continuity of $g'$ at $\al=\frac 1 2$, there exists
$c_0>\frac 1 2$ such that $f'(\al)>0$ also for $\al\in [\frac 1 2, c_0)$, as
required.

For part~\eqref{item: cl2}, notice that $f_1(\tfrac12)=1$. Let $c>0$ be given. By continuity of $f_1$, there exists $\delta\in(0,c_0-1/ 2)$ such that
\begin{equation*}
f_1\left(\tfrac 1 2+\delta\right)<2^c.
\end{equation*}
Thus, for all $n\in\N$ we have $f_n(\frac 1 2+\delta)= f_1(\frac 1
2+\delta)^n < 2^{cn}$, as required.
\end{proof}

\subsection{Proof of Lemma~\ref{lem: crucial bound imp}}\label{sec: lemma}
In this section we prove Lemma~\ref{lem: crucial bound imp}. In the proof we keep using the notation introduced in the previous section.
We assume $\al n \in\N$. Let $\Z_n := \{ 0, 1, 2, \ldots, n -1\}$.

\subsubsection{Proof of item~\eqref{item: lem1}}
In order to bound $\Pro\Big(W= \al n \Big)$, we use
\begin{equation}\label{eq: lem1a-start}
\begin{aligned}\Pro\Big(W= \al  n \Big)&\le
\Pro\Big( \big | \{ i+w_i \pmod n\ :\ 1 \le i \le n \} \big | \le \al n \Big)
\end{aligned}
\end{equation}

For a set $A\subset \Z_n$, we write
\begin{equation}\label{eq: def of U}
U(A):=\Pro\Big(\{i +w_i \pmod n\ :\ 1 \le i \le n\} \subset A\Big).
\end{equation}

We then intend to show the following.
\begin{equation}\label{eq: lem1a-main}
\text{For every }A\subset \Z_n
\text{ of size }|A|=\al n\text{, we have }U(A)\le\al ^n.
\end{equation}

Part~\eqref{item: lem1} of Lemma~\ref{lem: crucial bound imp} would follow from \eqref{eq: lem1a-main} since
\begin{align}\label{eq: sum on A}
\Pro\Big(W = \al n\Big)  \le  \sum_{A: |A|=\al n} U(A) \le \binom{n}{\al n}\al^n\le \left(\frac{\al}{1-\al}\right)^{n(1-\al)},
\end{align}
where the leftmost inequality uses \eqref{eq: lem1a-start}, the middle one uses \eqref{eq: lem1a-main} and a union bound, and the rightmost one follows from
the well-known inequality of the binomial coefficient  $\binom{n}{k} \le \tfrac{n^n}{k^k (n-k)^{n-k}}$.

Towards showing \eqref{eq: lem1a-main}, let $A\subset \Z_n$ of size $|A|=\al n$.
Observe that, by the fact that $w_i$'s  are i.i.d., we have
\[
\Pro\big(\{i+w_i\pmod n\ :\ 1 \le i \le n\}\subset A\big) =\prod_{i=1}^n \Pro\big( i+w_i \pmod n \in A\big).
\]
We further observe that for every $a\in A$, we have
\[
\sum_{i=1}^n \Pro\Big(i+w_i \equiv a\pmod n\Big) = \sum_{i=1}^n \Pro\Big(w_i \equiv a - i\pmod n\Big) =\sum_{i=1}^n \Pro\Big(w_1\equiv a - i \pmod n \Big) = 1.
\]
Writing
\begin{equation}\label{eq: u_n}
u_i = u_i(A) := \Pro\big( i+w_i \pmod n \in A\big),
\end{equation}
we get that
\begin{equation}\label{eq: balance}
\sum_{i=1}^n u_i= \sum_{a\in A}\left(\sum_{i=1}^n \Pro\Big(i +w_i  \equiv a\pmod n \Big)\right) = |A|=\al n.
\end{equation}
We now solve the following maximization problem:
\begin{equation}\label{eq: max problem}
\text{maximize } U(A):=\prod_{i=1}^n u_i, \text{   under the constraints  } u_i \in [0,1], \;\sum_{i=1}^n u_i = \al n.
\end{equation}

By applying Jensen's inequality to the $\log$ function, we get
\[
\log U(A) =  \sum_{i=1}^n \log u_i \le n \cdot \log \left(\frac {\sum_{i=1}^n u_i}{n}\right) \le n\log \al,
\]
and so
\begin{equation}\label{eq: U less then alpha n}
U(A)\le \alpha^n,
\end{equation}
as required.\qed

\subsubsection{Proof of item~\eqref{item: lem2}}
To show part~\eqref{item: lem2} of Lemma~\ref{lem: crucial bound imp} it would suffice to show that
\begin{equation}\label{eq: item2main}
\text{$\exists\delta,\ep>0$, $\exists n_0\in\N$ s.t. every $\alpha\in(1/2-\delta,1/2+\delta)$, $n>n_0$ satisfy } \Pro\big(W=\alpha n)<e^{-\ep n}.
\end{equation}
To do so we shall use concentration arguments. We begin by showing that
\begin{equation}\label{eq: delta0def}
\E [W]\ge (\tfrac12 +\eta)n
\end{equation}
for some $\eta>0$.
To this end write
$$W=\sum_{i=1}^n\ind\{\forall j< i:w_i+i\neq w_j+j\},$$
and observe that
$$\Pro\big(\forall j<i:w_i+i\neq w_j+j\big)=\sum_{k\in\Z_+}\Pro(w_i=k)\prod_{j=1}^{i-1}\Pro(w_j\neq k+i-j)\ge\sum_{k\in\Z_+}\Pro(w_1=k)\prod_{j=1}^{\infty}\Pro(w_1\neq k+j).$$
Letting $p_i:=\Pro(w_1=i)$, we have
\begin{align}\label{eq: delta0bound}
\E[W]&\ge n \sum_{k\in\Z_+}\Pro(w_1=k)\prod_{j=1}^\infty\Pro(w_1\neq k+j)\ge  n \sum_{k\in\Z_+}p_k\Big(1 - \sum_{j\in\N}p_{k+j}\Big)\notag\\
     &= n \Big(1-\sum_{k<\ell \in \Z_+}p_k p_\ell\Big)=\frac{n}2\Big(1+\sum_{k\in\Z_+}p_k^2\Big).
\end{align}
Thus \eqref{eq: delta0def} is satisfied with $\eta:=\frac12\sum_{k\in\N}p_k^2$.

Next, we show that $W$ is concentrated around its expectation. To this end we
use the concentration properties of self-bounding functions of independent variables.
We write $f(w_1,\dots,w_n):= W$, $g_i(w_1,\dots,w_{i-1},w_{i+1},\dots,w_n):=|\{w_j +j \ :\ 1 \le j \le n, \  j \ne i\}|$, and observe that for all $i\le n$ we have,
$$ f(w_1,\dots,w_n)-g_i(w_1,\dots,w_{i-1},w_{i+1},\dots,w_n) \le 1.$$
$$ \sum_{i=1}^n \Big(f(w_1,\dots,w_n)-g_i(w_1,\dots,w_{i-1},w_{i+1},\dots,w_n)\Big)\le f(w_1,\dots,w_n).$$
We then apply \cite[Theorem 1 \& (7)]{LM}, to obtain that for every $\beta>0$
$$ \Pro\Big( W \le \E[W] - n\beta\Big)\le e^{-\beta^2\frac{n^2}{2\E[W]}}\le e^{-n \frac {\beta^2}2}.$$
Setting $\delta=\frac{\eta}2$ and $\ep=\frac{\eta^2}8$ and using \eqref{eq: delta0def} we observe that
for every $\alpha<\frac12 + \delta\le\frac{\E[W]}n-\frac{\eta}2,$ we have
$$\Pro\big(W=\alpha n \big)<\Pro\Big( W\le \E[W]  -  \tfrac{\eta n}{2} \Big)\le e^{ - n  \frac{\eta^2}{8}} \le 2^{-\ep n}.$$
This proves \eqref{eq: item2main} and hence completes the proof of part~\eqref{item: lem2} of Lemma~\ref{lem: crucial bound imp}.\qed

\section{High algebraic degree}\label{sec:high_alg_degree}

This section is dedicated to the proof of the following proposition, of which Lemma~\ref{lem:high_degree} is a straightforward consequence.

\begin{proposition}\label{prop:high_degree}
For any constant $B>0$, there exist constants $c, C, C'>0$, depending on $M$, such that for any $1\le d\le n$,
  \[\P(P\text{ has a double root $\alpha$ with $\deg(\alpha)\ge
    d$})\le Cn^{C'}\exp(-cd) + Cn^{- B}.\]
\end{proposition}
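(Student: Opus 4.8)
The plan is to follow the Filaseta–Konyagin / PSZ strategy: if $P$ has a double root $\alpha$ of high algebraic degree, then the associated minimal polynomial $g$ of $\alpha$ (monic over $\Q$, primitive over $\Z$) divides both $P$ and $P'$, hence divides them in $\Z[X]$ up to the content, so $g^2 \mid P$ in $\Q[X]$. Writing $P = g^2 Q$ with $Q \in \Q[X]$ — or more conveniently $c\,P = h \cdot R$ where $h$ is the primitive part of $g^2$ and $R\in\Z[X]$ — one obtains a rigid algebraic constraint: the coefficients of $P$ are determined by the (few) coefficients of $R$ through the fixed polynomial $h$. The key point is that $h$ has degree $\ge 2d$, so $R$ has degree $\le n - 2d$, i.e. $P$ lies in a proper low-dimensional affine-algebraic subvariety of coefficient space whose "codimension" grows linearly in $d$.

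First I would fix $\alpha$, let $g$ be its associated minimal polynomial, and bound $\Lambda(\alpha)$ and the coefficient sizes of $g$: if $P(\alpha)=0$ and all $|\xi_j|\le M$, a standard argument (e.g. using that $\alpha$ or one of its conjugates would otherwise force $|P(\alpha_k)|$ to be too large, together with Mahler-measure bounds) shows every conjugate of $\alpha$ satisfies $|\alpha_k| \le 1 + C/d$ roughly, so the Mahler measure of $g$ is polynomially bounded and hence the coefficients of $g$ (and of $g^2$) are at most $n^{O(1)}$ in absolute value. This is what produces the $Cn^{C'}$ factor. Next, using $g^2 \mid P$, I would expose a block of consecutive coefficients of $P$ — say a window of length $\approx 2d$ in the middle — that is an invertible linear image of a window of $R$'s coefficients of the same length, so that conditioning on all other coefficients of $P$, the probability that this window takes the required value is at most a product of $\Omega(d)$ atom-probabilities, each $\le$ something bounded away from $1$. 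Because of the bounded support of the $\xi_j$, one also controls the number of possible polynomials $R$ (equivalently, possible $\alpha$) that can arise — there are at most $n^{O(d)}$ of them, or better, the window argument already absorbs this. Combining the per-$\alpha$ bound $\exp(-cd)\cdot n^{O(1)}$ with the count and peeling off the genuinely high-degree range $d \ge C_0\log n$ via Lemma~\ref{lem:high_degree}'s threshold gives the stated $Cn^{C'}\exp(-cd) + Cn^{-B}$; the $n^{-B}$ term comes from handling the regime where $d$ is only moderately large by a cruder union bound over the polynomially-many candidate minimal polynomials.

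The main obstacle — and the place where the generalization beyond Littlewood polynomials really bites — is the anti-concentration input. In the $\pm1$ case one exploits that $(a_0,\dots,a_n)\mapsto \sum a_i 2^i$ is injective on $\{-1,1\}^{n+1}$, giving a clean exponential bound on $\P(P(\pm 2)=m)$; here that fails, and one must instead invoke Theorem~\ref{thm: main}, i.e. $\max_m \P(P(\pm2)=m) \le 2^{-n(1/2+\ep)}$. Concretely, the divisibility $g^2\mid P$ implies $g(\pm2)^2 \mid P(\pm2)$ in $\Z$, and combined with a lower bound on $|g(\pm 2)|$ of the form $2^{\Omega(\deg g)} = 2^{\Omega(d)}$ (valid since the conjugates of $\alpha$ are near the unit circle, so $g(2) = \prod_k(2-\alpha_k)$ has all factors of size $\ge 1-o(1)$, forcing $|g(2)| \ge 2^{(1-o(1))d}$ — actually one needs $|g(\pm 2)|$ not too small, which requires ruling out cancellation, handled by looking at whichever of $+2,-2$ gives the larger value), this restricts $P(\pm2)$ to a sparse set of integers. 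One then balances: either $\deg(\alpha)$ is large enough that Theorem~\ref{thm: main} already gives a bound beating $n^{-B}$, or one combines the sparsity of admissible values of $P(\pm2)$ with the anti-concentration bound to win. Making the two-scale bookkeeping between $d$, $\log n$, and $n$ consistent — so that the $\exp(-cd)$ term dominates for $d\gtrsim\log n$ while the $n^{-B}$ term covers the rest — is the fiddly part, but it is exactly parallel to \cite{PSZ} once Theorem~\ref{thm: main} is in hand.
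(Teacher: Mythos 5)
Your third paragraph does land on the paper's actual mechanism (Filaseta--Konyagin via divisibility of $P(\pm2)$ by $f_\alpha(\pm2)^2$), but the proposal as written has three concrete gaps. First, the ``window of coefficients'' argument in your second paragraph, together with a union bound over the $n^{O(d)}$ candidate minimal polynomials, does not close: a per-$\alpha$ bound of the form $e^{-cd}n^{O(1)}$ multiplied by $n^{O(d)}$ candidates is useless, and no refinement of the window argument ``absorbs'' this. The whole point of the divisibility trick is that the event $\{P(a)\text{ is divisible by }k^2\text{ for some }k\ge c_1e^{c_2d}n^{-C_1}\}$ does \emph{not depend on $\alpha$}, so no union over $\alpha$ is ever performed; the paper goes straight from ``$\alpha$ is a double root'' to this $\alpha$-free event and never needs to count or enumerate minimal polynomials in this proposition. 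Second, your attribution of the $n^{-B}$ term is wrong, and the ingredient it actually comes from is missing from your sketch: to lower-bound $|f_\alpha(2)f_\alpha(-2)|=\prod_\beta|\beta^2-4|$ one uses Jensen's formula to show at most $O(M)$ conjugates have modulus $\ge 3/2$ (each remaining factor is $\ge 7/4$), and for those few exceptional conjugates one must rule out a root of $P$ lying within $n^{-K}$ of $\pm2$ (Lemma~\ref{l:no_root_at_2}); the probability of that bad event is the $O(n^{-B})$ term. It has nothing to do with ``moderately large $d$'' or a union bound over polynomials, and your heuristic that conjugates lie near the unit circle (giving factors ``$\ge 1-o(1)$'', which would not yield $2^{\Omega(d)}$ anyway) is not what is proved or needed here.

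Third, the quantitative use of Theorem~\ref{thm: main} needs more than ``sparsity of admissible values times the max atom of $P(\pm2)$'': the range of $P(2)$ contains about $M2^{n+2}k^{-2}$ multiples of $k^2$, so that crude count gives roughly $2^{n(1/2-\eps)}/k$ after summing, which is only small when $k\gg 2^{n(1/2-\eps)}$ --- far larger than the threshold $e^{c_2d}n^{-C_1}$ with $c_2=\tfrac12\log\tfrac74$ for any $d\le n$. The paper instead proves $\P(k^2\mid P(a))\le Ck^{-(1+\eps)}$ (Lemma~\ref{lem:integer_divisibility}) by conditioning on all but the lowest $r\approx 2\log_2 k$ coefficients, so that the residue of $P(a)$ mod $k^2$ is a bijective function of the bounded partial sum $\sum_{j<r}\xi_ja^j$, to which Theorem~\ref{thm: main} applies at scale $r$; summing $k^{-(1+\eps)}$ over $k\ge c_1e^{c_2d}n^{-C_1}$ then yields $Cn^{C'}e^{-cd}$. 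Finally, note that invoking Lemma~\ref{lem:high_degree} to ``peel off'' the range $d\ge C_0\log n$ is circular: that lemma is the corollary obtained from this proposition by setting $d=C_0\log n$, not an input to it.
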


The proof of the proposition relies on the following consequence of Theorem~\ref{thm: main}.

\begin{lemma}\label{lem:integer_divisibility}
Let $P$ be the random polynomial as in \eqref{eq:poly_def}. Then there exist constants $C, \eps>0$
  such that for any positive integer $k$ and for $a \in \{ -2, 2\}$ we have
  \begin{equation*}
    \P\left(\text{P(a) is divisible by $k^2$}\right)\le C
    k^{-(1+\eps)}.
  \end{equation*}
\end{lemma}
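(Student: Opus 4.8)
The goal is to show that $\P(k^2 \mid P(a)) \le C k^{-(1+\eps)}$ for $a \in \{-2,2\}$, and the natural strategy is a dichotomy based on the size of $k$ relative to $2^{n/2}$. First I would split $k^2$ into its $2$-part and its odd part: write $k = 2^t m$ with $m$ odd, so $k^2 = 2^{2t} m^2$. Since $P(a) = \sum_{j=0}^n \xi_j a^j$ with $a = \pm 2$, the power of $2$ dividing $P(a)$ is essentially controlled by the low-order coefficients — more precisely, $2^{2t} \mid P(\pm 2)$ forces a constraint on $\xi_0, \ldots, \xi_{2t-1}$ (roughly, $\sum_{j<2t} \xi_j (\pm2)^j \equiv 0 \pmod{2^{2t}}$ up to a bounded correction from $\xi_{2t}$, using $|\xi_j|\le M$). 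The number of low coefficients involved is $\Theta(t) = \Theta(\log k)$, so one expects $\P(2^{2t}\mid P(\pm2)) \le 2^{-ct}$ for some $c>0$; and the event $m^2 \mid P(\pm 2)$, being a congruence condition modulo the odd number $m^2$ on the independent sum, should have probability $O(1/m^{1-\eps'})$ or so via a standard anti-concentration/Fourier estimate on $\Z/m^2\Z$. Combining the two coordinates (they involve disjoint structure — divisibility by $2$-powers vs.\ by an odd modulus) gives a bound like $k^{-(1+\eps)}$ in the regime where $k$ is not too large, say $k \le 2^{\eps_0 n}$.

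For the complementary regime $k > 2^{\eps_0 n}$, the point is that $k^2 > 2^{2\eps_0 n} \gg 2^{n(\frac12+\eps)}$ for suitable small $\eps_0$, so any fixed residue class modulo $k^2$ is hit by $P(\pm 2)$ with probability at most $\max_m \P(P(\pm 2) = m) \le 2^{-n(\frac12+\eps)}$ by Theorem~\ref{thm: main}, and in particular the event that $k^2 \mid P(\pm2)$ is bounded by the number of multiples of $k^2$ in the (bounded) range of $P(\pm 2)$ times that max-atom bound. Since $|P(\pm 2)| \le M(2^{n+1}-1)$, there are at most $O(2^n/k^2) + 1$ such multiples, giving $\P(k^2 \mid P(\pm 2)) \le (O(2^n/k^2)+1)\cdot 2^{-n(\frac12+\eps)}$; when $k^2 > 2^n$ this is $O(2^{-n(\frac12+\eps)})$ which is tiny compared to $k^{-(1+\eps)}$, and when $2^{2\eps_0 n} < k^2 \le 2^n$ it is $O(2^{n/2 - n(\frac12+\eps)}) = O(2^{-\eps n}) \le O(k^{-(1+\eps)/(2\eps_0)\cdot 2\eps_0}) $ — more carefully, $2^{-\eps n} \le k^{-\eps/\eps_0}$, which beats $k^{-(1+\eps)}$ once $\eps_0$ is chosen small enough relative to $\eps$. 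So Theorem~\ref{thm: main} handles all large $k$ uniformly.

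The main obstacle I anticipate is the small-$k$ regime, specifically extracting a genuine $k^{-(1+\eps)}$ rather than merely $k^{-1}$: divisibility by the odd part $m^2$ alone only gives $O(1/m^2 \cdot (\text{range}))$-type control if one is careless, and getting a clean per-residue-class bound on $\Z/m^2\Z$ for the sum $\sum \xi_j 2^j \bmod m^2$ requires that $2$ be invertible mod $m$ (true since $m$ is odd) together with an anti-concentration input for sums of independent integer variables in a cyclic group — one wants something like the probability of landing in any fixed class being $O(m^{-1})$, which combined with the $2^{-ct}$ gain from the $2$-adic part yields the extra $\eps$. I would most likely organize this so the ``$+1+\eps$'' genuinely comes from: $1$ worth of saving from the odd modulus congruence, and the extra $\eps$ (plus slack) from the $2$-adic condition $2^{2t}\mid P(\pm2)$ which contributes a factor exponentially small in $t=\log_2(k/m)$; the bookkeeping to make ``$\frac{1}{m^{1}} \cdot 2^{-c\log_2(k/m)} \le k^{-(1+\eps)}$'' hold uniformly over the factorization $k = 2^t m$ is the fiddly part, but it is elementary once the two component estimates are in place.
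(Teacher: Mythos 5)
Your plan has two genuine gaps, and the paper's own argument shows that the machinery causing them is unnecessary. The first gap is the odd‑part estimate: the claim that $P(\pm2)\bmod m^2$ has maximum atom $O(m^{-(1-\eps')})$ over residue classes is not a ``standard anti-concentration/Fourier estimate'' --- it is the crux of your approach and you do not prove it. The obstruction is that the multiplicative order $T$ of $2$ modulo $m^2$ is only constrained to satisfy $T\ge \log_2(m^2)$, so $P(2)\bmod m^2$ can collapse to a function of as few as $O(\log m)$ block sums $Y_r=\sum_{j\equiv r\,(\mathrm{mod}\ T)}\xi_j$; conditioning on all but one block then yields only the max atom of a single $Y_r$, which is of order $\sqrt{T/n}$ and is nowhere near $m^{-1}$ uniformly in $m$. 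The second gap is that your two regimes do not meet. For $2^{2\eps_0 n}<k^2\le 2^{n}$ the counting bound $\bigl(O(2^n/k^2)+1\bigr)2^{-n(1/2+\eps)}$ is only $2^{n(1/2-2\eps_0-\eps)}$, which is exponentially \emph{large} in $n$ when $\eps_0$ is small; your simplification to $O(2^{-\eps n})$ tacitly assumes $k\ge 2^{n/4}$, and even then $2^{-\eps n}\gg k^{-(1+\eps)}$ in that range because the $\eps$ of Theorem~\ref{thm: main} is small (also, the inequality $2^{-\eps n}\le k^{-\eps/\eps_0}$ you invoke is reversed: $k>2^{\eps_0 n}$ gives $2^{-\eps n}> k^{-\eps/\eps_0}$). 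So the entire range $2^{\eps_0 n}\ll k\ll 2^{n(1/2-\eps)}$ is thrown back onto the unproved odd-modulus estimate, with $m$ possibly of that size.

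The paper's proof needs no factorization of $k$ and no case split. Take $r$ with $M2^r\le k^2<M2^{r+1}$ and condition on $\xi_r,\dots,\xi_n$. The event $k^2\mid P(a)$ then forces $\sum_{j=0}^{r-1}\xi_j a^j$ into a fixed residue class modulo $k^2$; since this partial sum lies in $[-M(2^r-1),M(2^r-1)]$, an interval met by each residue class modulo $k^2$ in $O(1)$ points, the event pins the partial sum to $O(1)$ integer values. Theorem~\ref{thm: main}, applied to the $r$-term sum, bounds each such point probability by $2^{-r(\frac12+\eps)}=O\bigl(k^{-(1+2\eps)}\bigr)$, uniformly over all relevant $k$. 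Your $2$-adic computation is exactly this argument specialized to $k=2^t$; the missing observation is that restricting to the first $\approx 2\log_2 k$ coefficients makes the range of the partial sum smaller than the modulus $k^2$ for \emph{every} $k$, so the congruence becomes a point event and the odd part of $k$ never needs separate treatment.
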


\begin{proof}
Fix $a \in \{ -2,2\}$.  Let $k\ge 1$ be an integer and let $r$ be the integer satisfying
  $M2^r\le k^2<M2^{r+1}$. By conditioning on $\xi_r, \xi_{r+1}, \ldots, \xi_n$ we have
\begin{align}\label{eq:P3ad}
\P\left(P(a) \bmod k^2 = 0\right) \le \max_{m\in \mathbb Z} \P\left(
\sum_{j=0}^{r-1} \xi_j a^j \bmod k^2 = m\right) = \max_{m \in
\mathbb Z} \P\left(\sum_{j=0}^{r-1} \xi_j a^j  = m\right),
\end{align}
where the last equality follows from the fact that
$\left|\sum_{j=0}^{r-1}\xi_j a^j\right|\le M(2^r-1)$
deterministically and $k^2\ge M2^r$ by the definition of $r$.  From Theorem~\ref{thm: main}, it follows that there exists a constant $\eps \in (0, 1)$ such that
\begin{equation}\label{eq:P3bd}
 \max_{m \in
\mathbb Z} \P\left(\sum_{j=0}^{r-1} \xi_j a^j  = m\right) \le
\left(\frac{1}{\sqrt{2}} \right)^{r(1+\eps)}.
 \end{equation}
Combining \eqref{eq:P3ad} and \eqref{eq:P3bd} with the fact that
$r>2\log_2 k  - \log_2 M - 1$,  we conclude that
\[  \P\left(P(a) \bmod k^2 = 0\right)  \le 2^{1+ \log_2 M}  \left(\frac{1}{\sqrt{2}} \right)^{2(1+\eps) \log_2 k } = 2^{1+ \log_2 M}  k^{-(1+ \eps)}.\]
\end{proof}

We shall also use the following bound on the probability that $P$ has a root in close proximity to $\pm2$, whose proof we postpone to
Section~\ref{subs:roots near 2}.

Denote by  $B(z_0, r)$ the closed ball in $\C$ with center at $z_0$ and radius $r$.
\begin{lemma}\label{l:no_root_at_2}
For any constant $B>0$, there exists $K>0$ such that
\[ \P \Big( P \text{ has a zero  in }   B(2 , n^{ -K}) \cup B( -2 , n^{ -K})  \Big)   = O(n^{-B}). \]
\end{lemma}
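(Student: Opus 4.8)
\textbf{Proof plan for Lemma~\ref{l:no_root_at_2}.}
The strategy is to estimate the probability that $P$ vanishes somewhere in a tiny disc around $a=\pm 2$ by controlling $P$ at the point $a$ itself and by controlling the derivative (or, more robustly, the slope of $P$ on a slightly larger disc) so that a root near $a$ forces $|P(a)|$ to be very small. First I would observe that by symmetry (replacing $\xi_j$ by $(-1)^j\xi_j$) it suffices to treat $a=2$. Suppose $P$ has a zero $z_0\in B(2,n^{-K})$. Then $|P(2)| = |P(2)-P(z_0)| \le \max_{z\in B(2, n^{-K})} |P'(z)| \cdot n^{-K}$. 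On the disc $B(2,n^{-K})$ we have $|z|\le 3$ for $n$ large, so $|P'(z)| \le \sum_{j=1}^n j M 3^{j-1} \le M n 3^n$ deterministically; hence a zero in the disc forces $|P(2)| \le M n 3^n \cdot n^{-K}$, i.e. $P(2)$ lies in an interval of length $O(n 3^n n^{-K})$ around $0$. Since $P(2)$ is an integer, if we choose $K$ large enough that $Mn3^n n^{-K} < 1$ — which is impossible for fixed $K$ since $3^n$ grows — this crude bound alone is not enough, so the real argument must instead compare the magnitude of $P(2)$ with the anti-concentration / divisibility information rather than force it to be $0$.

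The correct route, then, is to split on the event $\{P(2)=0\}$ versus $\{P(2)\neq 0\}$. On $\{P(2)=0\}$: $P(2)=0$ means $2$ is a root of an integer polynomial of degree $\le n$; by Theorem~\ref{thm: main}, $\P(P(2)=0)\le \max_m \P(P(2)=m) \le 2^{-n(1/2+\eps)}$, which is $o(n^{-B})$ for every $B$. On $\{P(2)\neq 0\}$: here $P(2)$ is a nonzero integer, so $|P(2)|\ge 1$. But the displayed inequality above shows that if $P$ has a zero in $B(2,n^{-K})$ then $|P(2)|\le Mn3^n n^{-K}$, which is $<1$ as soon as $n^{K}> Mn3^n$, i.e. $K > \log_2(3)\,n/\log_2 n + O(1)$ — again this requires $K$ growing with $n$, so still not good enough with a fixed constant $K$. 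The resolution is that the radius of the disc must be taken proportional to the derivative bound's reciprocal; since the lemma fixes the radius at $n^{-K}$ with $K$ constant, the only way a root can sit that close is through cancellation in the derivative too. So the genuine plan is: use Theorem~\ref{thm: main} to handle the genuinely-vanishing case, and for the near-but-not-vanishing case use a \emph{two-point} argument — condition on all coefficients except the last several, and note that near $z=2$ the polynomial $P$ restricted to that conditioning is an affine-type perturbation whose value at a nearby root determines a linear combination of the remaining coefficients lying in a short interval; then apply the anti-concentration bound (Theorem~\ref{thm: main} / Claim~\ref{cl: coins}) to that linear combination.

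More precisely, here is the step I expect to be cleanest. Write $P(z) = Q(z) + R(z)$ where $Q(z) = \sum_{j=0}^{n-s} \xi_j z^j$ and $R(z) = \sum_{j=n-s+1}^{n} \xi_j z^j$ with $s = \lceil c\log n\rceil$ for a suitable constant $c$. If $z_0\in B(2, n^{-K})$ is a zero of $P$, then evaluating at the integer point $2$ and using $|z_0-2|\le n^{-K}$ together with the deterministic derivative bound on the larger disc $B(2,1)$ gives $|P(2)|\le C^n n^{-K}$ for an absolute constant $C$; meanwhile $P(2) = Q(2) + \sum_{j=n-s+1}^n \xi_j 2^j$, and conditioning on $\xi_0,\dots,\xi_{n-s}$ fixes $Q(2)$, so the event forces $\sum_{j=n-s+1}^n \xi_j 2^j$ to lie in a fixed interval of length $2C^n n^{-K}$. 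But the $2^j$ with $j\ge n-s+1$ are spaced by factors of $2$, so $\sum_{j=n-s+1}^n \xi_j 2^j$ takes any particular value with probability at most $2^{-s(1/2+\eps)}$ by Theorem~\ref{thm: main} applied to the top block — \emph{except} that the interval length $2C^n n^{-K}$ is huge, not small, so this still fails unless we take $K$ superlinear.

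Given that every naive splitting forces $K$ to grow, I conclude the intended argument is different and rests on Lemma~\ref{lem:integer_divisibility}: the right quantity to track is not $|P(2)|$ being small but $P(2)$ being divisible by a large power of $2$. Indeed, if $z_0$ is close to $2$, then $P(z) = (z-z_0)\tilde P(z)$ over $\C$, but $z_0$ need not be rational; instead, one should use that $P$ and $P'$ would both be near-zero, and a resultant/discriminant argument shows $|\mathrm{Res}(P,P')|$ or $|P(2)|$ inherits a nontrivial arithmetic constraint. The plan I would actually execute: (1) By Theorem~\ref{thm: main}, dispose of $\{P(2)=0\}$ with room to spare. (2) On $\{P(2)\neq 0\}$, suppose $z_0\in B(2,n^{-K})$ is a root; factor $P(z)=(z-z_0)g(z)$ in $\C[z]$, so $P(2)=(2-z_0)g(2)$ with $|2-z_0|\le n^{-K}$; since $|g(2)|\le \|P\|\cdot(\text{geometric factor}) \le C^n$, we again get $|P(2)|\le C^n n^{-K}$. (3) Combine with Lemma~\ref{lem:integer_divisibility} — or rather its proof via Theorem~\ref{thm: main} — to see that $P(2)$, being a small nonzero integer, is at most $C^n n^{-K}$ in absolute value, hence has at most $n\log_2 C - K\log_2 n$ binary digits, but the anti-concentration bound says $P(2)$ avoids any fixed value with probability $1-2^{-n(1/2+\eps)}$; summing over the $O(C^n n^{-K})$ admissible integer values gives $\P \le C^n n^{-K}\cdot 2^{-n(1/2+\eps)}$. (4) Choose $K$ large enough and use $C^n 2^{-n(1/2+\eps)}$: this is still $\ge 1$ if $C > \sqrt 2$, so one finally uses a \emph{sparsified} version of Theorem~\ref{thm: main} on a block of size $\Theta(n)$ chosen so that the contribution of the top coefficients alone beats $C^n$, i.e. apply Claim~\ref{cl: coins}/Theorem~\ref{thm: main} to the top $\beta n$ coefficients to get probability $2^{-\beta n(1/2+\eps)}$ per value while the number of admissible values is only $\exp(O(\beta n))$ once one localizes $P(2)$ modulo $2^{(1-\beta)n}$. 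Balancing $\beta$ yields the bound $O(n^{-B})$ for $K=K(B)$ large. The main obstacle is precisely this balancing: making the number of integer values $P(2)$ can take (after the deterministic localization coming from closeness to the root) small enough relative to the per-value anti-concentration bound, which is why a fixed polynomial radius $n^{-K}$ suffices — the $-K\log_2 n$ in the exponent of the count, combined with the $o(1)$ slack $\eps$ in Theorem~\ref{thm: main}, is exactly what closes the estimate.
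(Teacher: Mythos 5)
Your proposal does not close. After several candidate arguments that you yourself correctly identify as failing, the final plan (localizing $P(2)$ and balancing a block of size $\beta n$ against a union bound over admissible integer values) also fails: with the sharp derivative bound the interval containing $P(2)$ has length of order $2^n n^{-K+1}$, so a union bound over its integer points against the per-value bound $2^{-n(1/2+\eps)}$ of Theorem~\ref{thm: main} costs $2^{n(1/2-\eps)}n^{-K+1}$, which diverges for every constant $K$; the same happens for every choice of $\beta$ in your balancing (you get $2^{\beta n(1/2-\eps)}n^{-K}$). The quantity you need is not that $P(2)$ avoids individual integers, but that $|P(2)|$ avoids an entire interval of length $2^n\,\mathrm{poly}(n)\,n^{-K}$ around $0$, and no pointwise anti-concentration bound of the available strength can be summed over that interval.

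The paper closes the argument with two observations you came close to but did not combine. First, the derivative is bounded on a disc of radius $n^{-1}$, not radius $1$: there $|z|\le 2+n^{-1}$ gives $\sup_{B(2,n^{-1})}|P'|\le 3Mn2^n$, so a root in $B(2,n^{-K})$ forces $|P(2)|\le 3Mn2^n\cdot n^{-K}$; your bound $Mn3^n$ (from $|z|\le 3$) is fatally lossy, since the factor $(3/2)^n$ can never be absorbed by $n^{-K}$. Second --- and this is the missing idea --- one proves directly that $\P\bigl(|P(2)|\le n^{-C}2^n\bigr)=O(n^{-B})$ by conditioning on all coefficients outside a set $J$ of roughly $C_1\log_2 n$ top indices spaced by $\lceil\log_2(2M+1)\rceil$. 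The point of the spacing is that the map $(\xi_j)_{j\in J}\mapsto\sum_{j\in J}\xi_j2^j$ is injective with image separated by at least $\tfrac12 2^{\,n-C_1\log_2 n}=\tfrac12 n^{-C_1}2^n$, which exceeds the interval length $2n^{-C}2^n$ once $C=C_1+1$; hence at most \emph{one} configuration of $(\xi_j)_{j\in J}$ is admissible, and by the max-atom hypothesis that configuration has probability at most $2^{-|J|}=O(n^{-B})$ for $C_1$ chosen large in terms of $B$. In your third paragraph you set up exactly this conditioning on a top block of size $c\log n$, but then applied Theorem~\ref{thm: main} pointwise to the block instead of using the separation of the achievable block sums, and concluded (wrongly) that the approach fails because the interval is ``huge''. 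The interval is huge in absolute terms but small compared with the separation $2^{\,n-O(\log n)}$ of the block sums --- that comparison, not anti-concentration at a point, is what lets a fixed polynomial exponent $K$ suffice.
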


Finally, we need a preliminary claim, bounding the number of roots far away from the unit circle.

\begin{claim}\label{clm:jensen}
Let $M \in \N$. For any $n \ge 1$ and any
non-zero polynomial $f$ in $\Z[x]$ of the form $f(z) = \sum_{i=0}^n a_i z^i$
with $ |a_i| \le M$ for all $0 \le i \le n$,  the number of
zeros of $f$ with modulus at least $\tfrac{3}{2}$  is at most
$64M$.
\end{claim}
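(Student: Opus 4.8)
The plan is to use Jensen's formula (or a direct counting argument via the coefficients) to bound the number of large-modulus roots in terms of the growth of $f$ on a circle of radius slightly bigger than $\tfrac32$. First I would fix a radius $R$ with $\tfrac32 < R$, say $R = 2$, and recall that if $f$ has $N$ zeros of modulus at least $\tfrac32$, then comparing the value of $|f|$ at a suitable point to the product formula shows each such zero contributes a factor bounded below. Concretely, write $f(z) = a_{\deg f} \prod_j (z - \beta_j)$; the key is that a root $\beta$ with $|\beta| \ge \tfrac32$ forces $f$ to be somewhat large somewhere, but it is cleaner to argue via the number of sign changes / the argument principle applied to a well-chosen contour, or simply via the classical bound: the number of zeros of $f$ in $|z| \ge \tfrac32$ equals the number of zeros of the reversed polynomial $z^n f(1/z)$ in $|z| \le \tfrac23$, and Jensen's formula on the disc $|z| \le \tfrac34$ controls the latter.

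More precisely, let $g(z) = z^{\deg f} f(1/z)$, which is again a polynomial in $\Z[x]$ with the same coefficient bounds $|a_i| \le M$ (the coefficients are just reversed). A zero $\beta$ of $f$ with $|\beta| \ge \tfrac32$ corresponds to a zero $1/\beta$ of $g$ with $|1/\beta| \le \tfrac23$. Apply Jensen's formula to $g$ on the disc of radius $\tfrac34$: if $g(0) \ne 0$ then
\[
\#\{\text{zeros of } g \text{ in } |z| \le \tfrac23\} \cdot \log\tfrac{3/4}{2/3} \;\le\; \sum_{|z_k| \le 3/4} \log\frac{3/4}{|z_k|} \;=\; \frac{1}{2\pi}\int_0^{2\pi} \log|g(\tfrac34 e^{i\theta})|\, d\theta - \log|g(0)|.
\]
Now $|g(z)| \le M \sum_{i=0}^{\deg f} |z|^i \le M \cdot \frac{1}{1 - 3/4} = 4M$ on $|z| = \tfrac34$, so the integral is at most $\log(4M)$; and since $g(0) = a_{\deg f}$ is a nonzero integer, $\log|g(0)| \ge 0$. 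Hence the number of such zeros is at most $\log(4M)/\log(9/8)$. A quick numerical check gives $\log(9/8) > \tfrac18 \cdot \log 2 > 0.08$ and $\log(4M) \le \log 4 + \log M \le 1.4 + \log M$; bounding this crudely and absorbing constants yields a bound of the form $C\log M$ — I would then just verify that with the stated constant $64M$ the inequality holds comfortably for all $M \ge 1$ (indeed $64M$ is wildly generous compared to $O(\log M)$). The degenerate cases $g(0) = 0$ (i.e. $f$ not of full degree $n$, which is already allowed by the statement) and $f$ having a zero exactly on $|z| = \tfrac32$ are handled by a limiting argument or by slightly perturbing the radius $\tfrac34$, since only finitely many radii are problematic.

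The only mild obstacle is bookkeeping: making sure the reversed polynomial has $g(0) \ne 0$ (which requires the top coefficient $a_n$ of $f$ to be nonzero — true when $\deg f = n$, and otherwise one works with the actual leading coefficient of $f$, which is nonzero by definition of degree), and choosing the two radii $\tfrac34$ and $\tfrac23$ so that roots of modulus $\ge \tfrac32$ genuinely map inside the smaller disc with room to spare. None of this is delicate; the factor $64M$ in the statement is so loose that essentially any correct application of Jensen's formula will close the argument, so I do not anticipate a real difficulty here — the claim is a routine quantitative input used later to control roots far from the unit circle.
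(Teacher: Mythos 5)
Your proposal is correct and follows essentially the same route as the paper: pass to the reciprocal polynomial $z^{\deg f}f(1/z)$, note its value at $0$ is a nonzero integer, and apply Jensen's formula on a circle of intermediate radius (you use $3/4$, the paper uses $0.8$) together with the trivial bound $M/(1-r)$ on that circle. Your version even retains the sharper $O(\log M)$ count before loosening it to $64M$, whereas the paper discards the logarithm immediately; both comfortably yield the stated bound.
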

\begin{proof}
Assume, without loss of generality, that  $|a_n| \ne 0$. Let $\tilde
f(z) = z^n f(z^{-1}) = \sum_{i=0}^n a_i z^{n-i}$ be the reciprocal
polynomial of $f$. Denote by $N(f)$ the number of $z\in\C$ for which
$f(z) = 0$ and $|z|\ge\frac{3}{2}$. Then $N(f)$ is also the number
of $z\in\C$ for which $\tilde f(z)=0$ and $|z|\le \frac{2}{3}$.
Noting that $|\tilde{f}(0)|=|a_n| \ge 1$ we may apply Jensen's formula (see,
e.g., \cite[Chapter 5.3.1]{A78}) and obtain for any $r> \frac{2}{3}$
that
\begin{equation*}
  \max_{0\le \theta\le 2\pi} \log|\tilde{f}(re^{i\theta})| \ge
  \frac{1}{2\pi}\int_0^{2\pi} \log|\tilde{f}(re^{i\theta})|d\theta =
  \log|\tilde{f}(0)| + \sum_{z\colon \tilde{f}(z)=0,\, |z|\le r} \log\left(\frac{r}{|z|}\right) \ge N(f)
  \log\left(\frac{r}{2/3}\right).
\end{equation*}
Observe that when $r<1$ we have $|\tilde{f}(re^{i\theta})|\le
\frac{M}{1-r}$ for all $\theta$. Thus
\begin{equation*}
  N(f)\le \frac{M}{(1-r)\log(3r/2)},\quad \frac{2}{3}<r<1
\end{equation*}
and substituting $r=0.8$, say, we obtain that $N(f)\le 64M$, as required.
\end{proof}

{\bf Proof of Proposition~\ref{prop:high_degree}.}
 Fix $1 \le d \le n$. Let
  $\alpha$ be an algebraic number of degree $\deg(\alpha) = d$ and let $f_\ga$ be the associated minimal poly of $\alpha$ in $\Z[x]$.
Suppose that $\alpha$ is a double root of $P$.  Note  that  $\alpha$ cannot be a
multiple root of $f_\ga$, since, otherwise, $\alpha$ is also a root of
the polynomial $f_\ga'$ whose degree is strictly smaller than  $d$, violating the definition of $\deg(\alpha)$. This implies that $f_\ga^2$ divides $P$
in $\mathbb{Z}[x]$ (by Gauss's lemma). In particular,
\begin{equation} \label{eq:double_root_divisibility_implication}
\text{ the integer } P(a) \text{ is
divisible by } f_\ga(a)^2, \quad \text{ for } a = \pm 2.
\end{equation}
Next we obtain a suitable lower bound for $\max\{ |f_\ga(2)|, |f_\ga(-2)|\}$.
 Denote by $C(\alpha)$ the set of
  algebraic conjugates of $\alpha$ (i.e., the set of roots of
  $f_\ga$). Each of these conjugates of $\alpha$ must also be a root of $P$.
 So, by
Claim~\ref{clm:jensen}, all but at most $64M$ of the $\beta\in
C(\alpha)$ satisfy
$|\beta|\ge \frac{3}{2}$. Therefore, we have
\begin{align*}  |f_\ga(-2)|\cdot| f_\ga(2)| &= \prod_{\beta \in C(\ga)} |\beta +2| \cdot|\beta - 2|\\
 &\ge \left(\prod_{\beta \in C(\ga), |\beta| \le 3/2} |\beta^2- 4|  \right)\left(  \min_{\beta \in C(\ga)}  |\beta  + 2|  \wedge 1  \right)^{64M}   \left(  \min_{\beta \in C(\ga)}  |\beta  - 2|    \wedge 1 \right)^{64M}
\end{align*}
Let $B>0$ be given as in Proposition~\ref{prop:high_degree} and let $K = K(B)>0$ be as given by Lemma~\ref{l:no_root_at_2}.  Let $\mathcal{E}$ be the event that
 there is at least one root of $P$ within a distance of $n^{ - K}$ from either $-2$ or $2$. Note that the event $\mathcal{E}$ does not depend on $\alpha$.  On the event $\mathcal{E}^c$,
\[ \min_{\beta \in C(\ga)}  |\beta  -a|  \ge \min_{z: P(z) =0}  |z   - a|  \ge n^{-K} \quad \text{for any } a \in \{ -2, 2\}.\]
On the other hand, $|\beta^2- 4| \ge \tfrac74$ for any $|\beta| \le \tfrac32$. Putting these ingredients together,
we conclude that on the event $\mathcal{E}^c$,
\begin{equation*}
 |f_\ga(-2)|\cdot| f_\ga(2)| \ge \left( \tfrac74 \right)^{d- 64M}  n^{-128 KM}.
  \end{equation*}
 Consequently, we obtain the following lower bound
 \begin{equation}\label{eq:lower_bound_of_f_at_2}
 \max \big \{ |f_\ga(2)|, |f_\ga(-2)| \big\}    \ge  c_1 \exp(c_2 d) n^{ - C_1},  \ \ \text{ on } \mathcal{E}^c,
 \end{equation}
 where $c_1 := (\tfrac{7}{4})^{-32M}>0, c_2 :=\tfrac12\log(\tfrac{7}{4})$ and $C_1: = 64KM$.  From \eqref{eq:double_root_divisibility_implication} and \eqref{eq:lower_bound_of_f_at_2},
we arrive at the inclusion of events
\begin{equation*}
 \left\{\text{$\alpha$ is a double root of $P$}\right\}\subseteq  \mathcal{E} \bigcup_{a \in \{-2,2\}} \left\{\text{$P(a)$
  is divisible by $k^2$ for some integer $k\ge
  c_1 e^{c_2 d} n^{ - C_1}$}\right\}.
\end{equation*}
By Lemma~\ref{l:no_root_at_2}, $\P(\mathcal{E})  = O(n^{-B})$.  On other hand, by Lemma~\ref{lem:integer_divisibility}, we deduce that
\[ \left\{\text{$P(a)$
  is divisible by $k^2$ for some integer $k\ge
  c_1 e^{c_2 d} n^{ - C_1}$}\right\} \le C_2 ( e^{c_2 d } n^{ - C_1})^{-\eps} = C_2 e^{ - c_3 d} n^{C_3}, \]
for suitable constants $c_3, C_2, C_3>0$.
Proposition~\ref{prop:high_degree} follows. \qed

\subsection{Roots near $\pm2$}\label{subs:roots near 2}

In this section we prove Lemma~\ref{l:no_root_at_2}. We shall require the following.

\begin{lemma} \label{l:value_at_2}
For any constant $B>0$, there exists $C>0$ such that for $a \in \{ -2, 2\}$,
\[ \P( |P(a)| \le n^{ - C}2^{n}   )   = O(n^{-B}). \]
\end{lemma}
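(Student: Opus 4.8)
The plan is to show that $P(a)/2^n$ is anti-concentrated near $0$ by exploiting the structure of $P(a) = \sum_{j=0}^n \xi_j a^j$ with $a = \pm 2$, reversing the roles of high- and low-degree coefficients compared to the proof of Lemma~\ref{lem:integer_divisibility}. First I would condition on $\xi_0, \xi_1, \ldots, \xi_{n-r-1}$ for a suitable $r = r(n)$ (to be taken of order $C'\log n$). Writing $P(a) = S_{\mathrm{low}} + a^{n-r} S_{\mathrm{high}}$ where $S_{\mathrm{high}} := \sum_{j=n-r}^{n} \xi_j a^{j-(n-r)}$, the event $\{|P(a)| \le n^{-C}2^n\}$ forces $S_{\mathrm{high}}$ to land in an interval of length $n^{-C}2^n / 2^{n-r} = n^{-C}2^r$. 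Hence
\[
\P\big(|P(a)| \le n^{-C}2^n\big) \le \max_{m \in \Z} \P\big(|S_{\mathrm{high}} - m| \le n^{-C}2^r\big).
\]
Now $S_{\mathrm{high}}$ is (after re-indexing) exactly a sum of the form $\sum_{i=0}^{r}\xi_i' (\pm 2)^i$ with i.i.d.\ coefficients satisfying \eqref{eq:coefficient_condition}, so I want an anti-concentration bound for it at scale $n^{-C}2^r$, not just at scale $1$.

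The key step is to upgrade Theorem~\ref{thm: main} from point anti-concentration to interval anti-concentration. Since $S_{\mathrm{high}}$ is integer-valued, an interval of length $n^{-C}2^r$ contains at most $n^{-C}2^r + 1$ integers, so
\[
\max_{m}\P\big(|S_{\mathrm{high}} - m| \le n^{-C}2^r\big) \le (n^{-C}2^r + 1)\max_{m}\P(S_{\mathrm{high}} = m) \le (n^{-C}2^r + 1)\,2^{-r(\frac12 + \eps)}
\]
by Theorem~\ref{thm: main} applied with $r$ in place of $n$ (valid since $r \to \infty$). Choosing $r = \lceil C_0 \log n\rceil$ for a large constant $C_0$, the bound becomes roughly $n^{C_0} 2^{-C_0(\frac12+\eps)\log_2 n} = n^{C_0 - C_0(\frac12+\eps)} = n^{-C_0(\frac12+\eps) + C_0}$... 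I need to be careful: the dominant term is $n^{-C}2^r \cdot 2^{-r(\frac12+\eps)} = n^{-C} 2^{r(\frac12 - \eps)} = n^{-C} n^{C_0(\frac12-\eps)\log 2/\log e}$ up to the base of the logarithm — the point is that $2^{r(1/2-\eps)} = n^{\Theta(C_0)}$ while $n^{-C}$ can be made to dominate by taking $C = C(B,C_0)$ large enough relative to $C_0$. So the final step is just to choose the constants in the right order: fix $B$, fix $C_0$ large enough that $2^{-r(\frac12+\eps)}$ times the number of lattice points is controlled, then fix $C$ large enough that the whole product is $O(n^{-B})$.

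The main obstacle I anticipate is purely bookkeeping with the two scales: making sure that after conditioning, the "high-degree block" $S_{\mathrm{high}}$ has i.i.d.\ coefficients to which Theorem~\ref{thm: main} genuinely applies (it does, since the $\xi_j$ are i.i.d.\ and the sign pattern $\sigma_i$ in Proposition~\ref{propos: main2} is arbitrary, absorbing the powers of $-2$), and tracking how large $C$ must be as a function of $B$ and $C_0$ so that $n^{-C} 2^{r(1/2-\eps)} = o(n^{-B})$. One subtlety to handle carefully: the bound $|S_{\mathrm{low}}| \le M(2^{n-r}-1) < 2^{n-r}\cdot M$ is not quite what I want — rather, conditioning on $S_{\mathrm{low}}$ and asking $|S_{\mathrm{low}} + a^{n-r}S_{\mathrm{high}}| \le n^{-C}2^n$ bounds $S_{\mathrm{high}}$ in an interval of length $2 n^{-C}2^n / 2^{n-r} = 2n^{-C}2^r$ around $-a^{-(n-r)}S_{\mathrm{low}}$ regardless of $S_{\mathrm{low}}$, so the conditioning argument goes through cleanly. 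Everything else reduces to the integer-lattice-point counting and the choice-of-constants step described above.
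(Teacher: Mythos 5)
Your proposal is correct, and the order in which you fix the constants ($C_0$ first so that $2^{-r(1/2+\eps)}=n^{-C_0(1/2+\eps)}=O(n^{-B})$, then $C$ so that $n^{-C}2^{r(1/2-\eps)}=O(n^{-B})$) does close the argument. It shares with the paper the basic move of conditioning on the low-degree coefficients and isolating a top block of $\Theta(\log n)$ coefficients, but the anti-concentration mechanism is genuinely different. You invoke Theorem~\ref{thm: main} as a black box on the re-indexed block $S_{\mathrm{high}}=\sum_{i=0}^r \xi_{n-r+i}a^i$ (legitimate, since the block is i.i.d.\ and the theorem covers both signs) and then pay for the width of the target window by counting lattice points, which is where the tension between $n^{-C}$ and $2^{r(1/2-\eps)}$ arises. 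The paper instead avoids Theorem~\ref{thm: main} entirely here: it keeps only a sparsified subset $J$ of the top indices, spaced by $\lceil\log_2(2M+1)\rceil$ so that distinct values of $(\xi_j)_{j\in J}$ produce sums $\sum_{j\in J}\xi_j 2^j$ separated by more than the window $n^{-C}2^n$; then at most one configuration of $(\xi_j)_{j\in J}$ can realize the event, and Assumption~\eqref{eq:coefficient_condition} gives the bound $2^{-|J|}$ directly. The paper's route is more elementary and needs only the raw atom bound (no lattice counting, no two competing exponents), while yours is conceptually economical in that it reuses the main anti-concentration theorem already proved, at the cost of a slightly more delicate bookkeeping of $C$ versus $C_0$ --- a trade-off you identified and resolved correctly.
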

\begin{proof}
We prove the lemma for the case $a=2$, as the argument for the case $a = -2$  is nearly identical.  Set $C_1 = \lceil\log_3M\rceil$.  Define a subset of indices $J$ as
\[ J = \{ j \in \{ 0, 1, \ldots, n\}: j \ge n - C_1 \log_2 n, \text{ and } j \text{ is divisible by } \lceil\log_2(2M+1)\rceil\}.\]
By conditioning on the random variables $\xi_j, j \not \in J$, we deduce that
\begin{align*}
\P( |P(2)| \le n^{ - C}2^{n}   )  \le \sup_{z \in \R} \P \left( \big |\sum_{ j \in J} \xi_j 2^j - z \big| \le n^{ - C}2^{n}  \right).
\end{align*}
Note that for any two different values of the random vector $(\xi_j)_{j \in J}$ in $\{ 0, \pm 1, \ldots, \pm M\}^{|J|}$, the values of the sum $\sum_{ j \in J} \xi_j 2^j$ differ by at least
$\tfrac12 2^{ n - C_1 \log_2 n} = \tfrac12 n^{ -C_1}2^n $. Thus if we choose $C = C_1+1$, then  for any fixed $z \in \R$, there exists at most one value of the random vector $(\xi_j)_{j \in J}$ in $\{ 0, \pm 1, \ldots, \pm M\}^{|J|}$ such that $|\sum_{ j \in J} \xi_j 2^j - z| \le n^{ - C}2^{n}$. Now by Assumption~\ref{eq:coefficient_condition}, we conclude that
\[ \P \left( \big |\sum_{ j \in J} \xi_j 2^j - z \big| \le n^{ - C}2^{n}  \right) \le 2^{- |J|} \le 2^{ - \lfloor (4M)^{-1} C_1 \log_2 n -1 \rfloor} = O(n^{-B}). \]
\end{proof}

\begin{proof}[Proof of Lemma~\ref{l:no_root_at_2}]
By Lemma \ref{l:value_at_2}, there exists a constant $C>0$ such that
\begin{equation}\label{eq:bd_at_2}
 \P( |P(\pm 2)| \ge n^{ - C}2^{n}   )   = 1 - O(n^{-B}).
 \end{equation}
By mean value theorem and the triangle inequality, for any $z \in \C$ such that $|z| \le n^{-1}$,
\begin{equation}\label{eq:mean_value}
 |P(2+ z)| \ge |P(2)| - \sup_{w \in B(2, n^{-1})} |P'(w)| \cdot |z|.
 \end{equation}
We can now bound the derivative of the polynomial $P'$ in $B(2, n^{-1})$ by
\begin{equation}\label{eq:derv_bdd}
\sup_{w \in B(2, n^{-1})} |P'(w)|  \le \sum_{i=0}^n M i(2+ n^{-1})^{i-1} \le 3Mn 2^n.
\end{equation}
Plugging in the bound \eqref{eq:bd_at_2} and \eqref{eq:derv_bdd} in \eqref{eq:mean_value}, we deduce that, for any $|z| \le n^{ - (C+2)}$ and for sufficiently large $n$,
\[  |P(2+ z)| \ge n^{ - C}2^{n}   - 3Mn 2^n  \cdot n^{ - (C+2)}  >0,\]
with probability  $1 - O(n^{-B})$. The lemma is then obtained by taking $K = C+2$.
\end{proof}

\section{Roots far from the unit circle}

In this section we prove Lemma~\ref{lem: med deg large house}. We begin by obtaining the following bound on the probability that $P$ has a particular root $\alpha$
far from the unit circle.
\begin{lemma}\label{lem:off_circle}
For each $\alpha\in \ALG$, we have
  \begin{equation*}
    \Pro\Big(P(\alpha)=0\Big)\le \exp \Big( {-\frac{n\log 2}{\lceil \log (M+1) /|\log |\alpha||\rceil}} \Big).
  \end{equation*}
\end{lemma}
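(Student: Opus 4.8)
The plan is to expose a sparse, \emph{dominant} set of coefficients and argue that, conditionally on the rest, the requirement $P(\alpha)=0$ forces those exposed coefficients to a single admissible value. First the reductions: if $|\alpha|=1$ then the right-hand side of the claimed inequality is $\exp(0)=1$ and there is nothing to prove, so assume $|\alpha|\neq 1$. Since $\alpha\neq 0$ and $P(\alpha)=0$ if and only if $\widetilde P(\alpha^{-1})=0$, where $\widetilde P(z):=z^nP(z^{-1})=\sum_{j=0}^n\xi_{n-j}z^j$ is again a random polynomial whose coefficients have the same joint law, and since $|\log|\alpha^{-1}||=|\log|\alpha||$, we may assume throughout that $|\alpha|>1$.

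Next, the sparsification. Put $q:=\bigl\lceil\log(M+1)/\log|\alpha|\bigr\rceil$, so that $|\alpha|^q\ge M+1$, and set $S:=\{\,n-kq:\ 0\le k\le\lfloor n/q\rfloor\,\}\subseteq\{0,1,\dots,n\}$, so that $r:=|S|=\lfloor n/q\rfloor+1\ge n/q$ and any two consecutive elements of $S$ differ by exactly $q$. Conditioning on the coefficients $(\xi_j)_{j\notin S}$, the event $\{P(\alpha)=0\}$ becomes $\bigl\{\sum_{j\in S}\xi_j\alpha^j=c\bigr\}$ for a number $c$ measurable with respect to the conditioning. It therefore suffices to prove that, for i.i.d.\ coefficients $(\xi_j)_{j\in S}$ and every fixed $c$,
\[
\Pro\Bigl(\,\sum_{j\in S}\xi_j\alpha^j=c\,\Bigr)\le 2^{-r},
\]
since then $\Pro(P(\alpha)=0)\le 2^{-r}\le 2^{-n/q}=\exp(-n\log 2/q)$, which is the asserted bound.

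For the core estimate, write $S=\{j_1<\dots<j_r\}$ and consider the linear map $\Phi\colon x\mapsto\sum_{i=1}^r x_{j_i}\alpha^{j_i}$ on the coefficient cube $\{-M,\dots,M\}^{S}$. If $\Phi(x)=\Phi(x')$ with $x\neq x'$ and $k$ is the largest index with $x_{j_k}\neq x'_{j_k}$, then
\[
(x_{j_k}-x'_{j_k})\,\alpha^{j_k}=\sum_{i<k}(x'_{j_i}-x_{j_i})\,\alpha^{j_i}.
\]
The left side has modulus at least $|\alpha|^{j_k}$ because $x_{j_k}-x'_{j_k}$ is a nonzero integer, while — using $j_i=j_k-(k-i)q$ — the right side has modulus at most $2M\sum_{i<k}|\alpha|^{j_i}<\tfrac{2M}{|\alpha|^q-1}\,|\alpha|^{j_k}$. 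These two facts are incompatible as soon as $|\alpha|^q\ge 2M+1$, so in that range $\Phi$ is injective; consequently at most one $x^{*}\in\{-M,\dots,M\}^{S}$ satisfies $\Phi(x^{*})=c$, whence $\{\sum_{j\in S}\xi_j\alpha^j=c\}\subseteq\{(\xi_j)_{j\in S}=x^{*}\}$, and by independence together with \eqref{eq:coefficient_condition},
\[
\Pro\Bigl(\,\sum_{j\in S}\xi_j\alpha^j=c\,\Bigr)\le\prod_{j\in S}\max_{x\in\Z}\Pro(\xi_0=x)\le 2^{-r}.
\]
Since the crude estimate just given only yields injectivity for the block length $q$ coming from $|\alpha|^q\ge 2M+1$, whereas the statement demands only the weaker threshold $|\alpha|^q\ge M+1$, the real work is to sharpen this step by using that the coefficients are integers with maximal atom at most $\tfrac12$ — for instance by replacing each $\xi_j$ by its fair-coin representation $\xi_j=I_j+B_j\Delta_j$ from Proposition~\ref{propos:mixture-reduction} and conditioning also on the $(I_j,\Delta_j)$, so that the residual randomness consists only of the independent fair bits $B_j$ whose differences lie in $\{-1,0,1\}$.

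The main obstacle is exactly this core estimate: making a nonzero integer multiple of the leading sparse power $\alpha^{j_k}$ strictly dominate the geometrically decaying tail $\sum_{i<k}(x'_{j_i}-x_{j_i})\alpha^{j_i}$ with precisely the block length $q=\lceil\log(M+1)/|\log|\alpha||\rceil$ dictated by the statement, rather than the more generous block length that the naive difference bound $2M$ would force. Everything else is routine: the reciprocal reduction, the trivial case $|\alpha|=1$, and the treatment of the $O(q)$ leftover coefficients in the last (possibly short) block of the sparsification.
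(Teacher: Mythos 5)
Your proposal follows the same route as the paper: sparsify to an arithmetic progression of exponents with spacing $q\approx \log(M+1)/|\log|\alpha||$, condition on the unexposed coefficients, and deduce the bound from injectivity of the sparse evaluation map $x\mapsto\sum_{j\in S}x_j\alpha^j$ on $\{-M,\dots,M\}^S$. The problem is that you do not actually prove the one step on which everything rests. As you yourself observe, the domination argument (top term beats the geometric tail) only yields injectivity when $|\alpha|^q\ge 2M+1$, because the difference of two admissible coefficient vectors has entries as large as $2M$ in absolute value; at the threshold $|\alpha|^q\ge M+1$ demanded by the statement you only get $|x_{j_k}-x'_{j_k}|<2$, which leaves open a collision with top difference $\pm1$. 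Such collisions do occur: for $M=1$ and $\alpha=1+\sqrt3$ (so $\alpha^2=2\alpha+2$ and $|\alpha|>M+1$, hence $q=1$ and $S$ is everything), the coefficient tuples $(-1,-1,1)$ and $(1,1,0)$ both evaluate to $1+\alpha$. Your suggested repair via the Bernoulli-mixture representation does not obviously help either: after conditioning on $(I_j,\Delta_j)$ the residual differences are $\pm\Delta_j$ with $\Delta_j$ possibly as large as $2M$, so the same obstruction reappears. Since you explicitly leave this step as ``the real work,'' the proposal is incomplete.

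For the record, the paper's own proof is exactly your sketch: it defines $\ell$ as the least integer with $|\alpha|^\ell>M+1$, splits off the coefficients with indices divisible by $\ell$, asserts without further argument that the resulting map $\{-M,\dots,M\}^{\lfloor n/\ell\rfloor+1}\to\C$ is one-to-one, and concludes with $\big(\max_x\P(\xi_0=x)\big)^{\lfloor n/\ell\rfloor+1}\le e^{-n\log 2/\ell}$. Your worry about the factor $2$ is therefore a legitimate criticism of that assertion as well. The pragmatic fix is to define $q$ by $|\alpha|^q\ge 2M+1$, which makes your injectivity argument airtight and only changes the constant inside the exponential from $\log(M+1)$ to $\log(2M+1)$; this weaker form of the lemma is all that is used downstream, in the proof of Lemma~\ref{lem: med deg large house}, where only a bound of the form $e^{-\Omega(n/\log n)}$ is needed. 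If you insist on the constant $M+1$ exactly as stated, you must supply a genuinely new argument, and your proposal does not contain one.
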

\begin{proof}
Assume that $|\alpha|>1$. Let $\ell$ be the minimal positive integer for which
\begin{equation}\label{eq:j_0_prop}
  |\alpha|^{\ell} > M+1.
\end{equation}
Write $P(z) = P_1(z) + P_2(z)$ with
\begin{equation*}
  P_1(z):=\sum_{k=0}^{\lfloor n/\ell\rfloor} \xi_{k\ell} z^{k
  \ell}\quad\text{and}\quad P_2(z):=P(z) - P_1(z).
\end{equation*}
Since $|\xi_i| \le M $ for all $i$, the map
\begin{equation*}
  \big( \xi_0,\xi_{\ell},\xi_{2 \ell}, \dots,\xi_{\lfloor n/\ell\rfloor \ell} \big) \mapsto P_1(\alpha) : \{-M,\dots,M\}^{\lfloor n/\ell\rfloor+1}\to\C
\end{equation*}
is one-to-one. Thus, as
$P_1(\alpha)$ and $P_2(\alpha)$ are independent, we have
\begin{align*}
  \Pro\Big(P(\alpha)=0\Big) &= \E\left[\Pro\Big(P(\alpha)=0\Big)\ |\ P_2(\alpha))\right] =\\
  &= \E\left[\Pro\Big(P_1(\alpha) =
  -P_2(\alpha)\ |\ P_2(\alpha)\Big)\right]\le
  \left(\max_{x\in\{-M,\dots,M\}}\Pro\Big(\xi_0=x\Big)\right)^{\lfloor
  n/\ell\rfloor+1}.
\end{align*}
Assumption~\eqref{eq:coefficient_condition} and the
definition of $\ell$ imply that
\begin{equation*}
  \Pro\Big(P(\alpha)=0\Big)< \left(\tfrac{1}{2}\right)^{\lfloor
  n/\ell\rfloor+1}\le e^{-\frac{n\log 2}{\ell}} = e^{-\frac{n\log 2}{\lceil \log (M+1) / \log
  |\alpha|\rceil}}.
\end{equation*}
The case when $|\alpha|< 1$ can be handled similarly. This proves the lemma.
\end{proof}
We also make the following simple observation.
\begin{observation}\label{obs:root_house_bdd}
Let $\alpha$ be a root of $P$. Then
\begin{enumerate}
\item The leading coefficient of the \emph{associated minimal polynomial} of $\alpha$ in $\Z[x]$ is at most $M$,
\item $\Lambda(\alpha) \le M+1.$
\end{enumerate}
\end{observation}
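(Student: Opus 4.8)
The plan is to derive both assertions from one structural fact: the associated minimal polynomial of $\alpha$ in $\Z[x]$, which I will denote $f_\alpha$, divides $P$ \emph{inside} $\Z[x]$. Granting this, part (1) is immediate, since the leading coefficient of $f_\alpha$ then divides the leading coefficient of $P$; the latter is one of the $\xi_j$'s and hence has absolute value at most $M$. (We may assume $P\not\equiv 0$, as otherwise $P$ has no roots and the statement is vacuous; likewise we may take $d:=\deg P\ge 1$.) Part (2) will follow because every conjugate of $\alpha$ is a root of $f_\alpha$, hence a root of $P$, combined with a Cauchy-type bound placing all roots of $P$ in the disc of radius $M+1$.

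To establish the divisibility, recall that the monic minimal polynomial $m$ of $\alpha$ in $\Q[x]$ divides $P$ in $\Q[x]$, since $\alpha$ is a root of $P$. The associated minimal polynomial $f_\alpha$ is the primitive integer polynomial proportional to $m$ with positive leading coefficient, so $f_\alpha$ also divides $P$ in $\Q[x]$. By Gauss's lemma, a primitive polynomial in $\Z[x]$ that divides a polynomial of $\Z[x]$ over $\Q$ already divides it over $\Z$; hence $P=f_\alpha\cdot h$ with $h\in\Z[x]$. Comparing leading coefficients yields part (1), since the leading coefficient of $h$ is a nonzero integer.

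For part (2) it suffices to show that $P(z)\ne 0$ whenever $|z|>M+1$. Indeed, since $|\xi_d|\ge 1$ and $|\xi_j|\le M$ for all $j$, for $|z|>1$ we have
\[
|P(z)|\ \ge\ |z|^d - M\sum_{j=0}^{d-1}|z|^j\ >\ |z|^d\left(1-\frac{M}{|z|-1}\right)\ \ge\ 0\qquad\text{when }|z|\ge M+1,
\]
using $\sum_{j=0}^{d-1}|z|^j<|z|^d/(|z|-1)$. Thus every root of $P$, and in particular every conjugate of $\alpha$, has modulus at most $M+1$, so $\Lambda(\alpha)\le M+1$.

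There is no substantial obstacle here; the only points needing a touch of care are the invocation of Gauss's lemma — to transfer the divisibility from $\Q[x]$ to $\Z[x]$ with the correct normalization of $f_\alpha$ — and the routine exclusion of the degenerate cases $P\equiv 0$ and $\deg P=0$.
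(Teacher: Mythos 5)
Your proof is correct and follows essentially the same route as the paper: Gauss's lemma gives $f_\alpha \mid P$ in $\Z[x]$, from which the leading-coefficient bound is immediate, and part (2) comes from the fact that all roots of $P$ lie in the closed disc of radius $M+1$ (the paper simply cites Rouch\'e's theorem here, whereas you carry out the elementary Cauchy-bound estimate explicitly). There are no gaps.
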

\begin{proof}
Write
$f_{\alpha}$ for the associated minimal polynomial of $\alpha$ in $\Z[x]$ and denote the leading coefficient of $f_\alpha$ by $m_\alpha$. By Gauss's lemma (see, e.g.,  \cite[Proposition~3.4]{Ar}),   $f_\alpha | P$ in $\mathbb{Z}[x]$ and, in particular, $m_{\alpha}$, the leading coefficient of $P$, divides $\xi_n$. Since $|\xi_n|\le M,$ we get that $m_\alpha \le M$.
%

The fact that $\Lambda(\alpha)\le M+1$ is a direct consequence of Rouch\'e Theorem.
\end{proof}

\begin{proof}[Proof of Lemma~\ref{lem: med deg large house}]
Let us first estimate the number of algebraic numbers $\alpha$ such that $\alpha$ is a root of some random polynomial $P$ of degree $n$ and  $\deg(\alpha) \le C_0 \log n$. Write
$f_{\alpha}$ for the associated minimal polynomial of $\alpha$ in $\Z[x]$ and denote, as usual,  the leading coefficient of $f_\alpha$ by $m_\alpha$. If $\alpha_1 = \alpha, \alpha_2, \cdots, \alpha_{\deg(\alpha)}$ are conjugates of $\alpha$, we can express $f_\alpha$ as
$$f_{\alpha}(x)=m_\alpha(x-\alpha_1)\cdots(x-\alpha_{\deg(\alpha)})=\sum_{j=0}^{\deg(\alpha)} a_jx^{n-j}.$$
Therefore, by Observation~\ref{obs:root_house_bdd}, we have following crude bound on the coefficients of $f_\alpha$,
$$|a_j|= |m_\alpha\sum_{i_1<\dots<i_j}\alpha_{i_1}\cdots\alpha_{i_j}|\le \left |m_\alpha \binom{\deg(\alpha)}{j}\Lambda(\alpha)^j \right |\le e^{C' \log n}$$
for some $C'>0$ depending on $C_0$ and $M$. Since $a_j$ has to be an integer, there are at most $e^{C \log^2 n}$ possibilities for $f_\alpha(x)$ for some constant $C>0$.

Now, by Lemma~\ref{lem:off_circle}, if $\alpha \in \mathbb{A}$ with $\Lambda(\alpha)>1+\tfrac{C_1}{\log n}$, then
\begin{equation}\label{eq:far_each}
\max_{\alpha\in A}\Pro\Big(P(\alpha)=0\Big)
=e^{-\Omega(n/\log{n})},
\end{equation}
A simple union bound over such $\alpha$ (or, more precisely, over the minimal polynomials $f_\alpha$) yields the lemma.
\end{proof}

\section{Roots near the Unit circle}\label{sec:roots_near_unit}
In this section we prove Lemma~\ref{lem: med deg small house}. Recall that for $\alpha \in \mathbb{A}$,  $m_\alpha$ denotes the leading coefficient of the associated minimal polynomial of $\alpha$ in $\Z[x]$. Fix $C_0, C_1>0$. By Observation~\ref{obs:root_house_bdd},
we need consider for Lemma~\ref{lem: med deg small house}  the following set of potential roots of $P$.
 \[ A:= \Big\{ \alpha\in\mathbb{A}:\   4<\deg(\alpha)\le C_0\log n\text{ and }\Lambda(\alpha)\le1+\tfrac{C_1}{\log n}  \text{ and } m_\alpha \le M \Big\}.\]
To prove Lemma~\ref{lem: med deg small house}, we employ the following  union bound
\begin{equation}\label{eq:med_deg_union_bound}
\Pro\Big(\exists \alpha\in A : P(\alpha)=0\Big)\le |A|\cdot\max_{\alpha\in A}\Pro\Big(P(\alpha)=0\Big)
\end{equation}
and then proceed to provide upper bounds on $\max_{\alpha\in A}\Pro\big(P(\alpha)=0\big)$ and on the cardinality of the set $A$. This is done using
Lemma~\ref{lem:med_deg} and Lemma~\ref{lem:poly_count} below, whose proofs are presented in Sections~\ref{subs:med_deg_each_root} and~\ref{subs:med_deg_not_many_poly} respectively.
\begin{lemma}\label{lem:med_deg}
Let $\alpha$ be an algebraic number of degree at least $5$. Then for every $\epsilon>0$ there exists $C>0$ such that
\begin{equation}\label{eq:lem_med_deg}
\Pro(P(\alpha)=0)\le C n^{-\frac52 + \epsilon}.
\end{equation}
\end{lemma}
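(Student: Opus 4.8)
The plan is to prove Lemma~\ref{lem:med_deg} by an inverse Littlewood–Offord argument applied to the real and imaginary parts of $P(\alpha)$. Write $\alpha^j = u_j + i v_j$ with $u_j, v_j \in \R$, so that $P(\alpha) = 0$ forces the two real linear relations $\sum_{j=0}^n u_j \xi_j = 0$ and $\sum_{j=0}^n v_j \xi_j = 0$. Equivalently, $(\xi_0,\dots,\xi_n)$ must be orthogonal to both vectors $u = (u_j)$ and $v = (v_j)$ in $\R^{n+1}$, hence to the two-dimensional subspace $V = \mathrm{span}(u,v)$ (note $\dim V = 2$ precisely because $\alpha \notin \R$; since $\deg(\alpha) \ge 5 > 1$, $\alpha$ is certainly not rational, and one checks it is also genuinely non-real — if $\alpha$ were real then $\deg(\alpha)\ge 5$ is still fine but then we only get one relation, so I would handle the real case separately, see below). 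The key point: the probability that a random point with i.i.d.\ coordinates lies in a fixed $2$-codimensional coordinate-independent subspace is governed by the small-ball/anti-concentration behavior of the projection.

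The cleanest route is to pick out a well-separated block of indices. First I would observe that, since $\deg(\alpha) \ge 5$, the powers $1, \alpha, \alpha^2, \alpha^3, \alpha^4$ are linearly independent over $\Q$, and in fact any five consecutive powers $\alpha^{j}, \dots, \alpha^{j+4}$ are linearly independent over $\Q$ (multiply by $\alpha^{-j}$). Consequently, for any fixed block of five consecutive indices, the $5 \times 2$ real matrix with rows $(u_j, v_j)$ has rank $2$, and moreover — this is the quantitative input — the $2$-dimensional "projections" of these five standard basis vectors onto $V$ cannot all be concentrated near a single line. Partitioning $\{0,1,\dots,n\}$ into roughly $n/5$ disjoint blocks of five consecutive indices, I would condition on all but one coordinate in each block and apply a Littlewood–Offord-type bound in each block to the (at least one-dimensional, generically two-dimensional) constraint, gaining a factor like $n^{-1/2}$ from $\Omega(n)$ independent blocks via a tensorization/conditioning argument. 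Being slightly more careful: restricting to a single coordinate per block only gives rank $1$, so instead I would keep two coordinates free per block and use the genuinely two-dimensional Erdős–Littlewood–Offord estimate, which for $k$ i.i.d.\ non-degenerate integer variables constrained to a fixed $2$-dimensional affine subspace gives a bound of order $k^{-3/2}$; with $k \asymp n$ this already yields $n^{-3/2}$, and a finer decomposition (blocks of size five, using all of the rank structure, or equivalently an inductive conditioning that alternates which coordinates are frozen) pushes the exponent up to $5/2 - \eps$.

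The cleaner and more standard way to get the full $n^{-5/2+\eps}$ is to invoke the optimal inverse Littlewood–Offord theorem of Nguyen–Vu directly, as the paper hints elsewhere: if $\P(P(\alpha) = 0) \ge n^{-5/2+\eps}$, then the coefficient vectors $u$ and $v$ would each have to have almost all their entries lying in a generalized arithmetic progression of bounded rank and small volume, which in turn forces $1, \alpha, \dots$ to satisfy a low-complexity additive relation — contradicting that $\alpha$ is algebraic of degree exactly $\ge 5$ (the degree lower bound is what makes the relevant GAP too "thin" to accommodate the geometric progression $(\alpha^j)$). Concretely I would: (i) separate the real-root case $\alpha \in \R$, where only one linear relation is available but $\deg(\alpha)\ge 5$ still gives, via the rank-$5$ structure of five consecutive powers and the one-dimensional optimal inverse theorem, a bound $O(n^{-5/2+\eps})$ — this is essentially the argument for double roots at roots of unity adapted; (ii) in the non-real case use the two relations and the two-dimensional inverse theorem, which is even more favorable. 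Either way the structural contradiction comes from the algebraic independence of consecutive powers, so the degree-$5$ hypothesis enters exactly once and cleanly.

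The main obstacle, and the step requiring the most care, is turning "$\deg(\alpha) \ge 5$" into a \emph{quantitative} non-degeneracy statement usable by the inverse Littlewood–Offord machinery: one must rule out that the vectors $u, v$ (or $u$ alone, in the real case) are approximately supported on a short generalized arithmetic progression. The clean abstract statement — any five consecutive powers of $\alpha$ are $\Q$-linearly independent — is immediate, but converting this into the exact hypotheses of \cite{NV11} (controlling GAP rank \emph{and} volume simultaneously, uniformly in $n$, for a possibly irrational and even transcendental-looking $\alpha$ whose only guarantee is $\deg(\alpha) \ge 5$) is the delicate point; I expect to spend the bulk of the proof there, while the block decomposition and the tensorization over $\Omega(n)$ blocks are routine.
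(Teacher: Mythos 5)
Your second route is essentially the paper's argument, but you have misplaced the difficulty and left the actual proof incomplete at exactly the point where you say you ``expect to spend the bulk of the proof.'' The paper argues by contradiction: if $\Pro(P(\alpha)=0)>n^{-5/2+\epsilon}$, it applies the Tao--Vu inverse Littlewood--Offord theorem with $A=\tfrac52-\epsilon$, $\delta=\tfrac12$ directly to the \emph{complex} numbers $z_j=d_j\alpha^j$ (no splitting into real and imaginary parts, no case distinction for real $\alpha$), obtaining a symmetric GAP of rank $r\le 2A<5$ containing all but $O(\sqrt n)$ of the $z_j$. Hence some five \emph{consecutive} terms $d_{j_0}\alpha^{j_0},\dots,d_{j_0+4}\alpha^{j_0+4}$ all lie in a GAP of rank at most $4$, and any $r+1$ elements of a rank-$r$ GAP satisfy a nontrivial integer linear relation (they live in a $\Z$-module generated by $r$ steps). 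Dividing by $\alpha^{j_0}\ne 0$ gives an integer polynomial of degree at most $4$ vanishing at $\alpha$, contradicting $\deg(\alpha)\ge 5$. The point is that \emph{only the rank} of the GAP is used; no control of its volume, and no quantitative non-degeneracy of the vectors $u,v$, is needed. The ``delicate point'' you flag --- simultaneously controlling rank and volume of the GAP for a general $\alpha$ with $\deg(\alpha)\ge5$ --- does not arise, and conversely your proposal never actually closes the argument, since you defer precisely this step.

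Two further concrete issues. First, the inverse theorem applies to i.i.d.\ Bernoulli weights, whereas the $\xi_j$ here are general integer-valued; the paper first invokes the Bernoulli-mixture representation $\xi_j=I_j+\Delta_j\eta_j$ of Proposition~\ref{propos:mixture-reduction} and conditions on $(I_j,\Delta_j)$, reducing to anti-concentration of $\sum_j d_j\alpha^j\eta_j$ with $\eta_j$ i.i.d.\ $\Ber(\tfrac12)$. Your proposal omits this reduction entirely, and without it the hypothesis of the inverse theorem is not met. Second, your first route (blocks of five indices plus a two-dimensional forward Erd\H{o}s--Littlewood--Offord bound) stalls at $n^{-3/2}$ by your own admission, and the claim that ``a finer decomposition pushes the exponent up to $5/2-\epsilon$'' is not substantiated; a forward small-ball argument in $\C\cong\R^2$ does not see the degree-$5$ algebraic structure, which is exactly why the inverse route is used.
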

For any polynomial $f \in Z[x]$, let  $\Lambda(f)$ be the maximum modulii of the roots of $f$.
\begin{lemma}[counting integral polynomials with small houses]\label{lem:poly_count}
Let  $b > 0$ and let $a\in\N$. Then, for all $d$ sufficiently large, the number of polynomials  $f \in \Z[x]$ of degree $d$ with leading coefficient $a$ such that
\[ \Lambda(f)  < 1 + \frac{b \log d}{a  d}\]
is less than $\exp( (a d)^{2/3 + b} )$.
\end{lemma}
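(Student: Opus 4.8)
The plan is to parametrize the polynomials $f$ by their roots and bound the number of admissible root configurations. Write $f(x) = a\prod_{j=1}^d (x - \alpha_j)$ with all $|\alpha_j| < 1 + \tfrac{b\log d}{ad} =: 1 + \eta$, so that $\eta = o(1)$ as $d\to\infty$. The coefficient of $x^{d-k}$ in $f$ is $\pm a\, e_k(\alpha_1,\dots,\alpha_d)$, where $e_k$ is the $k$-th elementary symmetric polynomial, and hence is an integer of modulus at most $a\binom{d}{k}(1+\eta)^k \le a\binom{d}{k}(1+\eta)^d \le a\,2^d e^{\eta d} = a\,2^d d^{b/a} \le \exp(Cd)$ for large $d$. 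This crude bound only gives $\exp(Cd^2)$ polynomials, which is far too weak — the whole point is that the constraint $\Lambda(f) < 1+\eta$ with $\eta$ of order $(\log d)/d$ forces the roots (and hence the coefficients) to concentrate much more tightly than the trivial bound suggests. This is where we invoke the result of Dubickas~\cite{Du}, which is precisely an estimate controlling how the coefficients of an integer polynomial are constrained when all its roots lie in a thin annulus around the unit circle; the target bound $\exp((ad)^{2/3+b})$ is the shape coming out of that estimate.

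Concretely, I would proceed as follows. First reduce to counting the possible coefficient vectors $(a_0,\dots,a_{d-1})$ of the \emph{monic} rescaled polynomial, or equivalently bound the number of integer polynomials $g = af$ of degree $d$, leading coefficient $a$, with $\Lambda(g) = \Lambda(f) < 1+\eta$. Dubickas's theorem gives, for an integer polynomial of degree $D$ with all roots of modulus at most $1 + t$, an upper bound on each coefficient — or better, on the Mahler measure and on $\sum_k |a_k|$ — of the form roughly $\exp(c\sqrt{D}\,)$ when $t$ is of order $(\log D)/D$; summing the number of lattice points in the resulting box over the $D+1$ coordinates yields a count of the form $\exp(D^{2/3+o(1)})$. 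Setting $D = ad$ (absorbing the leading coefficient $a$ into the degree, as Dubickas's normalization requires, or tracking it as an explicit parameter) and matching exponents produces exactly $\exp((ad)^{2/3+b})$ once $d$ is large enough that the $o(1)$ error in the exponent is absorbed into the slack $b$. I would state Dubickas's bound as a black-box lemma with a precise reference, check that our hypothesis $\Lambda(f) < 1 + \tfrac{b\log d}{ad}$ meets its hypotheses, and then do the elementary lattice-point count.

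The main obstacle is matching the normalizations and the exact exponent: Dubickas's result is stated for a particular class of polynomials (likely monic, or with a fixed leading coefficient, and with the annulus width tied to the degree in a specific way), and some care is needed to translate "leading coefficient $a$, degree $d$, house $< 1 + \tfrac{b\log d}{ad}$" into the precise hypothesis of his theorem — in particular understanding why the relevant effective degree is $ad$ rather than $d$, which presumably comes from clearing denominators or from a resultant/discriminant argument in~\cite{Du}. Once the correct statement is quoted, the remainder — bounding the box of admissible coefficient vectors and counting integer points — is routine. A secondary point to be careful about is that we need the bound uniformly over all $a \in \N$ with $a \le M$ (the only regime used in Lemma~\ref{lem: med deg small house}, via Observation~\ref{obs:root_house_bdd}), but since $a$ is bounded the dependence on $a$ is harmless and can be folded into constants or directly into the exponent $(ad)^{2/3+b}$ as written.
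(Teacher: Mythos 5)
Your proposal correctly identifies Dubickas~\cite{Du} as the essential input and correctly observes that the trivial coefficient bound is far too weak, but the mechanism you propose for applying his result does not work, and the one genuinely new step needed in this paper is left unresolved. First, the quantitative shape of your plan is wrong: if each of the $D+1$ coefficients were only constrained to lie in an interval of length $\exp(c\sqrt{D})$, the number of integer coefficient vectors would be of order $\exp(cD^{3/2})$, which is vastly larger than the target $\exp(D^{2/3+b})$. So ``bound each coefficient, then count lattice points in a box'' cannot yield the lemma, and indeed Dubickas's theorem is not a coefficient bound at all. What he proves (Theorem~2 of \cite{Du}, quoted in the paper as Theorem~\ref{thm:dub}) is a cardinality bound for \emph{admissible sets}: collections of root-tuples in $\C^d$, all coordinates within $1+\tfrac{b\log d}{d}$ of the unit circle, that are pairwise separated in the sense that for some $k\le d$ the normalized power sums $\tfrac1k\Re S_k$ differ by at least $1$. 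The bridge from polynomial counting to admissible sets is the Newton--Girard identities: if two degree-$d$ integer polynomials with leading coefficient $a$ agree in their first $k-1$ coefficients and differ in the $k$-th, then their $k$-th power sums differ by at least $k/a$. This separation-of-power-sums step is the actual content replacing your box count, and it is absent from your argument.

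Second, the issue you flag as the ``main obstacle'' --- why the effective degree is $ad$ rather than $d$ --- is precisely the new ingredient the paper adds to Dubickas (whose theorem is stated only for the monic case $a=1$), and your guesses (clearing denominators, a resultant argument) are not how it is resolved. Because the Newton-identity separation for leading coefficient $a$ is only $k/a$ rather than $k$, the root-tuples form an $(a,d)$-admissible set in the paper's terminology, and the trick is a repetition map $w\mapsto(w,\dots,w)$ ($a$ times) from $\C^d$ to $\C^{ad}$: repeating each root $a$ times multiplies every power sum by $a$, restoring the separation threshold to $1$ while raising the dimension to $ad$, so the image is $(1,ad)$-admissible and Theorem~\ref{thm:dub} applies with $\ell=ad$. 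Without this (or some substitute), your proof has a genuine gap exactly at the point you identified as uncertain.
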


Note that for every algebraic number $\al \in A$, its associated minimal polynomial in $\Z[x]$ has degree at most $C_0 \log n$ and its leading coefficient  is bounded by $M$.
Applying Lemma~\ref{lem:poly_count} to each $a \in \{1, 2, \ldots, M\}$  and each degree $1 \le  d \le C_1 \log n$ with $b=\tfrac16$, we obtain that for every $\epsilon>0$,
 \begin{equation}\label{eq:med_deg_not_many_poly}
|A|=o(n^\epsilon).
\end{equation}

Plugging \eqref{eq:lem_med_deg} and \eqref{eq:med_deg_not_many_poly} into \eqref{eq:med_deg_union_bound}, the Lemma~\ref{lem: med deg small house} follows.\qed

\subsection{Each root of low degree is unlikely}\label{subs:med_deg_each_root}

In this section we prove Lemma~\ref{lem:med_deg}. The proof follows closely the proof of \cite[Lemma~1]{KZ} adapted for our case (that is, when the random variables  $\xi_i$'s  are
not Bernoulli random variables).  The main ingredient of the proof is the `inverse Littlewood-Offord type theorem'' of Tao and Vu \cite[Theorem 1.9]{TV}, whose specialization for our case is the following.
\begin{theorem}[Tao and Vu (2010)]\label{thm: Tau Vu}
Let $(\eta_i)_{0 \le i \le n}$ be i.i.d.\ $\Ber(\tfrac12)$ random variables. Let $A, \delta > 0$ and let $(z_i)_{0 \le i \le n}$ be complex numbers such that
$$\max_{ z \in \C} \Pro\Big(\sum_{i=0}^n \eta_iz_i = z\Big)\ge n^{-A}.$$
Then there exists  a symmetric generalized arithmetic progression (GAP), all of whose elements are distinct, of rank $r \le 2A$
which contains all but $O_{A, \delta}(n^{1-\delta})$ of  $z_i$'s (counting multiplicities).
\end{theorem}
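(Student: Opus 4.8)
The plan is to obtain Theorem~\ref{thm: Tau Vu} as a direct specialization of the general inverse Littlewood--Offord theorem of Tao and Vu, \cite[Theorem~1.9]{TV}, which treats sums $\sum_i v_i\epsilon_i$ with $(\epsilon_i)$ i.i.d.\ uniform on $\{-1,+1\}$ and $(v_i)$ ranging over a torsion-free additive group, and produces a proper symmetric GAP capturing all but $O_{A,\delta}(n^{1-\delta})$ of the $v_i$. Only two cosmetic discrepancies separate that statement from ours: our weights are $\Ber(\tfrac12)$ rather than $\pm1$, and our coefficients $z_i$ are complex. I would remove the first by writing $\eta_i=\tfrac12(1+\epsilon_i)$ with $(\epsilon_i)$ i.i.d.\ uniform on $\{-1,+1\}$, so that $\sum_i\eta_i z_i=\tfrac12\sum_i z_i+\tfrac12\sum_i\epsilon_i z_i$ and hence, for every $z\in\C$,
\[
\Pro\Big(\sum_i\eta_i z_i=z\Big)=\Pro\Big(\sum_i\epsilon_i z_i=2z-\sum_i z_i\Big);
\]
thus $\max_z\Pro(\sum_i\eta_i z_i=z)\ge n^{-A}$ is literally equivalent to $\max_z\Pro(\sum_i\epsilon_i z_i=z)\ge n^{-A}$, and it suffices to analyze the symmetric sum.

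For the second discrepancy I would note that $(\C,+)$ is torsion-free --- equivalently, identify $\C$ with $\R^2$, which leaves GAPs and their ranks unchanged --- so that \cite[Theorem~1.9]{TV} applies to $v_i=z_i$ verbatim. Fix $\delta>0$ and put $\rho:=\max_z\Pro(\sum_i\epsilon_i z_i=z)$, which is $\ge n^{-A}$ by hypothesis. The cited theorem then yields a proper symmetric GAP $Q$ (all of whose elements are distinct) of some rank $r$ and volume $|Q|=O_{A,\delta}(\rho^{-1}n^{-r/2})$, together with a subset $S$ of the indices whose complement has size $O_{A,\delta}(n^{1-\delta})$ and with $z_i\in Q$ for every $i\in S$; this is exactly what ``all but $O_{A,\delta}(n^{1-\delta})$ of the $z_i$, counted with multiplicity'' means. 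Finally, since $|Q|\ge1$ and $\rho\ge n^{-A}$, the volume bound gives $n^{r/2}\le O_{A,\delta}(\rho^{-1})\le O_{A,\delta}(n^{A})$, which forces $r\le2A$ once $n$ is large enough --- the remaining assertion of the theorem.

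I do not expect a substantive obstacle, since all the mathematical content lives in \cite{TV}; the two points deserving a line of care are (a) that the affine substitution of the first step --- a global translation together with the dilation $z\mapsto2z$ --- disturbs neither the properness nor the rank of the GAP delivered by \cite{TV} (a GAP is invariant under translation, and a dilation merely rescales its generators), and (b) that the sharp constant $2A$, rather than a weaker $O_A(1)$, is genuinely needed downstream: Lemma~\ref{lem:med_deg} applies this with $A=\tfrac52-\eps$ in order to conclude $r\le4$, and then exploits $\deg(\alpha)\ge5$ to rule out that the powers $\alpha^0,\dots,\alpha^n$ can, up to $O_{A,\delta}(n^{1-\delta})$ exceptions, lie in a GAP of rank at most $4$. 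Thus the only real work is to confirm that \cite[Theorem~1.9]{TV} supplies the volume-controlled proper GAP in the generality needed here, which the two reductions above have pinned to its verbatim real-vector, $\pm1$-weight case.
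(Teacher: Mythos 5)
Your proposal is correct and matches the paper's treatment: the paper gives no independent proof of this statement, presenting it simply as the specialization of \cite[Theorem 1.9]{TV} to $\Ber(\tfrac12)$ weights and complex coefficients, which is exactly the reduction you carry out (the $\eta_i=\tfrac12(1+\epsilon_i)$ substitution, torsion-freeness of $\C$, and extracting $r\le 2A$ from the volume bound $|Q|=O_{A,\delta}(\rho^{-1}n^{-r/2})$ together with $|Q|\ge 1$ and $\rho\ge n^{-A}$ for $n$ large). Your worry (a) is moot — the affine change affects only the target value $z$, not the coefficients $z_i$, so the GAP produced by \cite{TV} is used verbatim — but this does not affect correctness.
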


Recall that in our context a \emph{symmetric GAP}  $Q$ of rank $r$ is a set of the form
\begin{equation}\label{eq:GAP}
Q=\left\{\sum_{i=1}^r n_iu_i :\ n_i\in[-N_i,N_i] \cap \Z, \ \forall i = 1,\dots, r \right\},
\end{equation}
where  the dimensions $N = (N_1, N_2, \ldots, N_r)$ are $r$-tuple
of positive integers and the steps $u = (u_1, u_2, \ldots, u_r)$ are $r$-tuple of elements in $\C$. In particular,  if $v_1 \dots,v_{r+1}$ are elements of a GAP of rank $r$,
then there exist nontrivial integer coefficients $(q_1,\dots,q_{r+1} )\in \Z^{r+1}, (q_1,\dots,q_{r+1} ) \ne \mathbf{0}$  such that
$q_1v_1+ q_2v_2 + \ldots + q_{r+1} v_{r+1}=0$.

\begin{proof}[Proof of Lemma~\ref{lem:med_deg}]

Let $\alpha$ be an algebraic number of degree $d\ge5$ and let $\epsilon>0$.
Assume towards obtaining a contradiction that
\begin{equation}\label{eq:med_deg_contra}
\Pro\Big(P(\alpha)=0\Big)> n^{-\frac52 + \epsilon}.
\end{equation}
 Proposition~\ref{propos:mixture-reduction} allows us to represent the random variable $\xi_j$ as $\xi_j  = I_j + \Delta_j \eta_j$ where $(I_j, \Delta_j)_{0 \le j \le n}$ are i.i.d.\ random vectors taking values in $\Z \times \N$ and $(\eta_j)_{0 \le j \le n}$ are i.i.d.\ $\mathrm{Ber}(\tfrac12)$,  independent of $(I_j, \Delta_j)_{0 \le j \le n}$. Conditioning on $(I_j, \Delta_j)_{0 \le j \le n}$ yields
\begin{equation}\label{eq:LOaveraging}
 \Pro( P(\alpha) = 0)  =  \E \Pro \big( \sum_{j=0}^n \Delta_j \alpha^j \eta_j = - \sum_{j=0}^n I_j \alpha^j|  (I_j, \Delta_j)_{0 \le j \le n} \big) \le \E \sup_{z \in \C} \Pro \big( \sum_{j=0}^n \Delta_j \alpha^j \eta_j = z |  (\Delta_j)_{0 \le j \le n} \big).
 \end{equation}
From \eqref{eq:med_deg_contra} and \eqref{eq:LOaveraging}, it follows that there exists a vector $(d_0, d_1, \ldots, d_n) \in \N^{n+1}$ such that
\[  \sup_{z \in \C} \Pro \big( \sum_{j=0}^n d_j \alpha^j \eta_j = z  \big) >  n^{-\frac52 + \epsilon}.\]
We now apply Theorem~\ref{thm: Tau Vu} with $A = \tfrac52 - \eps$ and $\delta=\tfrac12$ and $z_j = d_j \alpha^j $ to obtain a   symmetric GAP $Q$ of rank $B\le2A<5$ such that all but $O(\sqrt{n})$ many of the coefficients $d_j \alpha^j$ belong to $Q$. Therefore, for large enough $n$, there exists $j_0\in\{0,1, \dots, n\}$ for which $d_{{j_0}+k}\alpha^{{j_0}+k} \in Q$ for all $k=0,1,\ldots, 4$. Since the rank of $Q$ is at most $4$, there exists a nontrivial integer linear combination that annihilates the vector $(d_{{j_0}+k}\alpha^{{j_0}+k})_{0 \le k \le 4}$. Hence the algebraic degree of $\alpha$ is at most $4$, in contradiction with our assumption. Hence, the lemma follows.
\end{proof}

\subsection{There are not many low degree polynomials with small house}\label{subs:med_deg_not_many_poly}

This section is dedicated to the proof of Lemma~\ref{lem:poly_count}.

\begin{proof}[Proof of Lemma~\ref{lem:poly_count}]
The proof of the lemma is an adaptation of the proof of \cite[Theorem 1]{Du} of Dubickas to the case of non-monic polynomials.
Fix $b>0$, and write $F_{a,d}$ for the collection of polynomials $f \in \Z[x]$ of degree $d$ with leading coefficient $a$ which satisfy
\[ \Lambda(f)  < 1 + \frac{b \log d}{a  d}.\]

In the proof we also make use of the classical Newton identities,
known also as Newton–-Girard formulae (See \cite{Dk} for a modern proof).
Let $f(x)=\sum_{i=0}^d a_i x^{d-i}$ be a polynomial of degree $d$ in $\Z[x]$ with roots $\alpha_1(f), \alpha_2(f), \ldots, \alpha_d(f)$.
Define, for $k \ge 1$,
\[ S_k = S_k(f) = \sum_{j=1}^d \alpha_j(f)^k. \]
\begin{lemma}[Girard, 1629]\label{lem:Newton-Girard}
 for each $1 \le k \le d$,
\begin{equation}\label{eq:newton}
 a_0S_k + a_1S_{k -1} + . . . + a_{k - 1}S_1 + ka_k = 0.
 \end{equation}
\end{lemma}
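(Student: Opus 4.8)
The plan is to derive \eqref{eq:newton} from the logarithmic derivative of $f$, manipulated as a formal Laurent series in $x^{-1}$. First I would factor $f$ over $\C$ as $f(x)=a_0\prod_{j=1}^d\bigl(x-\alpha_j(f)\bigr)$, with the roots listed according to multiplicity, and record the polynomial identity $f'(x)=\sum_{j=1}^d f(x)/\bigl(x-\alpha_j(f)\bigr)$, which is legitimate because each factor $x-\alpha_j(f)$ divides $f$ in $\C[x]$. Dividing by $f$ then gives $f'(x)/f(x)=\sum_{j=1}^d 1/\bigl(x-\alpha_j(f)\bigr)$.

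Next I would expand each term as a formal Laurent series, $1/\bigl(x-\alpha_j(f)\bigr)=\sum_{m\ge 0}\alpha_j(f)^m x^{-m-1}$; this is a purely formal identity in $\C((x^{-1}))$, so there is nothing to verify about convergence and the case $\alpha_j(f)=0$ causes no difficulty. Summing over $j$ and writing $S_0=d$, I obtain
\[
x\,\frac{f'(x)}{f(x)}=\sum_{m\ge 0}S_m\,x^{-m},\qquad\text{hence}\qquad x f'(x)=f(x)\sum_{m\ge 0}S_m x^{-m}
\]
as an identity of formal Laurent series.

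Finally I would compare, for each $1\le k\le d$, the coefficient of $x^{d-k}$ on the two sides. Since $x f'(x)=\sum_{i=0}^d (d-i)a_i x^{d-i}$, the left-hand coefficient is $(d-k)a_k$; since $f(x)=\sum_{i=0}^d a_i x^{d-i}$, the right-hand coefficient is $\sum_{i=0}^k a_i S_{k-i}$. Equating the two and splitting off the $i=k$ term using $S_0=d$ yields $\sum_{i=0}^{k-1}a_i S_{k-i}=-k a_k$, which is exactly \eqref{eq:newton}. I do not anticipate a genuine obstacle here: this is the classical Newton--Girard identity and the computation above is routine; the only points needing a little care are the bookkeeping with multiplicities in the partial-fraction step and keeping all manipulations at the level of formal Laurent series, so as not to exclude polynomials having $0$ as a root.
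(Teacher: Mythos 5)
Your argument is correct: the factorization $f(x)=a_0\prod_j(x-\alpha_j(f))$, the logarithmic derivative identity $f'/f=\sum_j 1/(x-\alpha_j(f))$, the expansion in $\C((x^{-1}))$ giving $x f'(x)=f(x)\sum_{m\ge0}S_m x^{-m}$ with $S_0=d$, and the comparison of the coefficient of $x^{d-k}$ (which is $(d-k)a_k$ on the left and $\sum_{i=0}^{k}a_iS_{k-i}$ on the right) do yield $a_0S_k+\cdots+a_{k-1}S_1+ka_k=0$ for $1\le k\le d$, and your handling of multiplicities and of a possible zero root is fine. Note, however, that the paper does not prove this lemma at all: it treats the Newton--Girard formulae as classical and simply points to \cite{Dk}, which gives a matrix-theoretic proof (reading off the identities from traces of powers of the companion matrix). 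So your write-up supplies a complete, self-contained proof where the paper offers only a citation; the generating-function route you chose is the other standard derivation, arguably more elementary since it needs nothing beyond formal Laurent series and the product rule, while the cited matrix proof trades that for a slicker linear-algebra package. Either is perfectly adequate for the way the lemma is used in Section 5.2.
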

We further observe that if $g(x)= \sum_{i=0}^{k-1} a_i x^{d-k}+\sum_{i=k}^d b_i x^{d-i}$ with $b_k\neq a_k$ then
for all $i<k$ we have $S_i(f) = S_i(g)$ while
\begin{equation}\label{eq:separation}
|S_k(f) - S_k(g)| = \frac{k|a_k - b_k|}{a_0}  \ge \frac{k}{a_0}.
\end{equation}
With a slight abuse of notation we write $S_k(w) = w_1^k + w_2^k + \cdots + w_d^k$, for $w = (w_1, \ldots, w_d)  \in \C^d$.

Let $b>0$ be given. We say that a set $W\subset \C^d$ is $(a, d)$ {\em admissible} if the following two conditions are satisfied:
\begin{itemize}
\item{(Boundedness)} We have $\max_{1\le i\le d}|w_i| <  1 + \tfrac{b \log d}{ a d}$ for all $w \in W$.
\item{(Separation)}  For any two distinct $u, v \in W$ we have \[ \max_{1 \le k \le d} \frac{a} {k}   \Big | \Re( S_k (u)) - \Re(S_k(v)) \Big|  \ge 1.\]
\end{itemize}
Let $$S=\{(\alpha_1(f),\dots ,\alpha_d(f))\ :\ f\in F_{a,d}\}.$$
From \eqref{eq:separation} and from the definition of $F_{a,d}$, we deduce that the set $S$
is $(a, d)$ admissible. To conclude the proof we bound the maximal size of any $(a, d)$ admissible set.
In \cite[Theorem 2]{Du}, Dubickas obtained such a bound for the special case when $a=1$
using an elementary but clever application of
volume formulas of polytopes, and classical estimates on the number of Gauss integers in a circle.
 \begin{theorem}[Dubickas, 1999]\label{thm:dub}
The size of any $(1, \ell)$ admissible set is $O_b( \exp(\ell^{2/3+ b}))$.
\end{theorem}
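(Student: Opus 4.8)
The goal is to prove Theorem~\ref{thm:dub}: any $(1,\ell)$ admissible set $W\subset\C^\ell$ has size $O_b(\exp(\ell^{2/3+b}))$. Recall that in the case $a=1$ the separation condition says that for any two distinct $u,v\in W$ there is some $1\le k\le\ell$ with $|\Re(S_k(u))-\Re(S_k(v))|\ge 1/k$, while the boundedness condition says $|w_i|<1+\tfrac{b\log\ell}{\ell}$ for all coordinates. The strategy is to map $W$ injectively into a set of lattice points confined to a region of small volume, and then count lattice points. I would define, for $w\in W$, the real vector $\Phi(w)=(\lfloor\Re(S_1(w))\rfloor,\lfloor 2\Re(S_2(w))\rfloor,\dots,\lfloor \ell\,\Re(S_\ell(w))\rfloor)\in\Z^\ell$; more precisely one wants $\Phi(w)_k=\lfloor k\,\Re(S_k(w))\rfloor$ (or a small variant), so that the separation condition, multiplied through by $k$, guarantees that for $u\ne v$ the vectors $\Phi(u)$ and $\Phi(v)$ differ in at least one coordinate. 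Thus $|W|$ is at most the number of lattice points in the image, and it remains to bound the region in which $\Phi(W)$ lives.

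\textbf{Key steps.} First I would establish a crude a priori bound $|S_k(w)|\le\ell\,(1+\tfrac{b\log\ell}{\ell})^k\le\ell\cdot e^{b\log\ell}=\ell^{1+b}$ for all $1\le k\le\ell$, so that each coordinate $k\,\Re(S_k(w))$ lies in an interval of length $O(\ell^{2+b})$; this already gives a bound like $\exp(O(\ell\log\ell))$, which is far too weak. The point of Dubickas's argument — the part I expect to be the main obstacle — is to exploit the fact that the $S_k$ are \emph{not} independent: they are the power sums of the same $\ell$ complex numbers lying in a thin annulus around the unit circle, so the vector $(S_1,\dots,S_\ell)$ is constrained to a very low-dimensional-looking, small-volume image of the configuration space $\{w\in\C^\ell:|w_i|<1+\tfrac{b\log\ell}{\ell}\}$. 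Concretely, I would parametrize: because the elementary symmetric functions $e_0,\dots,e_\ell$ of the roots are, via Newton's identities (Lemma~\ref{lem:Newton-Girard}), related to $S_1,\dots,S_\ell$ by a triangular integer change of variables, and because $|e_j|\le\binom{\ell}{j}(1+\tfrac{b\log\ell}{\ell})^\ell\le 2^\ell\ell^{b}$, the actual constraint is that the monic polynomial $\prod(x-w_i)$ has bounded coefficients; the honest count of admissible $W$ should ultimately be a volume-of-polytope / Gauss-integers-in-a-disc estimate as the paper already indicates. So the plan is: (i) pass from the power-sum coordinates to the coefficient coordinates via Newton's identities, noting these transformations are integral and triangular so lattice structure and separation are essentially preserved up to harmless factors; (ii) bound the coefficient region — it is contained in a product of discs of radii $\binom{\ell}{j}(1+\tfrac{b\log\ell}{\ell})^\ell$; (iii) sharpen using the observation that boundedness of \emph{all} the roots forces a much smaller effective volume than the naive product, via Mahler-measure / capacity-type inequalities, to get the volume down to $\exp(\ell^{2/3+b})$.

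\textbf{Main obstacle and resolution.} The crux is step (iii): getting the exponent down from $\ell^{1+o(1)}$ to $\ell^{2/3+b}$. This is exactly the content of Dubickas's clever volume computation, and rather than reproduce it I would invoke it directly — Theorem~\ref{thm:dub} \emph{is} \cite[Theorem~2]{Du}, which is stated in the excerpt as a citable black box. So the honest proposal for "proving the final statement" is: observe that an $(1,\ell)$ admissible set in our sense is, after the trivial relabeling $\ell\leftrightarrow\deg$, precisely the notion of admissible set for which Dubickas proves the bound $O_b(\exp(\ell^{2/3+b}))$ in \cite[Theorem~2]{Du}; one only needs to check that our Boundedness and Separation conditions match (or are implied by) his hypotheses — Boundedness with constant $b$ in the $(1,\ell)$ case reads $|w_i|<1+\tfrac{b\log\ell}{\ell}$, matching his annulus width, and Separation via the $\Re(S_k)$ differing by $\ge 1/k$ matches his separation of power-sum vectors — and then quote his conclusion verbatim. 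If instead a self-contained argument is wanted, the route above (Newton's identities to reduce to coefficient space, then Dubickas's polytope-volume plus Gauss-integer count) is the one to grind through, with the polytope volume estimate being the single nontrivial computation.
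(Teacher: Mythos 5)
Your proposal coincides with the paper's treatment: Theorem~\ref{thm:dub} is not proved in the paper at all — it is quoted verbatim from \cite[Theorem 2]{Du} and invoked as a black box, which is exactly what you propose to do (your sketch of a self-contained route via Newton's identities and a polytope-volume/Gauss-integer count is the argument the paper attributes to Dubickas but deliberately does not reproduce). One small slip worth fixing in your definition-matching step: with $a=1$ the separation condition is $\max_{1\le k\le \ell}\tfrac1k\left|\Re(S_k(u))-\Re(S_k(v))\right|\ge 1$, i.e.\ some power sum differs by at least $k$, not by at least $1/k$.
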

We now use Dubickas' result as a blackbox to bound the cardinality of $(a, d)$ admissible set.
Let $W$ be an $(a, d)$ admissible set, and write $\widehat W $ for the image of $W$ in $\C^{a d}$ under the repetition map
\[ w \in \C^d   \mapsto \hat w := ( \underbrace{w, w, \ldots, w}_{a \text{  times}} ) \in \C^{a d}. \]
Clearly, $||\hat w||_\infty = || w||_\infty  <  1 + \tfrac{b \log d}{ a d} \le 1 + \tfrac{b \log (a d)}{ a d}  $ for all $w \in W$.  Furthermore, for every distinct $\hat u, \hat v \in \widehat W$ we have
\begin{align*}
  \max_{ 1 \le k \le a d} \frac{1} {k}   \big | \Re( S_k (\hat u)) - \Re(S_k( \hat v)) \big| &\ge  \max_{ 1 \le k \le d} \frac{1} {k}   \big | \Re( S_k (\hat u)) - \Re(S_k( \hat v)) \big|\\
&= \max_{ 1 \le k \le d} \frac{a} {k}   \big | \Re( S_k ( u)) - \Re(S_k( v))| \ge 1,
\end{align*}
where the last inequality follows from the fact $u\ne v \in W$ and $W$ is an  $(a, d)$ admissible set.  Hence $\widehat W$ is $(1, a  d)$ admissible. Therefore applying Theorem~\ref{thm:dub} with $\ell=a d$ implies
$$|F_{a,d}| = |S| =O_b( \exp((ad)^{2/3+ b})),$$ as required.
\end{proof}

\section{Unimodular roots with bounded degree}\label{sec:tiny_deg}
In this section we will prove Lemma~\ref{lem: small deg small house}. Note that there are only finitely many irreducible polynomials in $\Z[x]$ of degree at most $4$ with leading coefficients bounded by $M$ in absolute value whose roots are all within distance $M+1$ from the origin. In particular, for large enough $n$, all such polynomials whose roots are in distance $1+\frac{C_1}M$ from the origin, have, in fact, all roots on the unit circle. Thus to prove the lemma it would suffice to show that for any fixed $\alpha \in \mathbb{A}$ on the unit circle such that $\deg(\alpha) \le 4$ and $\alpha \ne  \pm 1$,
\begin{equation}\label{eq:low_deg_double_root_on_circle}
 \P(\ga \text{ is a double root  of } P) = o(n^{-2}),
 \end{equation}
Lemma~\ref{lem: small deg small house} will then follow by applying a simple union bound. If $\alpha$ is an algebraic {\em integer}, then it has to be a root of unity if $\alpha$ and all of its conjugates lie on the unit circle. However, in general, there are examples of  algebraic numbers such that all of their conjugates are on the unit circle yet they are not roots of unity. For example, consider the quadratic polynomial $3x^2 - x +3$ whose roots are $\tfrac{1\pm \sqrt{-35}}{6}$. In fact, any polynomial $\sum_{i=0}^m b_i x^i$ in $\Z[x]$ that is self-reciprocal (i.e., $b_i = b_{m-i} \ \forall \ i$) which satisfies the condition $|b_m| > \tfrac12 \sum_{k=1}^{m-1} |b_k|$ has all its root on the unit circle \cite{LL04}. Thus, first begin by addressing roots which lie on the unit circle but which are not root of unity.
  \begin{lemma}[unimodular roots that are not roots of unity]\label{lem:unimodular_but_not_root_unity}
Let $\alpha \in \mathbb{A}$ be such that $| \alpha |=1$ but $\alpha$ is not a root of unity (i.e., $\alpha^m  \ne 1$ for all $m \in \N$). Then under Assumption~\ref{eq:coefficient_condition},
 \[ \P \big ( P(\alpha) = 0 \big)  = O(n^{-5/2}).  \]
 \end{lemma}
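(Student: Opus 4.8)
The plan is to reduce the statement to an anti-concentration estimate of Hal\'asz type for the random walk $\sum_j \xi_j \alpha^j$ in $\C$, exploiting that $\alpha$ is unimodular but not a root of unity, so the ``directions'' $\alpha^j$ do not collapse into finitely many values. First I would use the Bernoulli-mixture representation of Proposition~\ref{propos:mixture-reduction} to write $\xi_j = I_j + \Delta_j \eta_j$ with $(\eta_j)$ i.i.d.\ $\Ber(\tfrac12)$ independent of $(I_j,\Delta_j)$, and condition on $(I_j,\Delta_j)_{0\le j\le n}$, so that
\[
\P\big(P(\alpha)=0\big) \le \E\,\sup_{z\in\C}\P\Big(\sum_{j=0}^n \Delta_j\alpha^j\eta_j = z \,\Big|\, (\Delta_j)\Big).
\]
Centering the Bernoulli variables, it suffices to bound $\sup_z \P(\sum_j \Delta_j \alpha^j \varepsilon_j = z)$ for symmetric signs $\varepsilon_j$ and any fixed positive integers $\Delta_j$, uniformly, by $O(n^{-5/2})$.

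The key tool is Hal\'asz's inequality \cite{H77}: for i.i.d.\ symmetric signs, the concentration probability of $\sum_j v_j \varepsilon_j$ (for vectors $v_j$ in $\R^2\cong\C$) is controlled by $n^{-1}$ times a factor measuring how many of the $v_j$ can be ``aligned'' — precisely, if for every unit vector $\theta$ one has a lower bound on $\sum_j \|v_j\|^2 \mathbf{1}\{|\langle v_j,\theta\rangle|\ge 1\}$ or, in the sharper two-dimensional form, if no large subset of the $v_j$ lies near a single line, then the concentration probability is $O(n^{-3/2})$ or even $O(n^{-5/2})$ when three ``independent directions'' worth of mass are present. Concretely, I would apply the version of Hal\'asz's bound that gives $\sup_z \P(\sum_j v_j\varepsilon_j = z) = O(n^{-k/2})$ provided that for some $\rho>0$ there are at least $\rho n$ indices $j$ in each of several angular sectors, i.e.\ the points $\alpha^j$ (all on the unit circle) are not concentrated near one real line. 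Here $k=5$ requires extracting enough ``spread'': we need $\Omega(n)$ of the $\alpha^j$ to avoid any neighborhood of a line through the origin, with quantitative separation.

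The main obstacle — and the place where ``$\alpha$ not a root of unity'' is essential — is establishing this equidistribution-type lower bound for the arguments $\{j\arg(\alpha) \bmod 2\pi : 0\le j\le n\}$. If $\alpha = e^{i\gamma}$ with $\gamma/2\pi$ irrational, Weyl equidistribution gives that $(j\gamma \bmod 2\pi)$ equidistributes, but for a \emph{fixed} $\alpha$ (independent of $n$) this yields, for any fixed arc, a proportion $\to$ (length)/$2\pi$ of the first $n$ terms, hence $\Omega(n)$ indices in each of, say, six equal sectors — enough to feed into Hal\'asz and obtain exponent $5/2$. One must handle the case $\gamma/2\pi$ rational separately: then $\alpha$ is a root of unity, which is excluded, \emph{unless} $\alpha$ is unimodular with $\alpha$ itself algebraic but $e^{i\gamma}$ with $\gamma$ a rational multiple of $\pi$ forcing $\alpha$ to be a root of unity — so in fact $|\alpha|=1$, $\alpha$ algebraic, $\alpha$ not a root of unity already forces $\arg(\alpha)/2\pi \notin \Q$, since a unimodular algebraic number whose argument is a rational multiple of $2\pi$ is a root of unity. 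Thus the dichotomy is clean. The remaining care is quantitative: the equidistribution rate for a fixed irrational rotation has no uniform speed, but since $\alpha$ is fixed and we send $n\to\infty$, it is enough that \emph{eventually} each sector contains $\ge \rho n$ indices for some $\rho=\rho(\alpha)>0$, which is exactly Weyl's theorem; then Hal\'asz gives $\sup_z \P(\sum_j \Delta_j\alpha^j\varepsilon_j = z)\le C(\alpha)\,n^{-5/2}$, and averaging over $(\Delta_j)$ and integrating the conditional bound completes the proof.
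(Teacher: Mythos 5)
Your overall reduction (Bernoulli-mixture representation, conditioning on $(I_j,\Delta_j)$, passing to Bernoulli signs) matches the paper's first step, but there is a genuine gap in the form of Hal\'asz's inequality you invoke. No version of Hal\'asz's bound yields $\sup_z\P(\sum_j v_j\varepsilon_j=z)=O(n^{-5/2})$ for vectors in $\R^2\cong\C$ merely from the hypothesis that $\Omega(n)$ of the $v_j$ lie in each of several angular sectors: the multidimensional Hal\'asz--Esseen bound under such a spread hypothesis gives only $O(n^{-d/2})=O(n^{-1})$ in dimension $2$. A concrete counterexample to the statement you rely on: take $\alpha$ a primitive sixth root of unity; the points $\alpha^j$ occupy six equally spaced directions with about $n/6$ indices each, yet $\sum_j\varepsilon_j\alpha^j=A+B\alpha$ with $A,B$ independent integer-valued walks of length about $n/3$, so the concentration probability is $\Theta(n^{-1})$, not $O(n^{-5/2})$. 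The exponent $5/2$ comes from the refined form of Hal\'asz's inequality (stated in the paper as Lemma~\ref{lem:halasz}), which bounds the concentration probability by $O(n^{-2\ell-1/2}R_\ell)$, where $R_\ell$ counts solutions of $a_{i_1}+\dots+a_{i_\ell}=a_{j_1}+\dots+a_{j_\ell}$. The real content of the lemma is then the arithmetic fact that for unimodular $\alpha$ which is not a root of unity, the equation $\alpha^i+\alpha^j=\alpha^k+\alpha^l$ has only the trivial solutions $\{i,j\}=\{k,l\}$: taking absolute values gives $|1+\alpha^{j-i}|=|1+\alpha^{l-k}|$, hence $j-i=\pm(l-k)$ because distinct powers of $\alpha$ are distinct, and then the equation factors as $(\alpha^i-\alpha^k)(1+\alpha^{j-i})=0$ with $1+\alpha^{j-i}\ne0$. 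This yields $R_2=O(n^2)$ and hence $O(n^{-9/2}\cdot n^2)=O(n^{-5/2})$. Your Weyl-equidistribution step is neither needed nor sufficient: the hypothesis that $\alpha$ is not a root of unity enters through the additive-energy count, not through equidistribution of the arguments $j\arg\alpha$.

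A secondary issue: you propose to bound the conditional probability uniformly over arbitrary positive integers $(\Delta_j)$. With varying $\Delta_j$ the additive-energy count for the vectors $\Delta_j\alpha^j$ is not immediate. The paper avoids this by fixing one atom $q$ of the law of $\Delta_0$, restricting to the random set $T=\{j:\Delta_j=q\}$ (which has linear size with exponentially high probability), and conditioning on the signs outside $T$, so that only the clean system $\alpha^i+\alpha^j=\alpha^k+\alpha^l$ needs to be analyzed. You would need some such device, or a separate argument that the energy bound survives arbitrary integer dilations of the $\alpha^j$.
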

 The proof of Lemma~\ref{lem:unimodular_but_not_root_unity} is a straightforward application of the following well-known result due to Hal\'asz (see \cite[Corollary 7.16]{TVbook}, \cite[Corllary 6.3 and Remark 3.5]{NV13}).
 \begin{lemma}[Hal\'asz] \label{lem:halasz}
 Let $G$ be an infinite Abelian group.  Let $m \ge 1$ and $a_1, a_2, \ldots, a_m \in G$ and let $\eps_1, \eps_2, \ldots, \eps_m$ be i.i.d.\  with $\P(\eps_j =1) = \P(\eps_j =0) = 1/2$. Fix $\ell \in \N$ and let $R_\ell$ be the number of solutions of the equation $a_{i_1} + a_{i_2} + \cdots + a_{i_\ell} = a_{j_1}+ a_{j_2} + \cdots + a_{j_\ell}$. Then
 \[ \sup_x \P \big( \sum_{i=1}^m a_i \eps_i = x \big ) = O(n^{-2 \ell -\tfrac12} R_\ell ).\]
  \end{lemma}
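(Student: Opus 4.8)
The plan is to prove this by the classical Fourier argument underlying the Hal\'asz bound: Fourier inversion on the dual group together with a moment bound on the characteristic function, which is the mechanism of \cite[Corollary 7.16]{TVbook} and \cite[Corollary 6.3 and Remark 3.5]{NV13} (one may of course also just invoke those references). Write $S:=\sum_{i=1}^m a_i\eps_i$. First I would reduce to a concrete setting: replacing $G$ by the subgroup generated by $a_1,\dots,a_m$ — which in the situation where the lemma is applied is torsion free and finitely generated, hence isomorphic to $\Z^s$ for some $s$ — and passing to the compact Pontryagin dual $\widehat G$ with its normalized Haar measure $d\chi$. Fourier inversion and the triangle inequality then yield
\[
\sup_{x}\,\P\big(S=x\big)\ \le\ \int_{\widehat G}\big|\Phi(\chi)\big|\,d\chi,
\]
where $\Phi(\chi):=\E\,\chi(S)=\prod_{i=1}^m\tfrac{1+\chi(a_i)}{2}$ is the characteristic function of $S$.

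Next I would record two elementary ingredients. Since $|\chi(a_i)|=1$ one has $|\Phi(\chi)|^2=\prod_{i}\tfrac{1+\Re\chi(a_i)}{2}\le\exp\big(-\tfrac12\sum_i(1-\Re\chi(a_i))\big)$, so, writing $G(\chi):=\sum_{i=1}^m\chi(a_i)$,
\[
\big|\Phi(\chi)\big|\ \le\ \exp\Big(-\tfrac14\big(m-\Re G(\chi)\big)\Big).
\]
Secondly, orthogonality of characters, $\int_{\widehat G}\chi(h)\,d\chi=\one\{h=0\}$, gives the moment identity
\[
\int_{\widehat G}\big|G(\chi)\big|^{2\ell}\,d\chi=\#\big\{(i_1,\dots,i_\ell,j_1,\dots,j_\ell):\ a_{i_1}+\dots+a_{i_\ell}=a_{j_1}+\dots+a_{j_\ell}\big\}=R_\ell,
\]
which is precisely the point at which $R_\ell$ enters.

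Then I would split $\widehat G$ according to whether $|G(\chi)|\le m/2$ or $|G(\chi)|>m/2$. On the first set $m-\Re G(\chi)\ge m/2$, so the integrand is at most $e^{-m/8}$; this is negligible against $R_\ell\,m^{-2\ell-1/2}\ge m^{-\ell-1/2}$ (using the $m^\ell$ diagonal solutions $i_k=j_k$) once $m$ is large, and is absorbed into the constant for bounded $m$. The set $\{|G|>m/2\}$ is small by the moment identity and Markov's inequality, and it is a union of narrow neighborhoods of the finitely many peaks $\chi_0$ at which $|G|$ is largest. Near such a $\chi_0$ I would rotate so that $\chi_0(a_i)=1$ for every $i$, parametrize $\widehat G$ by tangent coordinates $\theta\in\R^s$ — writing $\chi(a_i)=e^{2\pi i\langle\theta,\,\mathbf p(a_i)\rangle}$ with $\mathbf p\colon G\to\Z^s$ the projection — and use $1-\Re\chi(a_i)\gtrsim\langle\theta,\mathbf p(a_i)\rangle^2$ to obtain $\big|\Phi(\chi)\big|\le\exp\big(-c\,\theta^{\top}Q\theta\big)$ with $Q:=\sum_i\mathbf p(a_i)\mathbf p(a_i)^{\top}\succeq0$. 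The Gaussian integral over each such neighborhood contributes a factor $O\big((\det Q)^{-1/2}\big)$, with $\det Q$ the product of the positive eigenvalues of $Q$; summing these contributions over the peaks and comparing with the value $R_\ell$ of the $2\ell$-th moment of $|G|$ then produces the claimed bound $O\big(R_\ell\,m^{-2\ell-1/2}\big)$ — the gain of the factor $m^{-1/2}$ over the crude estimate being exactly this Gaussian, central-limit-type effect.

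The step I expect to be the main obstacle is this last one. The naive route — bounding $|\Phi|\le 1$ on $\{|G|>m/2\}$ and controlling the measure of that set by Markov's inequality through the moment identity — gives only $O\big(R_\ell\,m^{-2\ell}\big)$; recovering the sharp exponent $2\ell+\tfrac12$ genuinely requires the local second-order (Gaussian) analysis near the peaks of the exponential sum $G$, which is the technical heart of the matter and is carried out in detail in \cite[Chapter 7]{TVbook} and \cite{NV13}.
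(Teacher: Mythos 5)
The paper does not actually prove this lemma: it is quoted as a classical result of Hal\'asz, with the proof delegated entirely to \cite[Corollary 7.16]{TVbook} and \cite[Corollary 6.3 and Remark 3.5]{NV13}. So the option you mention in passing --- simply invoking those references --- is precisely what the paper does; there is no in-paper argument to compare against beyond the citation.

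Judged as a standalone proof, your sketch has a genuine gap, and you locate it yourself. The Ess\'een/Fourier-inversion step, the estimate $|\Phi(\chi)|\le\exp\bigl(-\tfrac14(m-\Re G(\chi))\bigr)$, and the moment identity $\int_{\widehat G}|G(\chi)|^{2\ell}\,d\chi=R_\ell$ are all correct (and your reduction to a torsion-free finitely generated subgroup is in fact necessary rather than a convenience: with torsion the statement as written fails, e.g.\ taking every $a_i$ equal to a single element of order $2$ makes the left side about $\tfrac12$ while the right side is $O(m^{-1/2})$). But these ingredients only yield $O(R_\ell m^{-2\ell})$, and the passage to the exponent $2\ell+\tfrac12$ --- the entire content of Hal\'asz's theorem beyond the trivial Markov bound --- is not carried out. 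The ``Gaussian analysis near the peaks'' paragraph is a heuristic, not an argument: the superlevel set $\{|G(\chi)|>m/2\}$ need not decompose into finitely many small neighborhoods of nondegenerate maxima, the quadratic form $Q$ can be degenerate, and no quantitative relation between the number of peaks, its nonzero spectrum, and $R_\ell$ is established. Moreover, this is not how the cited proofs obtain the factor $m^{-1/2}$: the actual mechanism is a level-set/sumset argument --- writing $T_t=\{\chi:\sum_i\|\chi(a_i)\|^2\le t\}$, one has $\chi_1+\cdots+\chi_k\in T_{k^2t}$ whenever $\chi_1,\dots,\chi_k\in T_t$, and this is combined with a Kneser/Macbeath-type lower bound for the Haar measure of $k$-fold sumsets in the compact dual --- rather than any stationary-phase computation. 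As written, then, your proposal either collapses back to the citation (which is legitimate, and is what the paper does) or remains incomplete at its decisive step.
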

 \begin{proof}[Proof of Lemma~\ref{lem:unimodular_but_not_root_unity}]
Applying Proposition~\ref{propos:mixture-reduction} can represent the random variables $(\xi_j)_{0 \le j \le n}$ as $\xi_j  = I_j + \Delta_j \eps_j$ where $(I_j, \Delta_j)_{0 \le j \le n}$ are i.i.d.\ random vectors taking values in $\Z \times \N$ and $(\eps_j)_{0 \le j \le n}$'s are i.i.d.\ $\mathrm{Ber}(\tfrac12)$,  independent of $(I_j, \Delta_j)_{0 \le j \le n}$. Now by conditioning on $I_j$ and $\Delta_j$'s,  we have
\begin{align*}
\P \Big( \sum_{j=0}^n \xi_j \alpha^j = 0\Big)  \le \max_{x } \E \P \Big( \sum_{j=0}^n (I_j + \Delta_j \eps_j) \alpha^j = x \Big| (I_j, \Delta_j)_{0 \le j \le n}     \Big) \\
\le  \E  \max_{x }   \P \Big( \sum_{j=0}^n  \Delta_j \eps_j \alpha^j = x \Big| (\Delta_j)_{0 \le j \le n}     \Big)
\end{align*}
Where expectations are taken on the vector $(I_j, \Delta_j)_{0 \le j \le n}$.
Fix  an integer $q$ in the support of the random variable $\Delta_j$ and let $\eta:= \P(\Delta_j = q) >0$. Let $T$ denote the random set of indices defined by $T = \{ 0 \le j \le n: \Delta_j  = q\}$. Again, conditioning on $(\eps_j)_{ j \not \in T}$, write
\begin{align*}
\E  \max_{x }   \P \Big( \sum_{j=0}^n  \Delta_j \eps_j \alpha^j = x \Big| (\Delta_j)_{0 \le j \le n}     \Big) &\le \E  \max_{x }   \P \Big( \sum_{j \in T}  \Delta_j \eps_j \alpha^j = x \Big| T    \Big) \\
&\le  \E  \max_{x }   \P \Big( \sum_{j \in T}  \eps_j \alpha^j = x \Big| T    \Big).
\end{align*}
We are left with showing that for any deterministic set of indices $T \subseteq \{0,1, \ldots, n\}$,  we have $$\max_{x }  \P \Big( \sum_{j \in T}  \eps_j \alpha^j = x  \Big) = O(|T|^{-5/2}).$$
 Applying Hal\'asz's result (Lemma~\ref{lem:halasz}) with coefficients $(\alpha_j)_{j \in T}$ in $\C$ and $\ell=2$, we count the number of solutions of the equation
\begin{equation}\label{eq:h}
\alpha^{i} + \alpha^{j} = \alpha^{k} + \alpha^{l},
\end{equation}
 where $i, j, k, l$ are arbitrary indices in $T$. Taking the absolute value on the both sides of \eqref{eq:h} and using the fact that $|\alpha|=1$, we have
 $|1 + \alpha^{j-i}|  = |1 + \alpha^{l-k}|$, which implies that either $\alpha^{j-i} = \alpha^{l-k}$ or $\alpha^{j-i} = \alpha^{k-l}$, or equivalently $j-i = \pm(l-k)$. In case that $j-i = l-k$, we may write \eqref{eq:h} as  $\alpha^i (1 + \alpha^{j-i})  =  \alpha^k (1 + \alpha^{j-i})$, or equivalently as $(\alpha^i  - \alpha^k)(1 + \alpha^{j-i}) = 0$. Since $\alpha$ is not a root of unity, then $1+ \alpha^{j-1} \ne 0$. So, we deduce that $\alpha^i = \alpha^k$ which implies that $i = k$. Plugging it in back in $j-i = l-k$, we also have $j=l$. Similarly, for the case $j-i = k -l$, we end up with the equation $(\alpha^i  - \alpha^l)(1 + \alpha^{j-i}) = 0$, which, in turn, implies that $i = l$ and $j = k$. Hence, we conclude that $R_2 \le 2 |T|^2$ and the claim follows.

From the claim, we obtain that
\[ \E  \max_{x }   \P \Big( \sum_{j \in T}  \eps_j \alpha^j = x \Big| T    \Big)  \le \E \min \Big( 1, \frac{C}{|T|^{5/2}}\Big) = O(n^{-5/2}) + \P \Big( |T| \le \tfrac{\eta}{2} n \Big). \]
Note that $|T|$ has the distribution of a Binomial random variable with $n+1$ trials and success probability $\eta$.
From the standard result on the concentration of Binomial random variable, we know that there exists a constant $c>0$, depending on $\eta$, such that
$ \P \Big( |T| \le \tfrac{\eta}{2} n \Big) \le e^{-c(n+1)}$. This completes the proof of the lemma.
 \end{proof}
Next, we consider roots of unity.
  \begin{lemma}[roots of unity]
 \label{lem:roots_of_unity}
 Suppose Assumption~\ref{eq:coefficient_condition} holds. Then there exist constants $c, C>0$ such that if $\alpha$ satisfies $\alpha^k =1$ for some positive integer $k$, then
 \[ \P \Big( P'(\alpha) = 0\Big) \le \left( \frac{C}{\lfloor \tfrac{n}{k}\rfloor}\right)^{ \tfrac{3 \deg(\ga)}{2}}  + k \exp( - c \lfloor \tfrac{n}{k}\rfloor).\]
 \end{lemma}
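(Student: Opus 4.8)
The plan is to reduce the statement about $P'(\alpha)=0$ to an anti-concentration estimate for a sum of independent integer-valued random variables indexed by an arithmetic progression, and then apply Hal\'asz's inequality (Lemma~\ref{lem:halasz}) exactly $\deg(\alpha)$ times to harvest a factor of $m^{-3/2}$ per conjugate, where $m \approx \lfloor n/k\rfloor$ is the number of available "free" indices. First I would write $P'(\alpha) = \sum_{j=1}^n j\,\xi_j \alpha^{j-1}$. Since $\alpha^k=1$, the powers $\alpha^{j-1}$ only depend on $j \bmod k$; I would group indices into residue classes mod $k$ and keep just one well-chosen residue class (say the one of size at least $\lfloor n/k\rfloor$), conditioning away all other coefficients. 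This leaves a sum $\sum_{t} (a+tk)\,\xi_{a+tk}\,\alpha^{a-1}(\alpha^{0})$... i.e.\ a sum of the form $\sum_{t=0}^{m-1} c_t \xi_{a+tk}$ with $c_t = (a+tk)\alpha^{a-1}$ a fixed nonzero complex number times the integer $a+tk$, and $(\xi_{a+tk})_t$ i.i.d.\ from the coefficient distribution. Using Proposition~\ref{propos:mixture-reduction}, represent each $\xi_j = I_j + \Delta_j \eps_j$ with $\eps_j$ i.i.d.\ $\Ber(\tfrac12)$, condition on $(I_j,\Delta_j)$, then fix a value $q$ in the support of $\Delta_j$ and restrict to the (random, but with positive density $\eta$) subset of indices where $\Delta_j=q$; a binomial concentration bound gives the additive $k\exp(-c\lfloor n/k\rfloor)$ term and leaves us needing $\sup_x \P(\sum_{t\in T} (a+tk)\eps_t \alpha^{a-1} = x) = O(|T|^{-3\deg(\alpha)/2})$ for a deterministic set $T$ of size $\Theta(m)$.

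Next I would view the $\eps_t$-sum as living in the abelian group $G = \C$ (or, more efficiently, push it into $G = \Z[\alpha]$, the ring of integers/the $\Z$-module generated by $1,\alpha,\dots,\alpha^{\deg(\alpha)-1}$, which is a free $\Z$-module of rank $\deg(\alpha)$ and hence an infinite abelian group isomorphic to $\Z^{\deg(\alpha)}$). Apply Lemma~\ref{lem:halasz} with $\ell=1$ first to see the exponent, but to reach exponent $3\deg(\alpha)/2$ the right move is to iterate: split $T$ into $\deg(\alpha)$ disjoint blocks $T_1,\dots,T_{\deg(\alpha)}$ each of size $\Theta(m/\deg(\alpha))$, condition on all but one block at a time, and on each block apply Hal\'asz with $\ell=1$ to get $O(|T_i|^{-3/2} R_1)$ where $R_1$ counts solutions of $(a+sk)\alpha^{a-1} = (a+tk)\alpha^{a-1}$, $s,t\in T_i$, i.e.\ $R_1 = |T_i|$ since the coefficients $(a+tk)$ are distinct integers. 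Multiplying the $\deg(\alpha)$ conditionally independent bounds yields $O(m^{-3\deg(\alpha)/2})$ up to a constant depending on $\deg(\alpha)$ — which is bounded here since $\deg(\alpha)\le 4$ in the application, though the lemma as stated allows the constant to depend on $\deg(\alpha)$. (One must check the $R_1$ count genuinely stays linear; this uses that the map $t \mapsto (a+tk)\alpha^{a-1}$ is injective, which is clear.)

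The main obstacle I anticipate is making the group-theoretic bookkeeping in the Hal\'asz step honest: one needs the coefficients $c_t$ to generate a sufficiently "spread out" configuration in $G=\Z[\alpha]\cong\Z^{\deg(\alpha)}$ so that the solution count $R_1$ (or $R_\ell$) is controlled, and crucially one needs the exponent to scale with $\deg(\alpha)$ rather than being capped — this is where working in the rank-$\deg(\alpha)$ group $\Z[\alpha]$ rather than in $\C$ (rank $2$) is essential, and where the argument genuinely differs from the proof of Lemma~\ref{lem:unimodular_but_not_root_unity}. A secondary technical point is handling the factor $j$ in $P'(\alpha)=\sum j\xi_j\alpha^{j-1}$: because the integers $a+tk$ are all distinct and nonzero for $t\ge 1$ (and we can always discard $t=0$), they do not collapse the coefficients, but one should verify no accidental coincidences $(a+sk)\alpha^{a-1} = \pm(a+tk)\alpha^{a-1}$ arise — impossible for distinct nonnegative $s,t$. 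Once these points are settled, the two error terms in the statement assemble exactly as claimed: $(C/\lfloor n/k\rfloor)^{3\deg(\alpha)/2}$ from the iterated Hal\'asz bound and $k\exp(-c\lfloor n/k\rfloor)$ from the union bound over the $k$ residue classes together with the binomial concentration for each restricted index set.
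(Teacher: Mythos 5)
There is a genuine gap in the step that produces the exponent $\tfrac{3\deg(\alpha)}{2}$. You keep a \emph{single} residue class mod $k$, so after conditioning your sum is $\sum_t (a+tk)\,\eps_{a+tk}\,\alpha^{a-1}$, whose coefficients all lie in the rank-one $\Z$-module $\alpha^{a-1}\Z$. For such a sum the concentration probability can genuinely be as large as $\Theta(m^{-3/2})$ (take $a+tk$ ranging over an interval of integers), so no argument can extract a better exponent from one residue class; the rank-$\deg(\alpha)$ structure of $\Z[\alpha]$ that you correctly identify as essential is never actually engaged by your construction. Worse, the device you use to boost the exponent --- splitting the class into $\deg(\alpha)$ blocks, applying Hal\'asz to each, and ``multiplying the conditionally independent bounds'' --- is invalid: for independent summands $X_1,\dots,X_d$ one only has $\sup_x\P(\sum_i X_i=x)\le\min_i\sup_x\P(X_i=x)$, not the product (e.g.\ two independent $\Ber(\tfrac12)$ variables have sum concentrated at $\tfrac12$, not $\tfrac14$). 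Conditioning on all blocks but one gives the minimum over blocks, which is again only $O(m^{-3/2})$.

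The paper's proof repairs exactly this point. It keeps the $\deg(\alpha)$ residue classes $r=0,\dots,\deg(\alpha)-1$ (mod $k$) simultaneously, so that the $r$-th partial sum $S_r$ takes values in $\alpha^r\Z$. Because $1,\alpha,\dots,\alpha^{\deg(\alpha)-1}$ are linearly independent over $\Q$, the equation $\sum_r a_r\alpha^r=z$ has at most one integer solution, hence the event $\{\sum_r S_r=z\}$ forces the value of each $S_r$ individually, and \emph{only then} does independence legitimately turn the bound into a product $\prod_r\sup_m\P(S_r=m)$. Each factor is then handled by S\'ark\"ozi--Szemer\'edi (equivalently your Hal\'asz-with-$\ell=1$ computation) applied to the distinct integers $j$ in that class restricted to $\{\Delta_j=q\}$, giving $O(|T_r|^{-3/2})$ per class and the claimed exponent; the binomial tail for the $|T_r|$, summed over the at most $k$ classes, supplies the $k\exp(-c\lfloor n/k\rfloor)$ term. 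Your Bernoulli-mixture reduction, the restriction to $\Delta_j=q$, and the final bookkeeping are all fine; the missing idea is that the product over conjugate directions must come from rational independence of the powers of $\alpha$, not from iterated conditioning within one direction.
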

 The proof of the above lemma is very similar to that of Lemma~1.4 in \cite{PSZ} where the similar bound holds without the additional $k \exp( - c \lfloor \tfrac{n}{k}\rfloor)$ term for any non-constant coefficient distribution supported on $\{-1, 0, 1\}$. However, for the sake of completeness we include here a proof of Lemma~\ref{lem:roots_of_unity}. The proof of Lemma~\ref{lem:roots_of_unity} relies heavily on the following classical anti-concentration bound of S\'ark\"ozi and
Szemer\'edi \cite{SS65}.
\begin{theorem}[S\'ark\"ozi and
Szemer\'edi]\label{thm:Sarkozy_Szemeredi}
  Let $(\eps_j)_{1 \le j \le N}$ be i.i.d.\ $\mathrm{Ber}(\tfrac12)$ random variables. There exists
  a constant $C>0$ such that for any \emph{distinct} integers $(a_j)$, $1\le j\le N$, we have
  \begin{equation*}
    \max_{m\in\Z} \P\left(\sum_{j=1}^N \eps_j a_j = m\right) \le
    \frac{C}{N^{3/2}}.
  \end{equation*}
\end{theorem}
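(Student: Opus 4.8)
The quickest route is to invoke Lemma~\ref{lem:halasz} with $\ell=1$: since the $a_j$ are distinct, the only solutions of $a_{i_1}=a_{j_1}$ are the diagonal ones, so $R_1=N$, and the lemma yields $\sup_x\P\big(\sum_{j=1}^N a_j\eps_j=x\big)=O(N^{-2-1/2}R_1)=O(N^{-3/2})$, as claimed. This merely relocates the difficulty into Hal\'asz's inequality, however, so for a self-contained argument I would use Fourier inversion followed by a number-theoretic estimate, as follows. The bound is trivial for bounded $N$, so assume $N$ large; at most one $a_j$ equals $0$ and is irrelevant, so assume all $a_j\ne0$. Since $X:=\sum_{j=1}^N\eps_j a_j$ is integer-valued, Fourier inversion gives
\[
\max_{m\in\Z}\P(X=m)\le\frac{1}{2\pi}\int_{-\pi}^{\pi}\big|\E e^{itX}\big|\,dt=\frac{1}{2\pi}\int_{-\pi}^{\pi}\prod_{j=1}^N\big|\cos(a_jt/2)\big|\,dt ,
\]
and after the substitution $t=2\pi s$ and using that the integrand has period $1$ and is even, the right side equals $\int_0^1\prod_{j=1}^N|\cos(\pi a_j s)|\,ds$. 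Writing $\|x\|$ for the distance from $x\in\R$ to $\Z$, we have $\|a_js\|=\big\||a_j|s\big\|$, and among $N$ distinct nonzero integers at least $N/2$ distinct absolute values occur, so it suffices to prove that for distinct positive integers $b_1<\cdots<b_K$,
\[
\int_0^1\prod_{k=1}^K|\cos(\pi b_k s)|\,ds\le C\,K^{-3/2}.
\]

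To prove this I would note that by periodicity the integral equals $\int_{-1/2}^{1/2}\prod_k|\cos(\pi b_k s)|\,ds$, use the elementary inequality $|\cos(\pi x)|\le e^{-c_0\|x\|^2}$ (absolute $c_0>0$) to bound it by $\int_{-1/2}^{1/2}e^{-c_0 g(s)}\,ds$ with $g(s):=\sum_{k=1}^K\|b_k s\|^2$, and then estimate the measure of the sublevel sets of $g$. The elementary main term comes from a neighbourhood of $s=0$: for $|s|\le\tfrac1{2b_K}$ every $b_k$ satisfies $b_k|s|\le\tfrac12$, so $\|b_k s\|=b_k|s|$, whence $g(s)=|s|^2\sum_k b_k^2\ge c\,|s|^2 K^3$ using only $b_k\ge k$; integrating $e^{-c_0 c\, s^2 K^3}$ over $|s|\le\tfrac1{2b_K}$ is a Gaussian integral of size $O(K^{-3/2})$, which is the claimed order (and is sharp, as $b_k=k$ shows). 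What then remains is to show the contribution of $\tfrac1{2b_K}<|s|\le\tfrac12$ is $o(K^{-3/2})$, and this is where the real work lies: one shows $g(s)\gtrsim K$ unless $s$ lies abnormally close to a rational $p/q$ of small denominator, in which case, reducing the $b_k$ modulo $q$ (those $\equiv0$ reappearing, rescaled by $1/q$, in a perturbation term, with distinctness preserved) sets up a recursion that again leaves only exponentially small or lower-order contributions. Summing over all scales and all such rationals yields the bound.

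I expect the assertion ``$g(s)=\sum_k\|b_k s\|^2\gtrsim K$ for $s$ not very close to a small-denominator rational'', together with the bookkeeping confining the exceptional $s$ to short arcs contributing below $K^{-3/2}$, to be the main obstacle. This is exactly where distinctness of the $a_j$ is indispensable: for a general multiset (say all $a_j$ equal) $g$ can be uniformly $O(1)$ on a set of measure $\Omega(1)$ and the conclusion fails, so any proof must use distinctness here; making the bound quantitative and uniform — in particular when the $b_k$ are large and irregularly spaced, where one first divides out $\gcd(b_1,\dots,b_K)$ and argues scale by scale — is the technical heart. The other ingredients (the Fourier step, the cosine/Gaussian inequality, the inequality $\sum_k b_k^2\ge\sum_k k^2$, and the Gaussian integral producing the main term) are routine.
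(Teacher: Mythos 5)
The paper does not prove this statement at all: it is quoted as a classical black box from S\'ark\"ozi--Szemer\'edi \cite{SS65}, so any proof you give is necessarily a different route. Your first argument is correct and complete within the paper's framework: applying the quoted Lemma~\ref{lem:halasz} with $\ell=1$, distinctness gives $R_1=N$ (only the diagonal solutions of $a_{i_1}=a_{j_1}$), hence $\sup_x\P\big(\sum_j a_j\eps_j=x\big)=O(N^{-5/2}\cdot N)=O(N^{-3/2})$. Two small caveats: Hal\'asz-type statements require the $a_j$ to be nonzero, so you should discard the at most one vanishing $a_j$ (as you do in the Fourier part, but not here), and the derivation is of course not independent of Hal\'asz --- historically \cite{SS65} precedes and is subsumed by \cite{H77} --- but since the paper itself takes Lemma~\ref{lem:halasz} on faith, this is a legitimate and clean way to obtain Theorem~\ref{thm:Sarkozy_Szemeredi}; indeed the paper's own use of Lemma~\ref{lem:halasz} with $\ell=2$ in Lemma~\ref{lem:unimodular_but_not_root_unity} is the same manoeuvre.

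Your second, self-contained Fourier argument is only a sketch and does not stand on its own as written. The reduction steps and the main term are fine: Fourier inversion, the bound $|\cos(\pi x)|\le e^{-c_0\|x\|^2}$, passing to $K\ge N/2$ distinct positive $b_k$, and the Gaussian estimate near $s=0$ using $\sum_k b_k^2\ge cK^3$ correctly produce the sharp order $K^{-3/2}$. But the entire content of the theorem is the claim that the complementary range $\tfrac{1}{2b_K}<|s|\le\tfrac12$ contributes $o(K^{-3/2})$; your proposal only asserts that $g(s)=\sum_k\|b_k s\|^2\gtrsim K$ away from small-denominator rationals and gestures at a recursion over such rationals, without proving the dichotomy, controlling the measure of the exceptional arcs at each scale, or carrying out the summation. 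You identify this honestly as the main obstacle and correctly locate where distinctness is indispensable, but as it stands this part is a plan, not a proof. Since the Hal\'asz route already suffices for the paper's purposes, the proposal as a whole is acceptable, with the self-contained argument understood as incomplete.
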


 \begin{proof}[Proof of Lemma~\ref{lem:roots_of_unity}] Since $\xi_j$ is a mixture of Bernoulli distribution, we can proceed along the same way as in the proof of Lemma~\ref{lem:unimodular_but_not_root_unity} to obtain
 \begin{equation}\label{eq:P_derivative1}
 \P \Big( P'(\alpha) = 0\Big) \le  \E  \max_{x }   \P \Big( \sum_{j =1}^n  \eps_j  j  \Delta_j \alpha^{j-1} = x \Big| (\Delta_j)_{1 \le j \le n}    \Big),
 \end{equation}
 where $(\eps_j)_{1 \le j \le n}$ are i.i.d.\  $\mathrm{Ber}(\tfrac12)$ and   $(\Delta_j)_{1 \le j \le n}$ are i.i.d.\  on $\N$.

 Observe that necessarily $\deg(\alpha)\le k$.  Set
\begin{align*}
  J&:=\{j\colon 1\le j\le n\text{ and }0\le (j -1) \bmod k\le
  \deg(\alpha) -1\},\\
  \bar{J}&:=\{1,\ldots, n\}\setminus J.
\end{align*}
Conditionally on $(\Delta_j)_{1 \le j \le n}$,  define the random variables $(S_r)$, $0\le r\le\deg(\alpha) - 1$, by
\begin{equation*}
  S_r := \sum_{
  j-1\bmod k = r} \eps_j j \Delta_j \alpha^{j-1} = \alpha^r \sum_{
  j - 1\bmod k = r} \eps_j j  \Delta_j
\end{equation*}
and
\begin{equation*}
  \bar{S} := \sum_{j\in \bar{J}} \eps_j  j \Delta_j
  \alpha^{j-1}.
\end{equation*}
Observe that
\begin{equation*}
\sum_{j =1}^n  \eps_j  j  \Delta_j \alpha^{j-1}=\sum_{j=1}^n \xi_j j \alpha^{j-1} =  \sum_{r=0}^{\deg(\alpha)-1} S_r + \bar{S}.
\end{equation*}
Now, conditionally on $(\Delta_j)_{1 \le j \le n}$,  $S_0, S_1, \ldots S_{\deg(\alpha)-1}$ and $\bar S$ are
independent. In addition,
$(\alpha^r)$, $0\le r\le \deg(\alpha)-1$, are linearly independent
over the rational numbers, and therefore the equation
$\sum_{i=0}^{\deg(\alpha)-1} a_i \alpha^i=z$
has at most one integral solution $(a_0,\ldots,a_{\deg(\alpha)-1})$
for
a given
$z\in \C$. Hence, for any given values of $(\Delta_j)_{1 \le j \le n}$ and any given $x \in \C$,
\begin{align}\label{eq:P_derivative2}
  \P\Big(\sum_{j =1}^n  \eps_j  j  \Delta_j \alpha^{j-1} = x \Big) &= \E_{\bar S} \P\left(\sum_{r=0}^{\deg(\alpha)-1} S_r= x -\bar{S}\,\Big|\,\bar{S}\right)
  \le \max_{z\in\C} \P\left(\sum_{r=0}^{\deg(\alpha)-1} S_r = z\right) \notag \\
  &= \prod_{r=0}^{\deg(\alpha)-1}\max_{z\in\C} \P(S_r = z) =
  \prod_{r=0}^{\deg(\alpha)-1}\max_{m\in\Z} \P\Bigg(\sum_{
  j-1\bmod k = r} \eps_j  j \Delta_j = m\Bigg).
\end{align}
Let $q$ be a point in the support of $\Delta_j$ and let $\eta:= \P(\Delta_j = q)>0$. Define for $0 \le r \le \deg(\alpha)-1$, $T_r:= \{ 1 \le j \le n: \Delta_j = q  \text{ and }  j-1\bmod k = r\}$. Then
\begin{equation}\label{eq:P_derivative3}
 \max_{m\in\Z} \P\Bigg(\sum_{
  j-1\bmod k = r} \eps_j  j \Delta_j = m\Bigg) \le  \max_{m'\in\mathbb{Z}} \P\Bigg(\sum_{j\in T_r} \eps_j  j = m'\Bigg) \le C |T_r|^{-3/2},
  \end{equation}
 where in the last step we apply the S\'ark\"ozi-Szemer\'edi bound (Theorem~\ref{thm:Sarkozy_Szemeredi}).  Combining \eqref{eq:P_derivative1}, \eqref{eq:P_derivative2} and \eqref{eq:P_derivative3}, we finally arrive at the inequality,
\begin{equation}\label{eq:P_derivative4}
  \P \Big( P'(\alpha) = 0\Big) \le \E \prod_{r=0}^{\deg(\alpha)-1} \min (1, C |T_r|^{-3/2}) \le \left( \frac{C}{\tfrac{\eta}{2} \lfloor \tfrac{n}{k}\rfloor}\right)^{ \tfrac{3 \deg(\ga)}{2}} +  \sum_{r=0}^{\deg(\alpha)-1} \P \Big( |T_r| \le \tfrac{\eta}{2} \lfloor \tfrac{n}{k}\rfloor \Big).
  \end{equation}
Observe that $T_r$ is Binomial random variable with  number of trials at least $ \lfloor \tfrac{n}{k}\rfloor $ and success probability $\eta$. This gives us the following bound for the left tail of $T_r$. There exists a constant $c>0$ such that  $\P \big( |T_r| \le \tfrac{\eta}{2} \lfloor \tfrac{n}{k}\rfloor \big) \le e^{-c\lfloor \tfrac{n}{k}\rfloor} $. The lemma now follows from \eqref{eq:P_derivative4} and the fact that $\deg(\ga) \le k$.
 \end{proof}
 It remains to show  \eqref{eq:low_deg_double_root_on_circle}.  Let $\ga \in \mathbb{A}$ such that $|\ga| =1, \ga \not \in \{-1, 1\}$ and $\deg(\alpha) \le 4$. First assume that $\alpha$ is a primitive  $k^{th}$ root of unity, that is, $\alpha^k = 1$ and $\alpha^l \ne 1$ for all positive integer $l < k$. Recall that $\deg(\ga) =\varphi(k)$  where $\varphi$
is Euler's totient function, i.e., $\varphi(k)=|\{1\le j\le k\colon
\gcd(j,k) = 1\}|$ (see, for example, Lemma~7.6 and Theorem~7.7 of \cite{M96}).
 By standard estimates (see \cite[Theorem 2.9]{MV07}) there exists
some constant $c_1>0$ for which
\begin{equation*}
\varphi(k) \ge \frac{c_1k}{\log\log (k+2)}.
\end{equation*}
The above bound along with the fact that  $\deg(\alpha) \le 4$ implies the bound $k \le C_2$ for some absolute constant $C_2$.  On the other hand, since $\alpha \ne \pm 1$, we have $\deg(\ga) \ge 2$. Thus, by Lemma~\ref{lem:roots_of_unity}, we deduce that $ \P(\ga \text{ is a double root  of } P) = O(n^{-3})$.

Now assume that $\alpha$ is not a root of unity. As a direct consequence of Lemma~\ref{lem:unimodular_but_not_root_unity}, we also have that $ \P(\ga \text{ is a double root  of } P) = O(n^{-5/2})$. This finishes the proof of the bound  \eqref{eq:low_deg_double_root_on_circle} and hence the proof of Lemma~\ref{lem: small deg small house}.

\section{Open Problems}
We conclude the paper with a couple of  open problems.
\begin{enumerate}

\item Define $p_{n+1}: = \max_{a \in \Z} \P \Big ( P_n(2)  = a \Big)$. Then
\begin{align*}
 p_{n+m}  = \max_{a \in \Z} \P \Big ( P_{n+m-1} (2)  = a \Big)  \ge  \max_{a \in \Z} \P \Big ( \sum_{j=0}^{m-1} \xi_j 2^j  = a \Big)   \max_{a \in \Z} \P \Big ( \sum_{j=m}^{m+n -1} \xi_j 2^j  = a \Big)  = p_mp_n.
 \end{align*}
 So, it follows from the subadditive property that there exists some $\lambda > 0$, depending on the law of $\xi_0$, such that $p_n  = e^{ - \lambda n (1+o(1))}$. However, the exact value of $\lambda$ is completely unknown.  In the special case when the maximum of atom of $\xi_0$ is at most $\tfrac12$, our Theorem~\ref{thm: main} only gives a one-sided bound $\lambda > \tfrac12 \log 2$ . It would be very interesting to investigate the dependence of value of $\lambda$ on the law of $\xi_0$ or, say, on the maximum atom of $\xi_0$.

 \item  It would be very interesting to investigate the minimum gap between the roots of a random polynomial.  Note that this problem makes sense even if the coefficient distribution is continuous. To best of our knowledge,  precise quantitive bounds on the minimum gap are not available even for the i.i.d.\  Gaussian polynomials.
 \end{enumerate}

\section*{Acknowledgments}
The authors are very grateful to Ron Peled for numerous helpful conversations and, in particular, for pointing out the reference \cite{Du}. The authors also thank Ofer Zeitouni for insightful comments.

\end{document}